\newtheorem{theorem}{Theorem}[section]
\newtheorem{lemma}[theorem]{Lemma}
\newtheorem{col}{Corollary}[section]
\theoremstyle{definition}
\newtheorem{definition}[theorem]{Definition}
\theoremstyle{remark}
\newtheorem{remark}[theorem]{Remark}
\numberwithin{equation}{section}
\begin{document}

\title[Bernstein spaces, sampling, Boas interpolation formulas ]{Notes  on Bernstein spaces, sampling,   Boas interpolation formulas and their extensions to Banach spaces}

\author{Isaac Z. Pesenson}
\address{Department of Mathematics, Temple University,
 Philadelphia,
PA 19122}

\email{pesenson@temple.edu}

\keywords{ Bernstein spaces, sampling,   Boas interpolation formulas, one-parameter groups, Discrete Hilbert transform}

\begin{abstract}

This paper is essentially a survey on several classical  results of harmonic analysis and their recent extensions to Banach spaces. The first part of the paper  is a summary of some important results  in such topics as  Bernstein spaces, Shannon-type sampling, Riesz and Boas interpolation formulas.  The second  part  contains   extensions of these ideas to   Banach spaces equipped with one-parameter uniformly bounded group of operators of class $C_{0}$. 
 
\end{abstract}

\maketitle

\tableofcontents

\section{Introduction}

This paper contains a  survey on several classical  results related to analysis in Bernstein spaces along with extensions of them to abstract settings. The first part of the paper  is a summary of some basic results  in such topics as  Bernstein spaces, Shannon-type sampling, Riesz and Boas interpolation formulas. The corresponding  sections   \ref{Bern-spaces}-\ref{Higher order 3} are significantly influenced by the books \cite{Akh, B2, BN, Nik}, and especially by the papers written by P. Butzer and his collaborators \cite{BSS-1}-\cite{BSS-3}.
The second  part, which consists of sections  \ref{Bernstein-vectors}-\ref{H-Boas formulas},    contains   extensions of these classical   ideas to   Banach spaces equipped with one-parameter uniformly bounded groups of operators. The corresponding results  were published in   \cite{KPes}, \cite{Pes88}--\cite{Pes23}. 

The paper is organized as follows. In section \ref{Bern-spaces} we introduce distributional Fourier transform $\mathcal{F}$, its inverse $\mathcal{F}^{-1}$, and define Bernstein spaces $B_{\sigma}^{p}, \>\>\sigma>0,$  as such which comprised of all $f \in L_{p}(\mathbb{R})\cap C^{\infty}(\mathbb{R}),\>\>1\leq p\leq \infty, $ whose distributional Fourier transform has support (in the sense of distributions) in  $[-\sigma, \>\sigma]$. 
 In this section we also introduce the Poisson summation formula and the Riesz Interpolation formula for trigonometric polynomials. We also provide a proof of the    Boas interpolation formula for functions in $B_{\sigma}^{p}, \>\>1\leq p\leq \infty, \>\sigma>0$.  As a consequence of this formula one obtains the Bernstein inequality for functions in Bernstein spaces.
 In section \ref{more-prop}, exploring the Bernstein inequality, we show that for a function in a Bernstein space its $L_{p}$-norm controls a discrete norm with equally spaces nodes
\begin{equation}\label{PP-0}
\left(h\sum_{k}|f(x_{k})|^{p}\right)^{1/p}\leq (1+h\sigma)\|f\|_{p},
\end{equation}
 for $f\in B_{\sigma}^{p},\>1\leq p\leq \infty,\>\sigma>0,\>x_{k}=hk, \>k\in \mathbb{Z}, h>0.$ 
  In turn, this fact implies the following  important inequality    
\begin{equation}\label{Nik-0}
\|f\|_{p}\leq \sup_{x\in \mathbb{R} }
\left(h\sum_{k} |f(x- x_{k})|^{p}\right)^{1/p}\leq (1+h\sigma)\|f\|_{p},
\end{equation}
where the same notations as in (\ref{PP-0}) are used.
  This inequality is used to establish the following continuous embeddings
 $$
B_{\sigma}^{1}\subset B_{\sigma}^{p}\subset B_{\sigma}^{q}\subset B_{\sigma}^{\infty},\>\>1\leq p\leq q\leq \infty.
$$
 Section \ref{sampling-theor} contains several Shannon-type sampling theorems. The proofs of all of them essentially rely on the inequality (\ref{PP-0}). The higher order interpolation formulas which generalize the Boas formula are collected in sections \ref{Higher order 2} and  \ref{Higher order 3}. More material relevant to the topics discussed in sections  \ref{Bern-spaces}-\ref{Higher order 3}  can be found, for example, in  \cite{And}-\cite{ Bard2}, \cite{Ganz}, \cite{Gro},  \cite{Mon1, Mon2}, \cite{Schm}, \cite{Stein}-\cite{ AZ}.

 In  section \ref{Bernstein-vectors} we introduce  Bernstein vectors in Banach spaces which are equipped with one-parameter uniformly bounded group of operators of  class $C_{0}$. Namely, if $T(t),\> t\in \mathbb{R}$, is  such a group whose infinitesimal operator is $D$, then  the generalized Bernstein space ${\bf B}_{\sigma}(D),\>\sigma>0$, is a set of all vectors $f$ for which the Bernstein inequalities hold true:
 \begin{equation}
\|D^{k}f\|\leq \sigma^{k}\|f\|,\>\>k\in \mathbb{N}.
\end{equation}
 It is very important that we can show that the set $\bigcup_{\sigma>0}{\bf B}_{\sigma}(D)$ is dense in the Banach space. Moreover, we illustrate that this property does not hold for a general one-parameter semigroup of operators. In this section we also give other equivalent descriptions of the abstract Bernstein spaces and discuss the Bernstein vectors in Hilbert spaces. We also prove a Landau-Kolmogorov-Stein-type inequality associated with the group of operators $T(t)$.
 
 Section \ref{sampling in Banach spaces} contains sampling theorems for trajectories $T(t)f,\>t\in \mathbb{R}$, where $f\in {\bf B}_{\sigma}(D)$. These theorems provide some explicit formulas for reconstruction of entire  trajectories $T(t)f, \>f\in {\bf B}_{\sigma}(D), \>t\in \mathbb{R},$ from an infinite set of samples $T(t_{k})f,\>k\in \mathbb{Z},$ taken from the set 
 $\{T(t)f\}_{t\in \mathbb{R}}$.

Note, that if $T(t)$ is a group of operators than every trajectory $T(t)f$ is completely determined by any single sample $T(t_{0})f,$ because for any $t\in \mathbb{R},$
$$
T(t)f=T(t-t_{0})\left(T(t_{0})f\right).
$$

Our results  have,  however, a different  nature. They represent a trajectory $T(t)f$ as a "linear combination" of a countable number of its samples. Results of this kind can be useful when  the members of the  group  $T(t)$ are  unknown and only samples $T(t_{k})f, \>k\in \mathbb{Z},$ of a trajectory  $T(t)f$ are given.

It seems to be very interesting, that no matter how complicated one-parameter group can be (think, for example, about a Schr\"{o}dinger operator $D=-\Delta+V(x)$ and the corresponding group $e^{itD}$ in $L_{2}(\mathbb{R}^{d})$) the formulas in  section       \ref{sampling in Banach spaces}         are universal in the sense that the  coefficients and the  sets of sampling points are independent on the group.

Section \ref{RB in Banach} is about higher order Boas-type formulas for one-parameter groups of operators.
 Sections \ref{Analysis with HT}, \ref {H-sampling}, \ref{H-Boas formulas}, apply the developed abstract results to the case of the Discrete Hilbert Transform in $l^{2}$.

\section{Bernstein spaces}\label{Bern-spaces}

 \subsection{Bernstein spaces}

For a Schwartz function $\varphi\in S(\mathbb{R})$ its Fourier transform is defined as
$$
\mathcal{F}\varphi(\xi)=\frac{1}{\sqrt{2\pi}}\int_{\mathbb{R}}\varphi(x)e^{-i x\xi}dx,
$$ 
and the inverse Fourier transform is defined as
$$
\mathcal{F}^{-1}\varphi(x)=\frac{1}{\sqrt{2\pi}}\int_{\mathbb{R}}\varphi(\xi)e^{i x\xi}d\xi.
$$
These definitions are extended to distributions $f\in S^{'}(\mathbb{R})$ by the formulas
$$
\langle\mathcal{F}f, \varphi\rangle =\langle f, \mathcal{F}\varphi \rangle,\>\>\> \langle\mathcal{F}^{-1}f, \varphi\rangle =\langle f, \mathcal{F}^{-1}\varphi \rangle,\>\>\>\varphi\in S(\mathbb{R}),
$$
where for $f\in S^{'}(\mathbb{R}),\>\>\varphi\in S(\mathbb{R})$, 
$$
\langle f, \varphi\rangle =\int_{\mathbb{R}} f(x)\varphi(x)dx.
$$
The Paley-Wiener Theorem says that a function $f\in L^{p}(\mathbb{R}),\>\>1\leq p\leq \infty, $ belongs to
$B^p_\sigma$ if and only if its distributional Fourier transform $\mathcal{F}f$
has  support $[-\sigma,\sigma]$ in the sense of distributions.
The space $B^2_\sigma$ is called the Paley-Wiener space and is
denoted by $PW_\sigma .$  
 \subsection{Poisson summation formula}

Let $L_{\lambda}^{p}(\mathbb{R})$ be a space of $\lambda$- periodic functions on $\mathbb{R}$ with the norm
$$
\|f\|_{L_{\lambda}^{p}}=\left(\frac{1}{\lambda}\int_{\lambda/2}^{-\lambda/2}|f(t)|^{p}dt\right)^{1/p}.
$$
With every  $f\in L_{\lambda}^{1}$, one can associate the Fourier series 
$$
 \sum_{k\in \mathbb{Z}}c_{\lambda}^{k}(f)e^{i2k\pi/\lambda}
$$
where
$$
c_{\lambda}^{k}(f)=\frac{1}{\lambda}\int_{-\lambda/2}^{\lambda/2}f(t) e^{-i2k\pi/\lambda}dt.
$$
The generalized Parseval formula for $f_{1}, f_{2}\in L_{\lambda}^{2}(\mathbb{R})$  has the following form
\begin{equation}
\frac{1}{\lambda}\int f_{1}(t)\overline{f_{2}(t)}dt=\sum_{k}c_{\lambda}^{k}(f_{1})\overline{c_{\lambda}^{k}}(f_{2}).
\end{equation}
For a sufficiently fast decaying function   $f$ its periodization $ f^{*}=(\lambda/\sqrt{2\pi})\sum f(\cdot +\lambda k)$  belongs to $L_{\lambda}^{1}(\mathbb{R})$ and the corresponding Fourier series is given by
$$
\sum_{k}\left(\mathcal{F}f\right) \left(2k\pi/\lambda\right)e^{i2k\pi t/\lambda}.
$$
If $f\in L^{1}(\mathbb{R})$ is absolutely continuous and $f^{'}\in L^{1}(\mathbb{R})$  then one 
 has actual equality
 \begin{equation}\label{Poisson}
 (\lambda/\sqrt{2\pi})\sum_{k\in \mathbb{Z}} f(t +\lambda k)=\sum_{k\in \mathbb{Z}}(\mathcal{F}f) \left(2k\pi/\lambda\right)e^{i2k\pi t/\lambda},
 \end{equation}
 which is known as the Poisson summation formula.

\subsection{Classical Riesz and Boas interpolation formulas}

 A trigonometric polynomial of order $N$  is a function of the form 
\begin{equation}\label{trig}
 P_{N}(x)=\sum_{k=-N}^{N}c_{k}e^{ikx},\>\>\>\>x\in \mathbb{R},
 \end{equation}
 which is $2\pi$-periodic and it 
can be treated as a function in $B_{N}^{\infty}$. In this case, due to periodicity there exists a finite interpolation formula
 \begin{equation}\label{R1}
P_{N}^{'}(x)=\frac{1}{4N}\sum_{k=1}^{2N}(-1)^{k+1}\frac{1}{\sin^{2}\frac{x_{k}}{2}}P_{N}(x+x_{k}),\>\>\>x_{k}=\frac{2k-1}{2N}\pi,
 \end{equation}
 which was discovered by Riesz in \cite{R1, R2}. 
For $P_{N}(x)=\sin N x$  one obtains 
 $$
 N=\frac{1}{4N}\sum_{k=1}^{2N}\frac{1}{\sin^{2}\frac{x_{k}}{2}},
 $$
 and (\ref{R1}) implies the Bernstein inequality for
  trigonometric polynomials
  $$
  \sup_{x\in (-\pi, \pi)} |P_{N}^{'}(x)|\leq N  \sup_{x\in (-\pi, \pi)}|P_{N}(x)|.
  $$
In \cite{B1} Boas published what is known today as the Boas interpolation formula for functions in $B_{\sigma}^{p},\>\>1\leq p\leq\infty,$
\begin{equation}\label{Boas11}
\frac{(-1)^{k-1}}{(k-1/2)^2} f\left(
x+\frac{\pi}{\sigma}(k-1/2)\right),
\end{equation}
where convergence of the series holds in the norm of $L^{p}(\mathbb{R})$. The idea of the original proof of this formula 
is very transparent and we replicate it now.

On the interval $(-\sigma, \sigma)$ we consider the function $i\xi e^{i\pi \xi/2\sigma}$ and its Fourier series
$$
i\xi e^{i\pi \xi/2\sigma}=\sum_{k}c_{k}^{\sigma}e^{\left(i\pi k\xi/\sigma\right)},
$$
where
$$
c_{k}^{\sigma}=\frac{i}{2\sigma}\int_{-\sigma}^{\sigma}\tau e^{i \left(\frac{1}{2}-k\right)\pi \tau/\sigma}d\tau=
$$
$$
-\frac{1}{\sigma}\int_{0}^{\sigma}\tau \sin\left(\left(k-1/2\right)\frac{\pi \tau}{\sigma} \right)d\tau=\frac{\sigma(-1)^{k-1}}{\pi^{2}\left(k-1/2\right)^{2}}.
$$
From here one obtains
\begin{equation}\label{it}
i\xi=\frac{4\sigma}{\pi^{2}}\sum_{k\in \mathbb{Z}}
\frac{(-1)^{k}}{(2k+1)^{2}}e^{i\pi (2k+1)\xi/2\sigma},  \>\>\>\xi\in (-\sigma, \sigma),
\end{equation}
where the series converges uniformly on  $(-\sigma, \sigma)$. 
If $f\in B_{\sigma}^{p}$ then 
$\mathcal{F}f $ is supported on $(-\sigma, \sigma)$ and substitution of (\ref{it}) into the formula
$$
f^{'}(x)=\mathcal{F}^{-1}\left(i\xi \mathcal{F}f(\xi)\right),
$$
gives (\ref{Boas11}).

\subsection{Bernstein inequality}

When $f(x)=\sin x\in B_{1}^{\infty}$ and $x=0,$ the formula (\ref{Boas11}) implies  the following identity
\begin{equation}\label{identity}
\sum_{k\in \mathbb{Z}}\frac{1}{(k-1/2)^2}=\pi^2.
\end{equation}
The formula (\ref{Boas11}) shows that if $f\in B_{\sigma}^{p}$ then its derivative is also in $B_{\sigma}^{p}$. Moreover, 
formulas (\ref{Boas11}) and  (\ref{identity}) along with the fact that 
 the  norm of $L_{p}(\mathbb{R})$ is invariant with respect to translations 
 provide proof of the  famous Bernstein inequality for functions in $B_{\sigma}^{p},\>1\leq p\leq \infty,\>\sigma>0,$ 
\begin{equation}\label{Bern-1}
\left\|f^{'}\right\|_p\leq \sigma \left\|f\right\|_p, 1\leq p\leq \infty, \> \sigma>0 .
\end{equation}
Uniform convergence  of the series in (\ref{Boas11}) allows differentiate both sides of it to get 
$$
f^{''}(x)=\frac{\sigma}{\pi^2}\sum_{k\in \mathbb{Z}}
\frac{(-1)^{k-1}}{(k-1/2)^2} f^{'}\left(
x+\frac{\pi}{\sigma}(k-1/2)\right),
$$
which leads to 
$$
\left\|f^{''}\right\|_p\leq \sigma^{2} \left\|f\right\|_p, \>\>\>1\leq p\leq \infty, \> \sigma>0.
$$
Continue this way we are coming to the Bernstein inequality for any $m\in \mathbb{N}$
\begin{equation}
\left\|f^{(m)}\right\|_p\leq \sigma \left\|f\right\|_p, \>\>\>1\leq p\leq \infty, \> \sigma>0.
\end{equation}

\section{More properties of functions in Bernstein spaces}\label{more-prop}

The following statement is a part of the famous Plancherel-Polya frame inequalities.
\begin{lemma}\label{PP}
$f\in B_{\sigma}^{p},\>1\leq p\leq \infty,\>h>0,\>x_{k}=kh,\>k\in \mathbb{Z},$ then
\begin{equation}\label{Nik}
\sup_{x\in \mathbb{R} }
\left(h\sum_{k} |f(x_{k})|^{p}\right)^{1/p}\leq (1+h\sigma)\|f\|_{p}.
\end{equation}

\end{lemma}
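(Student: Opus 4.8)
The plan is to derive (\ref{Nik}) from the Bernstein inequality (\ref{Bern-1}) by means of an elementary pointwise estimate coming from the fundamental theorem of calculus; this is the ``easy half'' of the Plancherel--P\'olya inequalities. Since $B_\sigma^p$ is translation invariant and the $L^p$-norm is translation invariant, I would first reduce to proving, for arbitrary $f\in B_\sigma^p$,
\[
h\sum_{k\in\mathbb{Z}}|f(kh)|^p\le (1+h\sigma)^p\,\|f\|_p^p,
\]
the version with the supremum over translates then following by applying this bound to $f(\cdot+x)$. For $p=\infty$ the left side is read as $\sup_k|f(kh)|$ and the inequality is immediate from $|f(kh)|\le\|f\|_\infty$, so one may assume $1\le p<\infty$.

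Next, fix $k$ and set $I_k=[kh,(k+1)h]$. Using $f\in C^\infty(\mathbb{R})$ one has $f(kh)=f(t)-\int_{kh}^{t}f'(s)\,ds$ for every $t\in I_k$; averaging this identity over $I_k$ and estimating $\int_{kh}^{t}|f'|\le\int_{I_k}|f'|$ gives
\[
|f(kh)|\le\frac1h\int_{I_k}\bigl(|f(t)|+h|f'(t)|\bigr)\,dt .
\]
The $L^1$-average on the right is at most the $L^p$-average over $I_k$ (Jensen's inequality, equivalently H\"older applied to the probability measure $h^{-1}dt$ on $I_k$), so $h|f(kh)|^p\le\int_{I_k}\bigl(|f(t)|+h|f'(t)|\bigr)^p\,dt$. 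Summing over $k\in\mathbb{Z}$ and using that the intervals $I_k$ tile $\mathbb{R}$,
\[
h\sum_{k\in\mathbb{Z}}|f(kh)|^p\le\int_{\mathbb{R}}\bigl(|f(t)|+h|f'(t)|\bigr)^p\,dt=\bigl\||f|+h|f'|\bigr\|_p^p\le\bigl(\|f\|_p+h\|f'\|_p\bigr)^p
\]
by the triangle inequality in $L^p$. Finally I would invoke (\ref{Bern-1}): $f\in B_\sigma^p$ gives $f'\in B_\sigma^p$ with $\|f'\|_p\le\sigma\|f\|_p$, and substituting this produces $h\sum_k|f(kh)|^p\le(1+h\sigma)^p\|f\|_p^p$, which is the claim.

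I do not anticipate a genuine obstacle: $f$ is smooth and, by Bernstein, $f'\in L^p(\mathbb{R})$, so every manipulation above is legitimate, and the sum over $k$ is handled simply by the tiling of $\mathbb{R}$ by the $I_k$. The one place demanding care is the pointwise estimate: the weight $h$ multiplying $|f'|$ must be chosen exactly right --- it is dictated by the length of the averaging interval $I_k$ --- so that the combination $|f|+h|f'|$ appears and the final constant is precisely $1+h\sigma$ rather than something larger; a less careful representation of $f(kh)$ or a different interval length would only yield a weaker constant.
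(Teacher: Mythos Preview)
Your argument is correct and follows essentially the same line as the paper's: both apply the fundamental theorem of calculus on each interval $[kh,(k+1)h]$, use H\"older/Jensen to pass to $p$-th powers, sum, and finish with the Bernstein inequality (\ref{Bern-1}). The only cosmetic difference is that the paper selects a mean-value point $y_k\in(x_k,x_{k+1})$ with $h|f(y_k)|^p=\int_{x_k}^{x_{k+1}}|f|^p$ and then applies the reverse triangle inequality in $\ell^p$, whereas you average over $I_k$ and invoke Jensen; the final constant $1+h\sigma$ is identical.
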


\begin{proof}
The case $p=\infty $ is trivial.
For  $1\leq p<\infty$, one has
$$
\|f\|_{p}^{p}=\int_{\mathbb{R}}|f(x)|^{p}dx=\sum_{k}\int_{x_{k}}^{x_{k+1}}|f(x)|^{p}dx=h\sum_{k}|f(y_{k})|^{p},
$$
where $y_{k}\in (x_{k}, x_{k+1})$. 
Now, by using the inequality
$$
\left|\>\|a\|-\|b\|\>\right|\leq \|a-b\|,
$$
we have
$$
\left|\left(h\sum_{k}|f(y_{k})|^{p}\right)^{1/p}-\left(h\sum_{k}|f(x_{k})|^{p}\right)^{1/p}\right|\leq
\left(    h \sum_{k}f(y_{k} ) -h\sum_{k} f(x_{k})                      \right)^{1/p}\leq
$$
$$
\left(h\sum_{k}|f(y_{k})-f(x_{k})|^{p}\right)^{1/p} \leq \left(h\sum_{k}\left |\int_{x_{k}}^{x_{k+1}} f^{'}(\tau)d\tau\right|^{p}\right)^{1/p}\leq
$$
$$
\left(h\sum_{k}\int_{x_{k}}^{x_{+1}}|f^{'}|^{p}d\tau h^{p/q}\right)^{1/p}=      h\|f^{'}\|_{p}\leq h\sigma\|f\|_{p},                                                                                        
$$
where we utilized the Bernstein inequality (\ref{Bern-1}). From here for $x_{k}=hk, \>k\in \mathbb{Z}, h>0,$ one obtains
$$
\left(h\sum_{k}|f(x_{k})|^{p}\right)^{1/p}=
$$
$$
\left(\left(h\sum_{k}|f(x_{k})|^{p}\right)^{1/p}-\left(h\sum_{k}|f(y_{k})|^{p}\right)^{1/p}\right)+
$$
$$
\left(h\sum_{k}|f(y_{k})|^{p}\right)^{1/p}\le (1+h\sigma)\|f\|_{p}, \>\>\>f\in B_{\sigma}^{p},\>\>1\leq p\leq\infty.
$$
Thus we proved for $f\in B_{\sigma}^{p},\>x_{k}=hk, \>k\in \mathbb{Z}, h>0,$ the next inequality 
\begin{equation}\label{PP}
\left(h\sum_{k}|f(x_{k})|^{p}\right)^{1/p}\leq (1+h\sigma)\|f\|_{p}.
\end{equation}
\end{proof}
Lemma is proven.

\begin{theorem}\label{N}
If $f\in B_{\sigma}^{p},\>1\leq p\leq \infty,\>h>0,\>x_{k}=kh,\>k\in \mathbb{Z},$ then
\begin{equation}\label{Nik}
\|f\|_{p}\leq \sup_{x\in \mathbb{R} }
\left(h\sum_{k} |f(x- x_{k})|^{p}\right)^{1/p}\leq (1+h\sigma)\|f\|_{p}
\end{equation}
\end{theorem}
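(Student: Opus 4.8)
\emph{Proof proposal.} The plan is to treat the two inequalities separately: the right-hand one is a direct consequence of Lemma~\ref{PP} after a translation, while the left-hand one follows from an averaging argument over a single period of length $h$. Throughout I take $1\le p<\infty$; the case $p=\infty$ is handled at the end by noting that, under the usual convention $(h\sum_k|\cdot|^p)^{1/p}=\sup_k|\cdot|$, one has $\sup_{x}\sup_{k}|f(x-x_k)|=\sup_{y\in\mathbb R}|f(y)|=\|f\|_\infty$, since $\{x-kh:x\in\mathbb R,\ k\in\mathbb Z\}=\mathbb R$, and both stated inequalities become equalities.

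For the upper bound, fix $x\in\mathbb R$ and set $f_x(t)=f(t+x)$. Reindexing by $j=-k$ gives $h\sum_{k}|f(x-x_k)|^p=h\sum_{j}|f_x(x_j)|^p$ since $x_k=kh$ and $\{-k:k\in\mathbb Z\}=\mathbb Z$. Because $\mathcal F f_x(\xi)=e^{ix\xi}\mathcal F f(\xi)$ has the same support as $\mathcal F f$, and the $L_p$-norm is translation invariant, we have $f_x\in B_\sigma^p$ with $\|f_x\|_p=\|f\|_p$. Applying inequality (\ref{PP}) of Lemma~\ref{PP} to $f_x$ yields $\bigl(h\sum_k|f(x-x_k)|^p\bigr)^{1/p}\le(1+h\sigma)\|f\|_p$, and taking the supremum over $x\in\mathbb R$ gives the right-hand inequality in (\ref{Nik}).

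For the lower bound, consider the nonnegative measurable $h$-periodic function $\Phi(x)=\sum_{k}|f(x-x_k)|^p$. By Tonelli's theorem (every term is nonnegative) and the substitution $u=x-kh$,
\[
\int_0^h\Phi(x)\,dx=\sum_{k}\int_0^h|f(x-kh)|^p\,dx=\sum_{k}\int_{-kh}^{(1-k)h}|f(u)|^p\,du=\int_{\mathbb R}|f(u)|^p\,du=\|f\|_p^p,
\]
where the last equality uses that the intervals $[-kh,(1-k)h]$, $k\in\mathbb Z$, tile $\mathbb R$ up to a set of measure zero. Hence $\|f\|_p^p=\int_0^h\Phi(x)\,dx\le h\sup_{x\in\mathbb R}\Phi(x)=\sup_{x\in\mathbb R}\Bigl(h\sum_k|f(x-x_k)|^p\Bigr)$, which is precisely the left-hand inequality in (\ref{Nik}). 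Combining the two estimates completes the proof.

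I do not expect a real obstacle here: the substantive analytic input, namely the Bernstein inequality feeding into Lemma~\ref{PP}, has already been established. The only points needing a word of care are the measurability and (via Tonelli) a.e.\ finiteness of $\Phi$, the elementary reindexing $j=-k$, and the verification that the translates $f_x$ remain in $B_\sigma^p$ with unchanged norm; all of these are routine.
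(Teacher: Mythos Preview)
Your proof is correct and follows essentially the same route as the paper: the upper bound is obtained by applying Lemma~\ref{PP} to a translated (equivalently, reflected) copy of $f$, and the lower bound by integrating the $h$-periodic function $\sum_k|f(x-x_k)|^p$ over one period and bounding the integral by $h$ times its supremum. Your write-up is in fact a bit more careful than the paper's (explicitly invoking Tonelli, the reindexing, and the Fourier-support argument for $f_x\in B_\sigma^p$), but the ideas are identical.
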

\begin{proof}
The case $p=\infty $ is trivial.
For a fixed $t\in \mathbb{R}$ apply the inequality (\ref{PP}) to the function 
$$
x\mapsto f(t-x),
$$
to obtain the next inequality for every $t\in \mathbb{R}$
$$
\left(h\sum_{k}|f(t-x_{k})|^{p}\right)^{1/p}\leq (1+h\sigma)\|f\|_{p},
$$
which is proving the right had side of the inequality (\ref{Nik}).

This inequality shows that for an  $f\in B_{\sigma}^{p},\>1\leq p\leq\infty,$ the middle part  of (\ref{Nik})  is finite and 
$$
\int_{\mathbb{R}}|f|^{p}dx=\sum_{k}\int_{x_{k}}^{x_{k+1}}|f|^{p}dx=\sum_{k}\int_{0}^{h}|f(x_{k}-x)|^{p}dx=
$$
$$
\int_{0}^{h}\sum_{k}|f(x_{k}-x)|^{p}dx\leq h\sup_{x\in \mathbb{R}}\sum_{k}|f(x_{k}-x)|^{p}<\infty.
$$
Theorem is proven.
\end{proof}

\begin{col} For $1\leq p\leq q\leq \infty,$ the following continuous embeddings hold
$$
B_{\sigma}^{1}\subset B_{\sigma}^{p}\subset B_{\sigma}^{q}\subset B_{\sigma}^{\infty}.
$$
\end{col}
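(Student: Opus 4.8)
The plan is to prove the chain of embeddings by establishing the single key fact that for $1 \leq p \leq q \leq \infty$ and $f \in B_\sigma^p$, one has $f \in B_\sigma^q$ together with a norm bound $\|f\|_q \leq C\|f\|_p$; then $B_\sigma^p \subset B_\sigma^q$ follows, and iterating through the endpoints $p=1$ and $q=\infty$ gives the full chain. The two extreme inclusions $B_\sigma^1 \subset B_\sigma^p$ and $B_\sigma^q \subset B_\sigma^\infty$ are just special cases of the generic step, so there is really only one thing to prove.

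The main step is to compare the $L_p$ and $L_q$ norms using Theorem~\ref{N}. First I would handle $q = \infty$: if $f \in B_\sigma^p$ with $1 \leq p < \infty$, pick any fixed $h > 0$ (say $h = 1/\sigma$, or just $h=1$) and apply the left inequality of (\ref{Nik}) reversed — more precisely, for any $x \in \mathbb{R}$ the single term $h|f(x)|^p$ is bounded by $h\sum_k |f(x - x_k)|^p$, so
\[
h^{1/p}\|f\|_\infty \leq \sup_{x}\left(h\sum_k |f(x-x_k)|^p\right)^{1/p} \leq (1+h\sigma)\|f\|_p,
\]
which shows $f \in L_\infty$; combined with $f \in C^\infty(\mathbb{R})$ and $\mathrm{supp}\,\mathcal{F}f \subset [-\sigma,\sigma]$ this gives $f \in B_\sigma^\infty$. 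For the general case $1 \leq p \leq q < \infty$, I would interpolate: since $f \in L_p \cap L_\infty$ (the latter just shown), the elementary bound $|f(x)|^q = |f(x)|^p |f(x)|^{q-p} \leq \|f\|_\infty^{q-p} |f(x)|^p$ integrates to $\|f\|_q^q \leq \|f\|_\infty^{q-p}\|f\|_p^p$, hence $f \in L_q$, and again $f \in C^\infty$ with Fourier support in $[-\sigma,\sigma]$ places $f$ in $B_\sigma^q$. Continuity of each embedding is read off from the explicit constants: $\|f\|_q \leq \|f\|_\infty^{1-p/q}\|f\|_p^{p/q} \leq \big((1+h\sigma)h^{-1/p}\|f\|_p\big)^{1-p/q}\|f\|_p^{p/q}$, which is a constant multiple of $\|f\|_p$.

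The only subtlety — and the place to be careful rather than the place of genuine difficulty — is making sure the membership criterion for $B_\sigma^q$ is verified in full: one needs $f \in L_q(\mathbb{R})$, $f \in C^\infty(\mathbb{R})$, and $\mathrm{supp}\,\mathcal{F}f \subset [-\sigma,\sigma]$. The last two conditions are inherited directly from $f \in B_\sigma^p$ and do not depend on the exponent, so the whole content is the $L_p \to L_\infty \to L_q$ norm comparison, for which Theorem~\ref{N} does all the work. I expect no real obstacle; the argument is a short deduction from the already-established Plancherel–Pólya-type inequality.
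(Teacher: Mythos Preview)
Your proposal is correct, but it takes a slightly different route from the paper. The paper applies the discrete $\ell^{p}\hookrightarrow\ell^{q}$ inclusion directly to the sampling sum: from the left inequality in (\ref{Nik}) one has $\|f\|_{q}\leq\big(h\sum_{k}|f(x-x_{k})|^{q}\big)^{1/q}$, then the elementary inequality $\big(\sum_{k}a_{k}^{q}\big)^{1/q}\leq\big(\sum_{k}a_{k}^{p}\big)^{1/p}$ switches the exponent to $p$, and the right inequality in (\ref{Nik}) closes the loop, giving $\|f\|_{q}\leq h^{1/q-1/p}(1+h\sigma)\|f\|_{p}$ in a single uniform step for all $1\leq p\leq q\leq\infty$. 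You instead first extract the case $q=\infty$ by keeping a single term of the discrete sum and then interpolate via the pointwise bound $|f|^{q}\leq\|f\|_{\infty}^{q-p}|f|^{p}$. Both arguments rest on Theorem~\ref{N}; the paper's version is a bit more streamlined and produces a cleaner constant, while yours has the virtue of using only the most elementary interpolation and of making the $L_{\infty}$ bound explicit as an intermediate step.
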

\begin{proof}
Applying the inequality
$$
\left(\sum_{k}a_{k}^{q}\right)^{1/q}\leq \left(\sum_{k}a_{k}^{p}\right)^{1/p},
\>\>\>1\leq p\leq q\leq \infty,
$$
along with (\ref{Nik}) one obtains for $f\in B_{\sigma}^{p},\>1\leq p\leq \infty,\>h>0,\>x_{k}=kh,\>k\in \mathbb{Z},$
$$
\|f\|_{q}\leq \left(h\sum_{k} |f(x- x_{k})|^{q}\right)^{1/q}\leq 
$$
$$
\left(\sum_{k} \left(h^{1/q}|f(x- x_{k})|\right)^{q}\right)^{1/q}\leq
\left(\sum_{k} \left(h^{1/q}|f(x- x_{k})|\right)^{p}\right)^{1/p}\leq
$$
$$
h^{1/q-1/p}\left(h\sum_{k} |f(x-x_{k})|^{p}\right)^{1/p}\leq  h^{1/q-1/p}(1+h\sigma)\|f\|_{p}.
$$
Corollary is proven.
\end{proof}

\section{Sampling theorems}\label{sampling-theor}

Let us remind that  the function $\text{ sinc}$ is defined for $x\in \mathbb{R}$ as follows
$$
\text{ sinc}\> x=\begin{cases}
                \frac{\sin x }{x},  &\text {x $\neq 0$}\\
                1                     ,& \text {x$=0$}.
                \end{cases}
                $$

\begin{theorem}\label{sampling1}
For any $f\in B_{\sigma}^{p},\>\>1\leq p<\infty,\>\>\sigma>0,$ the following formula holds
\begin{equation}\label{S1}
f(x)=\sum_{k\in \mathbb{Z}}  f\left(\frac{k\pi}{\sigma}\right)\textit{sinc}\left(\frac{\sigma}{\pi}x-k\right),
\end{equation}
and also  for each $m\in \mathbb{Z}$ one has
\begin{equation}\label{derivative}
f^{(m)}(x)=\sum_{k\in \mathbb{Z}}f\left(\frac{k\pi}{\sigma}\right)\left(\frac{d}{dx}\right)^{m} sinc\left(\frac{\sigma x}{\pi}-k\right),
\end{equation}
where in the both cases the series converge absolutely and uniformly on the entire $\mathbb{R}$.
\end{theorem}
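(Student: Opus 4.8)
The plan is to establish the classical Whittaker--Shannon--Kotel'nikov sampling theorem for $B_\sigma^p$ with $1\le p<\infty$, and then obtain the derivative formula \eqref{derivative} by term-by-term differentiation once uniform convergence of the relevant series is in hand. First I would reduce to the Paley--Wiener picture: for $f\in B_\sigma^p$ the distributional Fourier transform $\mathcal{F}f$ is supported in $[-\sigma,\sigma]$, and since $B_\sigma^p\subset B_\sigma^\infty$ by the Corollary, $f$ is bounded; combined with $f\in L^p$ this gives enough regularity to treat $\mathcal{F}f$ as an honest (integrable, compactly supported) function after a standard mollification argument. On $(-\sigma,\sigma)$ expand $\mathcal{F}f$ in its Fourier series with respect to the orthogonal system $\{e^{i\pi k\xi/\sigma}\}_{k\in\mathbb{Z}}$; the Fourier coefficients are, up to normalization, precisely the samples $f(k\pi/\sigma)$, by the inversion formula $f(x)=\mathcal{F}^{-1}\mathcal{F}f(x)$ evaluated at $x=k\pi/\sigma$. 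Applying $\mathcal{F}^{-1}$ to the Fourier expansion and using that $\mathcal{F}^{-1}$ of the truncated exponential $e^{i\pi k\xi/\sigma}\chi_{[-\sigma,\sigma]}$ is a translated, dilated $\mathrm{sinc}$, one arrives formally at \eqref{S1}.

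The substantive work is justifying convergence. For the $L^p$-convergence and pointwise identity I would use the Plancherel--P\'olya inequality \eqref{PP}: with $h=\pi/\sigma$ and $x_k=k\pi/\sigma$, inequality \eqref{PP} gives $\bigl(\tfrac{\pi}{\sigma}\sum_k|f(k\pi/\sigma)|^p\bigr)^{1/p}\le 2\|f\|_p$, so the coefficient sequence $\{f(k\pi/\sigma)\}$ lies in $\ell^p$. For $p=1$ this $\ell^1$ bound, together with $\|\mathrm{sinc}(\tfrac{\sigma}{\pi}x-k)\|_\infty=1$, gives absolute and uniform convergence of \eqref{S1} immediately. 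For $1<p<\infty$ one needs a bit more: the translates $\{\mathrm{sinc}(\tfrac{\sigma}{\pi}\,\cdot-k)\}$ form an orthogonal (indeed orthonormal after scaling) system in $L^2$ and a bounded system in $L^{p'}$ in the appropriate sense, so the series converges in $L^p$ with norm controlled by the $\ell^p$ norm of the coefficients; uniform convergence then follows because the partial sums are again in $B_\sigma^p$ (being finite linear combinations of shifted sincs, each in every $B_\sigma^q$) and one can apply the embedding $B_\sigma^p\subset B_\sigma^\infty$ to an $L^p$-Cauchy tail to upgrade to uniform convergence. A cleaner route, which I would actually follow in the write-up, is: prove \eqref{S1} first for $f\in PW_\sigma=B_\sigma^2$ by the Hilbert-space Fourier-series argument (Parseval), then extend to general $1\le p<\infty$ by density and by the norm inequalities of Section~\ref{more-prop}, controlling both sides simultaneously via \eqref{Nik} and \eqref{PP}.

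For the derivative formula \eqref{derivative}, once \eqref{S1} converges uniformly on $\mathbb{R}$ I would note that the series $\sum_k f(k\pi/\sigma)\bigl(\tfrac{d}{dx}\bigr)^m\mathrm{sinc}(\tfrac{\sigma x}{\pi}-k)$ also converges absolutely and uniformly: the $m$-th derivative of $\mathrm{sinc}(\tfrac{\sigma}{\pi}x-k)$ decays like $|x-k\pi/\sigma|^{-1}$ uniformly in $x$ away from the node and is globally bounded, and more precisely one has a uniform bound $\bigl|\bigl(\tfrac{d}{dx}\bigr)^m\mathrm{sinc}(\tfrac{\sigma}{\pi}x-k)\bigr|\le C_m(1+\mathrm{dist}(x,\tfrac{\pi}{\sigma}\mathbb{Z}\setminus\{x\}))^{-1}$, so pairing with the $\ell^p$ (hence $\ell^\infty$, and for $p>1$ summably-tailed) coefficient sequence via H\"older gives uniform convergence of the differentiated series. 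A standard theorem on term-by-term differentiation of uniformly convergent series whose derivatives also converge uniformly then yields \eqref{derivative}. One should also invoke, for $m\ge 1$, that $f^{(m)}\in B_\sigma^p$ (established in Section~\ref{Bern-spaces} via the Boas formula), which guarantees $f^{(m)}$ is itself represented by its own sampling series and cross-checks consistency.

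The main obstacle, and the step I would spend the most care on, is the passage to $1<p<\infty$: unlike $p=1$ there is no immediate absolute-convergence argument, and unlike $p=2$ there is no orthogonality to lean on directly, so one must either invoke a Plancherel--P\'olya-type $L^p$ frame bound for the sinc system or argue by density from $PW_\sigma\cap B_\sigma^p$ while keeping uniform estimates. The inequalities \eqref{PP} and \eqref{Nik} from Section~\ref{more-prop} are exactly the tools that make this work, and I expect the proof in the paper to route everything through them; the sinc-kernel computations and the differentiation step are routine by comparison.
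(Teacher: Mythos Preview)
Your approach is valid but genuinely different from the paper's. For the identity \eqref{S1} you expand $\mathcal{F}f$ as a Fourier series on $(-\sigma,\sigma)$ and invert; the paper instead applies the Poisson summation formula to the product $\tau\mapsto f_1(\tau)f_2(t-\tau)$ with $f_1\in B^p_{\pi\sigma}$, $f_2\in B^q_{\pi\sigma}$, obtaining a discrete-convolution identity for $f_1\ast f_2$, then uses commutativity of convolution and specializes to $f_1=f$, $f_2=\mathrm{sinc}(\sigma\,\cdot/\pi)$. Your Fourier-series route is the textbook derivation and requires a little care when $p>2$ (since $\mathcal{F}f$ need not be a function); the paper's convolution trick sidesteps this by keeping the computation on the physical side and reduces everything to the single scalar identity $\frac{1}{\sqrt{2\pi}\sigma}\sum_k g(k/\sigma)=\mathcal{F}g(0)$ for $g\in B^1_{2\pi\sigma}$.

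Where you overcomplicate is the convergence for $1<p<\infty$: you flag it as ``the main obstacle'' and reach for density, frames, or $L^p$-Cauchy arguments, but the paper disposes of it in one line by H\"older. Since $\mathrm{sinc}\in L^{p'}(\mathbb{R})$ for every $p'>1$, applying the Plancherel--P\'olya bound \eqref{PP} to both $f$ and to the (shifted) $\mathrm{sinc}$ gives $\{f(k\pi/\sigma)\}\in\ell^p$ and $\{\mathrm{sinc}(\sigma x/\pi-k)\}_k\in\ell^{p'}$ uniformly in $x$, so $\sum_k|f(k\pi/\sigma)|\,|\mathrm{sinc}(\sigma x/\pi-k)|$ is bounded by a product of these norms, yielding absolute and uniform convergence directly. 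You actually invoke exactly this H\"older pairing for the differentiated series \eqref{derivative}, so the simplest fix is to use it for \eqref{S1} as well and drop the density/frame discussion entirely.
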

\begin{proof}
Because the function $\rm sinc$ belongs to $L_{p}(\mathbb{R})$ for any $p>1$, using the right hand side of the inequality in Theorem \ref{N} and the H\"older inequality we obtain absolute and uniform convergence of the series in (\ref{S1}).

Since $\mathcal{F}(f)$ has support in $(-2\pi\sigma, \>2\pi\sigma)$, 
the Poisson summation formula (\ref{Poisson}) applied to $f\in B_{2\pi \sigma}^{1}$ with $\lambda=1/\sigma, \>t=0,$ gives
\begin{equation}\label{ApPoisson}
\frac{1}{\sqrt{2\pi}\sigma}\sum_{k}f\left(k/\sigma\right)=\mathcal{F}(f)(0).
\end{equation}
For  $f_{1}\in B_{\pi\sigma}^{p}, \>f_{2}\in B_{\pi\sigma}^{q},\>1/p+1/q=1, $ consider the convolution 
$$
f_{1}\ast f_{2}(t)=\frac{1}{\sqrt{2\pi}}\int_{\mathbb{R}}f_{1}(\tau f_{2}(t-\tau)d\tau,
$$
and apply  the formula ( \ref{ApPoisson}) to the product 
$$
\tau\mapsto f_{1}(\tau)f_{2}(t-\tau),
$$
to obtain
 \begin{equation}\label{conv}sinc
f_{1}\ast f_{2}(t)=\frac{1}{\sqrt{2\pi}\sigma}\sum_{k}f_{1}(k/\sigma)f_{2}(t-k/\sigma),
 \end{equation}
where the next relation was used 
$$
\mathcal{F}(f)(0)=\frac{1}{\sqrt{2\pi}}\int_{\mathbb{R}}f(\tau )d\tau.
$$
since the continuous convolution is commutative
$$
f_{1}\ast f_{2}(t)=f_{2}\ast f_{1}(t),
$$
the formula (\ref{conv}) shows the following property 
$$
\frac{1}{\sqrt{2\pi}\sigma}\sum_{k}f_{1}(k/\sigma)f_{2}(t-k/\sigma)=\frac{1}{\sqrt{2\pi}\sigma}\sum_{k}f_{1}(t-k/\sigma)f_{2}(k/\sigma).
$$
Now, we apply the last formula to the functions$f_{1}(x)=f(x)$ and $f_{2}(x)=sinc\left(\frac{\sigma}{\pi} x\right)$ to have

$$
\sum_{k}f\left(\frac{k\pi}{\sigma}\right)
sinc
\left( \frac{\sigma}{\pi}
\left(t-\frac{k\pi}{\sigma}\right)\right)=\sum_{k} f\left(t-\frac{k\pi}{\sigma}\right)sinc(k)=f(x).
$$
It proves the formula (\ref{S1}). 

To prove (\ref{derivative}) we note that 
by applying the Leibnitz formula  
one can obtain for $m\in \mathbb{N},$ 
\begin{equation}\label{sinc-1}
sinc^{(m)}(x)=\left(\frac{1}{\pi x}\sin \>\pi x \right)^{(m)}=\sum_{j=0}^{m} 
\left( \begin{array}{c} m \\ j \end{array} \right)   
 (\sin \pi x)^{(j)}\left(\frac{1}{\pi x}\right)^{(m-j)}=
$$
$$
\sum_{j=0}^{m}\left( \begin{array}{c} m \\ j \end{array} \right)
\pi^{j}\sin\left(\pi x+\frac{j\pi}{2}\right) \frac{(-1)^{m-j}(m-j)!}{\pi x^{m-j+1}}=  
$$
$$
\frac{(-1)^{m}m!}{\pi x^{m+1}}\sum_{j=0}^{m}\sin\left(\pi x+\frac{j\pi}{2}\right)\frac{(-1)^{j}(\pi x)^{j}}{j!},
\end{equation}
which shows that for every $m\in \mathbb{Z}$ the function $sinc^{(m)}(x)$ belongs to $L_{p}(\mathbb{R})$ for $p>1$.
According to the  H\"older's inequality  and (\ref{Nik}) the series in (\ref{derivative}) is absolutely and uniformly convergent which allows term-by-term differentiation of (\ref{S1}).
Theorem is proven.
\end{proof}

\begin{remark}
The Theorem \ref{sampling1} is not necessary true for functions in $B_{\sigma}^{\infty}$.
 Indeed, consider $\sin z,\>\>z\in \mathbb{C},$ for which 
 \begin{equation}\label{sin}
 |\sin z|=\frac{1}{2}|e^{i z}-e^{-i z}|\leq\frac{1}{2}(e^{-Im z}+e^{ Im z})\leq e^{|Im z|},
 \end{equation}
 thus $\sin x \in B_{1}^{\infty}$.
 For this function the formula (\ref{sin}) takes the form
 $$
 \sin x=\sum_{k\in \mathbb{Z}}  \sin (k\pi) sinc\left(\frac{1}{\pi}x-k\right),
 $$
 where the right-hand side is identical zero.
\end{remark}

The case $B_{\sigma}^{\infty} $ is covered in the following sampling theorem \cite{BSS-1}.

\begin{theorem}\label{smaller sampling}
If $f\in B_{\sigma}^{p},\>\>1\leq p<\infty,$ or $f\in B_{\tau }^{\infty}$ for some $0\leq \tau<\sigma$, then for $z\in \mathbb{C}$ one has
\begin{equation}\label{S2}
f(z)=\sum_{k\in \mathbb{Z}}  f\left(\frac{k\pi}{\sigma}\right)sinc\left(\frac{\sigma}{\pi}z-k\right),
\end{equation}
where the series uniformly converges on compact subsets of $\mathbb{C}$.
\end{theorem}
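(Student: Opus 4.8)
The plan is to reduce Theorem \ref{smaller sampling} to the already-established Theorem \ref{sampling1} by a regularization argument, and then upgrade convergence in $L^p$ to uniform convergence on compact subsets of $\mathbb{C}$ via a standard entire-function estimate. First I would handle the case $f\in B_\sigma^p$, $1\le p<\infty$, on the real line: Theorem \ref{sampling1} already gives \eqref{S1} with absolute and uniform convergence on $\mathbb{R}$. To pass from $x\in\mathbb{R}$ to $z\in\mathbb{C}$, observe that both sides of \eqref{S2} are entire functions of exponential type $\sigma$ (the left side because $f\in B^p_\sigma$ extends to such an entire function by Paley--Wiener; the right side because each $\operatorname{sinc}(\tfrac{\sigma}{\pi}z-k)$ is entire of exponential type $\sigma$ and, by the Plancherel--Polya inequality \eqref{Nik}, the coefficient sequence $\{f(k\pi/\sigma)\}$ is in $\ell^p$, so the partial sums form a normal family with a type-$\sigma$ bound). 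Since the two entire functions agree on $\mathbb{R}$, they agree on $\mathbb{C}$, and a Vitali/Montel argument promotes the real-line convergence to locally uniform convergence on $\mathbb{C}$.

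For the case $f\in B_\tau^\infty$ with $0\le\tau<\sigma$, the idea is to approximate $f$ by functions in $B_{\sigma}^1$ (or $B_\sigma^p$) for which the previous case applies, and then take a limit. Concretely, I would fix a bump $\varphi$ whose Fourier transform equals $1$ on $[-\tau,\tau]$ and is supported in $(-\sigma,\sigma)$, and for $\varepsilon>0$ consider $f_\varepsilon(x)=f(x)\,\psi_\varepsilon(x)$ where $\psi_\varepsilon=\mathcal{F}^{-1}$ of a suitable approximate identity times $\varphi$ — more carefully, one takes $f_\varepsilon=f\cdot g_\varepsilon$ with $g_\varepsilon$ a fixed sinc-type function rescaled so that $g_\varepsilon\to 1$ pointwise, $g_\varepsilon\in B^1_{\sigma-\tau}$ type bound under control, hence $f_\varepsilon\in B^1_{\sigma}$ (the Fourier support adds), and $f_\varepsilon\to f$ uniformly on compacta while $f_\varepsilon(k\pi/\sigma)\to f(k\pi/\sigma)$. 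Applying \eqref{S2} (already proved in the $p=1$ case) to each $f_\varepsilon$ and passing to the limit, using the uniform-on-compacta control of the type-$\sigma$ partial sums (so the double limit can be exchanged), yields \eqref{S2} for $f$. The condition $\tau<\sigma$ is exactly what leaves room for the mollifier $g_\varepsilon$ to live in $B^1_{\sigma-\tau}$, so that the sampling rate $\pi/\sigma$ is genuinely oversampling for $f$ — this is why the naive attempt with $\tau=\sigma$ fails, as the preceding remark on $\sin x$ shows.

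The main obstacle, and the step I would be most careful about, is the interchange of limits in the $B^\infty_\tau$ case: one must justify that $\lim_{\varepsilon\to0}\sum_k f_\varepsilon(k\pi/\sigma)\operatorname{sinc}(\tfrac{\sigma}{\pi}z-k)=\sum_k f(k\pi/\sigma)\operatorname{sinc}(\tfrac{\sigma}{\pi}z-k)$ uniformly on a compact set. This requires a tail estimate on the series that is uniform in $\varepsilon$, which in turn follows from a uniform bound $\sup_\varepsilon\|f_\varepsilon\|_{\infty}<\infty$ together with the decay of $\operatorname{sinc}$; alternatively one invokes the normal-families argument directly, noting that the sequence of entire functions $z\mapsto\sum_{|k|\le N}f_\varepsilon(k\pi/\sigma)\operatorname{sinc}(\tfrac{\sigma}{\pi}z-k)$ is uniformly bounded of exponential type $\sigma$ on horizontal strips, hence precompact in the topology of locally uniform convergence, and the limit is identified on $\mathbb{R}$ by the $p=1$ result. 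The real-line piece and the Paley--Wiener identifications are routine; I would present those briefly and spend the bulk of the argument on the mollification and the limit exchange.
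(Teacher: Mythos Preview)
The paper does not prove Theorem~\ref{smaller sampling}; it simply states the result and cites \cite{BSS-1}. So there is no ``paper's own proof'' to compare against, and your proposal has to stand on its own.

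For the case $f\in B_\sigma^p$, $1\le p<\infty$, your plan is sound: Theorem~\ref{sampling1} already gives the identity on $\mathbb{R}$ with uniform convergence, the partial sums are entire of type $\sigma$ and (via the real-line convergence and the standard Phragm\'en--Lindel\"of bound $|F(z)|\le e^{\sigma|\operatorname{Im}z|}\|F\|_{L^\infty(\mathbb{R})}$) uniformly bounded on horizontal strips, so Vitali's theorem upgrades the convergence to compact subsets of $\mathbb{C}$.

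The gap is in the $B_\tau^\infty$, $\tau<\sigma$, case. Both justifications you propose for the interchange of limits fail for the same reason: $\operatorname{sinc}$ decays only like $1/k$, so a bound $\sup_\varepsilon\|f_\varepsilon\|_\infty<\infty$ together with the pointwise decay of $\operatorname{sinc}$ does \emph{not} yield a tail estimate uniform in $\varepsilon$ --- the majorant $\sum_k|\operatorname{sinc}(\tfrac{\sigma}{\pi}z-k)|$ diverges. Passing instead through $\ell^1$ or $\ell^2$ control on the samples of $f_\varepsilon=f\cdot g_\varepsilon$ does not help either: with $g_\varepsilon(x)=g(\varepsilon x)$ one has $\|g_\varepsilon\|_{L^1}\sim\varepsilon^{-1}$ and $\|g_\varepsilon\|_{L^2}\sim\varepsilon^{-1/2}$, so the Plancherel--P\'olya norms of $f_\varepsilon$ blow up as $\varepsilon\to0$. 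The normal-families alternative requires exactly the uniform boundedness (in both $N$ and $\varepsilon$) of the partial sums $S_N^\varepsilon$ on a strip, and this is the same estimate in disguise.

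What actually makes the $\tau<\sigma$ case work is not generic mollification but the cancellation coming from the strict gap $\sigma-\tau>0$. Writing $\operatorname{sinc}(\tfrac{\sigma}{\pi}z-k)=\dfrac{(-1)^k\sin\sigma z}{\sigma z-k\pi}$, the series becomes $\dfrac{\sin\sigma z}{\pi}\sum_k\dfrac{(-1)^k f(k\pi/\sigma)}{\sigma z/\pi-k}$, and the point is that the partial sums of $(-1)^kf(k\pi/\sigma)$ are \emph{bounded}: since $\hat f$ is supported in $[-\tau,\tau]$, the frequency variable $\pi(\xi+\sigma)/\sigma$ stays in a compact subset of $(0,2\pi)$, so the geometric sums are uniformly controlled. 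A Dirichlet/Abel summation then gives convergence and the needed uniform tail bound. Alternatively one argues by contour integration, as in the classical literature. Either way, you must exploit $\tau<\sigma$ structurally, not just to make room for a mollifier; the mollification scheme as written does not close.
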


The next  theorems can be found in \cite{BFHSS}.

\begin{theorem}(Valiron-Tschakalov sampling and interpolation formula)\label{classicalVT}

For $f\in B_{\sigma}^{\infty},\>\>\sigma>0,$ and for $z\in \mathbb{C}$ one has
\begin{equation}\label{VT-1}
f(z)=
$$
$$
zf^{'}(0)sinc\left(\frac{\sigma z}{\pi}\right)+f(0)sinc\left(\frac{\sigma z}{\pi}\right)+\sum_{k\in \mathbb{Z} \setminus{ 0}} f\left(\frac{k\pi}{\sigma}\right)\frac{\sigma z}{k\pi} sinc\left(\frac{\sigma z}{\pi}-k\right),
\end{equation}
where convergence is absolute and uniform on compact subsets of $\mathbb{C}$.
\end{theorem}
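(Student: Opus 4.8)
The plan is to deduce (\ref{VT-1}) from the Paley--Wiener-space sampling theorem already available, namely Theorem \ref{smaller sampling} applied with $p=2$, after a single well-chosen subtraction. To set up, I would first record that by the Paley--Wiener theorem $f$ extends to an entire function of exponential type at most $\sigma$ that is bounded on $\mathbb{R}$, and write $A=\|f\|_{L^\infty(\mathbb{R})}<\infty$. I would dispose of convergence at once: writing $\mathrm{sinc}\!\left(\frac{\sigma z}{\pi}-k\right)=\frac{\sin(\sigma z-k\pi)}{\sigma z-k\pi}$ and using $|\sin(\sigma z-k\pi)|=|\sin\sigma z|$, for $z$ in a compact set $K\subset\mathbb{C}$ and $|k|$ large enough that $|\sigma z-k\pi|\ge\frac12|k|\pi$ on $K$ one gets
\[
\left|f\!\left(\tfrac{k\pi}{\sigma}\right)\tfrac{\sigma z}{k\pi}\,\mathrm{sinc}\!\left(\tfrac{\sigma z}{\pi}-k\right)\right|\le A\cdot\frac{\sigma\sup_{K}|z|}{|k|\pi}\cdot\frac{2\sup_{K}|\sin\sigma z|}{|k|\pi}=\frac{C_{K}}{k^{2}},
\]
since $|f(k\pi/\sigma)|\le A$, so by the Weierstrass $M$-test the series in (\ref{VT-1}) converges absolutely and uniformly on $K$; hence its sum is entire and it suffices to prove the equality, which for $z=0$ reads $f(0)=f(0)$, so I may assume $z\ne0$.

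Next I would put $\varphi(z)=f(0)\,\mathrm{sinc}\!\left(\tfrac{\sigma z}{\pi}\right)=f(0)\frac{\sin\sigma z}{\sigma z}$, an entire function of exponential type at most $\sigma$ with $\varphi(0)=f(0)$, and define
\[
g(z)=\frac{f(z)-\varphi(z)}{z}.
\]
The numerator is entire of exponential type at most $\sigma$ and vanishes at $0$, so $g$ is entire of exponential type at most $\sigma$; and since $|\varphi(x)|\le|f(0)|$ for every $x\in\mathbb{R}$, one has $|g(x)|\le 2A/|x|$ for $|x|\ge1$ while $g$ is bounded near $0$, whence $g\in L^{2}(\mathbb{R})$. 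Therefore $g\in B_{\sigma}^{2}=PW_{\sigma}$, and Theorem \ref{smaller sampling} with $p=2$ gives, uniformly on compact subsets of $\mathbb{C}$,
\[
g(z)=g(0)\,\mathrm{sinc}\!\left(\tfrac{\sigma z}{\pi}\right)+\sum_{k\ne0}g\!\left(\tfrac{k\pi}{\sigma}\right)\mathrm{sinc}\!\left(\tfrac{\sigma z}{\pi}-k\right).
\]
It then remains to identify the samples of $g$: from $f(z)=f(0)+f'(0)z+O(z^{2})$ and $\varphi(z)=f(0)+O(z^{2})$ near the origin we get $(f-\varphi)(z)=f'(0)z+O(z^{2})$, so $g(0)=f'(0)$; and for $k\ne0$, since $\sin k\pi=0$ we have $\varphi(k\pi/\sigma)=0$, hence $g(k\pi/\sigma)=\frac{f(k\pi/\sigma)}{k\pi/\sigma}=\frac{\sigma}{k\pi}f\!\left(\tfrac{k\pi}{\sigma}\right)$. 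Inserting these values into the expansion above and multiplying by $z$ (valid for $z\ne0$, where $zg(z)=f(z)-\varphi(z)$) yields
\[
f(z)-f(0)\,\mathrm{sinc}\!\left(\tfrac{\sigma z}{\pi}\right)=zf'(0)\,\mathrm{sinc}\!\left(\tfrac{\sigma z}{\pi}\right)+\sum_{k\ne0}\frac{\sigma z}{k\pi}\,f\!\left(\tfrac{k\pi}{\sigma}\right)\mathrm{sinc}\!\left(\tfrac{\sigma z}{\pi}-k\right),
\]
and transferring $f(0)\,\mathrm{sinc}(\sigma z/\pi)$ to the right-hand side is exactly (\ref{VT-1}).

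The only genuine content lies in the middle step, and it is also the idea one must find: the correction should be $f(0)\,\mathrm{sinc}(\sigma z/\pi)$ rather than the constant $f(0)$, because this makes every nonzero sample of $g$ coincide with that of $f$ up to the factor $\sigma/(k\pi)$, while the extra division by $z$ turns the merely bounded $f$ into an $L^{2}$ function to which Theorem \ref{smaller sampling} applies; the check $g\in PW_{\sigma}$ itself is routine once one uses that dividing an entire function of exponential type $\le\sigma$ vanishing at a point by the corresponding linear factor does not raise the type, and that $|\varphi(x)|\le|f(0)|$. The value $g(0)=f'(0)$ is precisely what produces the extra term $zf'(0)\,\mathrm{sinc}(\sigma z/\pi)$ that distinguishes the Valiron--Tschakalov formula from the ordinary cardinal series, which fails on $B_{\sigma}^{\infty}$ (as the function $\sin$ shows). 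Alternatively one could run Valiron's classical residue argument, integrating $\frac{f(w)}{w-z}\cdot\frac{\sin\sigma z}{\sin\sigma w}\cdot\frac{z}{w}$ around the squares with vertices $\pm(n+\tfrac12)\frac{\pi}{\sigma}(1\pm i)$ and letting $n\to\infty$; there the main obstacle is forcing the contour integral to $0$, which rests on the bound $|\sin\sigma w|\ge\frac12 e^{\sigma|\mathrm{Im}\,w|}$ on those squares together with $|f(w)|\le A\,e^{\sigma|\mathrm{Im}\,w|}$, the factor $z/w$ supplying the extra decay needed for bounded (not merely $L^{2}$) $f$, with the double pole at $w=0$ again producing the $f'(0)$ term.
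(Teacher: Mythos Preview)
Your proof is correct and rests on the same core idea as the paper: pass from $B_\sigma^\infty$ to the $L^2$ sampling theorem by subtracting a correction that matches $f$ at the origin and then dividing by $z$. The paper subtracts the constant $f(0)$, obtaining $g(z)=(f(z)-f(0))/z\in B_\sigma^2$ with samples $(f(k\pi/\sigma)-f(0))/(k\pi/\sigma)$, and must then invoke a second identity---derived by applying the same reduction to $\mathrm{sinc}$ itself---to eliminate the residual $-f(0)$ terms. Your choice $\varphi(z)=f(0)\,\mathrm{sinc}(\sigma z/\pi)$ is a neat streamlining: because $\varphi$ already vanishes at every nonzero sampling node, the samples of $g$ are directly $\tfrac{\sigma}{k\pi}f(k\pi/\sigma)$ and the Valiron--Tschakalov formula falls out in a single step, with no auxiliary identity needed. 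You also supply the explicit $O(k^{-2})$ convergence estimate that the paper's proof leaves implicit.
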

\begin{proof}
If $f\in B_{\pi}^{\infty}$ then the function  
$$
g(z)=\begin{cases}
                \frac{f(z)-f(0)}{z},  &\text {z $\neq 0$}\\
               f^{'}(0)                     ,& \text {z$=0$}.
                \end{cases}
                $$
belongs to $B_{\pi}^{2}$ and we can use Theorem \ref{S1} to obtain
for $z\neq 0$
$$
\frac{f(z)-f(0)}{z}=f^{'}(0)sinc \>z+\sum_{k\in \mathbb{Z}\setminus \{0\}}\frac{f(k)-f(0)}{k} sinc(z-k).
$$
For $f=sinc$ it gives

$$
\frac{sinc\>z-1}{z}=-\sum_{k\in \mathbb{Z}\setminus \{0\}}\frac{sinc(z-k)}{k}.
$$
These two formulas imply (\ref{VT-1}).

\end{proof}

The next two statements can be found in \cite{BSS-1}.
\begin{theorem}(Classical sampling formula). For $f\in B_{\sigma}^{2},\>\>\sigma>0$, we have
$$
f(z)=\sum_{k\in \mathbb{Z}} f\left(\frac{k\pi}{\sigma}\right) \text{sinc}\left(\frac{\sigma z}{\pi}-k\right),\>\>z\in \mathbb{C},
$$
where the convergence being absolute and uniform in strips of bounded width parallel to the real line, in particular on the entire real line and on compact subsets in $\mathbb{C}$.
\end{theorem}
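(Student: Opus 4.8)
The plan is to run the classical Shannon argument through the Paley--Wiener description of $B^{2}_{\sigma}=PW_{\sigma}$. Note first that the pointwise identity on $\mathbb{R}$ (the case $p=2$, which lies in the range $1\le p<\infty$) is already contained in Theorem \ref{sampling1}, and convergence uniform on compact subsets of $\mathbb{C}$ is the case $f\in B^{2}_{\sigma}$ of Theorem \ref{smaller sampling}; so the only genuinely new point is to upgrade the convergence so that it is uniform on each horizontal strip $|\mathrm{Im}\,z|\le Y$, $Y>0$. I would nonetheless give one self-contained argument that yields the formula, the absolute convergence, and the uniform-in-strips convergence at once.

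By the Paley--Wiener theorem a function $f\in B^{2}_{\sigma}$ is the restriction to $\mathbb{R}$ of the entire function
$$
f(z)=\frac{1}{\sqrt{2\pi}}\int_{-\sigma}^{\sigma}\mathcal{F}f(\xi)\,e^{iz\xi}\,d\xi,\qquad z\in\mathbb{C},
$$
where $\mathcal{F}f\in L^{2}[-\sigma,\sigma]$. Expand $\mathcal{F}f$ in the orthogonal system $\{e^{-i\pi k\xi/\sigma}\}_{k\in\mathbb{Z}}$ of $L^{2}[-\sigma,\sigma]$,
$$
\mathcal{F}f=\sum_{k\in\mathbb{Z}}c_{k}\,e^{-i\pi k\xi/\sigma},\qquad c_{k}=\frac{1}{2\sigma}\int_{-\sigma}^{\sigma}\mathcal{F}f(\xi)\,e^{i\pi k\xi/\sigma}\,d\xi,
$$
the series converging in $L^{2}[-\sigma,\sigma]$. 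Comparing $c_{k}$ with the integral representation of $f$ at the point $z=k\pi/\sigma$ shows that $c_{k}=\tfrac{\sqrt{2\pi}}{2\sigma}\,f(k\pi/\sigma)$.

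Next I would substitute this Fourier series into the integral representation of $f(z)$ and integrate term by term; this is legitimate because for each fixed $z$ the function $\xi\mapsto e^{iz\xi}$ lies in $L^{2}[-\sigma,\sigma]$, so pairing it against an $L^{2}$-convergent series is continuous. The $k$-th term equals
$$
\frac{c_{k}}{\sqrt{2\pi}}\int_{-\sigma}^{\sigma}e^{i(z-\pi k/\sigma)\xi}\,d\xi=\frac{2\sigma}{\sqrt{2\pi}}\,c_{k}\,\mathrm{sinc}\!\left(\frac{\sigma z}{\pi}-k\right),
$$
and inserting $c_{k}=\tfrac{\sqrt{2\pi}}{2\sigma}f(k\pi/\sigma)$ gives the asserted sampling formula for every $z\in\mathbb{C}$ (in particular reproducing the $p=2$ case of Theorem \ref{sampling1} on $\mathbb{R}$).

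Finally, for the convergence, let $S_{N}(z)$ be the symmetric partial sum of the series and $R_{N}=\mathcal{F}f-\sum_{|k|\le N}c_{k}e^{-i\pi k\xi/\sigma}$ the Fourier tail of $\mathcal{F}f$, so that $f(z)-S_{N}(z)=\tfrac{1}{\sqrt{2\pi}}\int_{-\sigma}^{\sigma}R_{N}(\xi)e^{iz\xi}\,d\xi$. Cauchy--Schwarz together with the bound $\|e^{iz\,\cdot}\|_{L^{2}[-\sigma,\sigma]}\le\sqrt{2\sigma}\,e^{\sigma|\mathrm{Im}\,z|}$ gives
$$
\bigl|f(z)-S_{N}(z)\bigr|\le\sqrt{\frac{\sigma}{\pi}}\;e^{\sigma|\mathrm{Im}\,z|}\,\|R_{N}\|_{L^{2}[-\sigma,\sigma]},
$$
and since $\|R_{N}\|_{L^{2}}\to0$ the series converges uniformly on each strip $|\mathrm{Im}\,z|\le Y$. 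For absolute convergence one combines the square-summability of the samples, $\sum_{k}|f(k\pi/\sigma)|^{2}<\infty$ (Plancherel for the Fourier series of $\mathcal{F}f$; equivalently the case $p=2$ of Lemma \ref{PP}), with the elementary estimate $\sum_{k}|\mathrm{sinc}(\tfrac{\sigma z}{\pi}-k)|^{2}\le C(Y)$ holding uniformly for $|\mathrm{Im}\,z|\le Y$ (the numerators are $\le e^{\sigma Y}$ since $|\sin w|\le e^{|\mathrm{Im}\,w|}$, and the denominators are uniformly square-summable), and then applies Cauchy--Schwarz to the tail of the series. The only delicate point is to keep every estimate uniform over the whole strip instead of merely on compacta, and the exponential-type bound on $\|e^{iz\,\cdot}\|_{L^{2}[-\sigma,\sigma]}$ is exactly what supplies that uniformity; beyond this there is no real obstacle.
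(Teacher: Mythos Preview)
Your argument is correct. The paper itself does not prove this theorem at all: it is simply stated with a citation to \cite{BSS-1}, so there is no ``paper's own proof'' to compare against. The nearest thing in the paper is Theorem~\ref{sampling1}, which establishes the real-line identity for $1\le p<\infty$ via the Poisson summation formula and the commutativity of convolution with $\mathrm{sinc}$; that route does not by itself produce the strip-uniform estimates, which is the extra content here.

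Your Paley--Wiener/Fourier-series argument is the standard $L^{2}$ proof and is genuinely different from the Poisson-summation approach the paper uses for Theorem~\ref{sampling1}. Its advantage is exactly the one you exploit: pairing the $L^{2}$-tail $R_{N}$ of $\mathcal{F}f$ against $e^{iz\xi}$ and bounding $\|e^{iz\cdot}\|_{L^{2}[-\sigma,\sigma]}\le\sqrt{2\sigma}\,e^{\sigma|\mathrm{Im}\,z|}$ immediately gives uniform convergence on horizontal strips, not just on compacta. One small wording issue: in your absolute-convergence step the claim that ``the denominators are uniformly square-summable'' is not quite right as stated, since $|\sigma z/\pi-k|$ can be small for one or two values of $k$; what is true is that $\sum_{k}|\mathrm{sinc}(\sigma z/\pi-k)|^{2}\le C(Y)$ uniformly on the strip, because the finitely many nearby terms are controlled by the uniform boundedness of $\mathrm{sinc}$ on bounded horizontal strips while the remaining terms have $|\sigma z/\pi-k|\ge 1$. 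With that cosmetic fix the proof is complete.
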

The next one is called the Whittaker-Kotel{'}nikov-Shannon sampling theorem.
\begin{theorem}\label{WKS}
If $f\in B_{\sigma}^{2}, \>\sigma>0,$ then for each $m\in \mathbb{Z}$ 
$$
f^{(m)}(x)=\sum_{k\in \mathbb{Z}}f\left(\frac{k\pi}{\sigma}\right)\left(\frac{d}{dx}\right)^{m} \text{sinc}\left(\frac{\sigma x}{\pi}-k\right),
$$
where the series converges absolutely and uniformly on the entire $\mathbb{R}$ and allso in the $L_{2}(\mathbb{R})$-norm.
\end{theorem}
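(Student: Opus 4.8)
The plan is to reduce everything to the already-established classical sampling formula (Theorem \ref{sampling1}) together with a Bernstein-type argument for the derivatives. First I would recall that $B_\sigma^2 = PW_\sigma \subset B_\sigma^p$ for every $p > 1$ by the embedding corollary, so in particular $f \in B_\sigma^2$ lies in $B_\sigma^p$ for some finite $p > 1$ and Theorem \ref{sampling1} applies verbatim: the formula $f(x) = \sum_k f(k\pi/\sigma)\,\mathrm{sinc}(\sigma x/\pi - k)$ holds with the series converging absolutely and uniformly on $\mathbb{R}$, and the same theorem already supplies the term-by-term differentiated identity. So the bulk of the statement is a direct citation; the only genuinely new content is the additional claim of convergence in the $L_2(\mathbb{R})$-norm.

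For the $L_2$-convergence I would argue as follows. The functions $\varphi_k(x) = \mathrm{sinc}(\sigma x/\pi - k)$ form (up to the normalizing constant $\sqrt{\pi/\sigma}$) an orthogonal basis of $PW_\sigma$, and for $f \in PW_\sigma$ the coefficients $f(k\pi/\sigma)$ are precisely (a constant multiple of) the expansion coefficients of $f$ in this basis; hence $f = \sum_k f(k\pi/\sigma)\varphi_k$ converges in $L_2(\mathbb{R})$, and Parseval gives $\|f\|_2^2 = (\pi/\sigma)\sum_k |f(k\pi/\sigma)|^2$. For the differentiated series, note that $f' \in PW_\sigma$ as well (by the remark following \eqref{Boas11}, differentiation preserves $B_\sigma^p$), so applying the $L_2$-sampling formula to $f'$ in place of $f$ yields $f'(x) = \sum_k f'(k\pi/\sigma)\,\varphi_k(x)$ in $L_2$. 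To recognize this as the claimed series $\sum_k f(k\pi/\sigma)\varphi_k'(x)$ I would pass to the Fourier side: $\widehat{f'}(\xi) = i\xi\,\widehat f(\xi)$, multiplication by $i\xi$ is a bounded operator on $L_2(-\sigma,\sigma)$, and it commutes with the $L_2$-convergent partial-sum expansion of $\widehat f$; unwinding this gives that $\sum_{|k|\le N} f(k\pi/\sigma)\varphi_k \to f$ in the graph norm of $d/dx$, i.e. the term-by-term differentiated series converges to $f'$ in $L_2$. Iterating, $i^m\xi^m$ is still a bounded multiplier on $L_2(-\sigma,\sigma)$, so the same reasoning handles $f^{(m)}$ for every $m \in \mathbb{Z}_{\ge 0}$; for $m=0$ there is nothing to prove.

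The main obstacle, such as it is, is purely bookkeeping: one must be careful that the constant $\sqrt{\pi/\sigma}$ is inserted correctly so that $\{\sqrt{\sigma/\pi}\,\varphi_k\}$ is genuinely orthonormal in $L_2(\mathbb{R})$, and that the identification of the sampling values with Fourier coefficients is the standard one coming from $\widehat{\varphi_k}(\xi) = (\text{const})\,e^{-ik\pi\xi/\sigma}\mathbf{1}_{(-\sigma,\sigma)}(\xi)$. There is no deep difficulty, since the spectral multiplier $\xi \mapsto (i\xi)^m$ is bounded on $L_2(-\sigma,\sigma)$ and commutes with $L_2$-convergent series; the only point requiring attention is that convergence of $\sum_k c_k \varphi_k$ to $f$ in $L_2$ does not by itself imply convergence of $\sum_k c_k \varphi_k^{(m)}$ — one really does need to invoke $f^{(m)} \in PW_\sigma$ and apply the $m=0$ statement to $f^{(m)}$, which is why I would organize the proof to prove the $L_2$-sampling identity first as a self-contained fact about $PW_\sigma$ and then apply it repeatedly.
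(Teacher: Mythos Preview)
Your proposal is correct. Note, however, that the paper does not actually give its own proof of Theorem~\ref{WKS}: it is simply stated and attributed to \cite{BSS-1}. The pointwise and uniform-convergence content is, as you observe, already contained in Theorem~\ref{sampling1} (which applies directly with $p=2$, so the detour through the embedding $B_\sigma^2 \subset B_\sigma^p$ is unnecessary --- and as stated it is slightly off, since the paper's corollary only gives $B_\sigma^2 \subset B_\sigma^q$ for $q\ge 2$). The only genuinely new piece is the $L_2(\mathbb{R})$-convergence, and your argument via the bounded Fourier multiplier $(i\xi)^m$ on $L_2(-\sigma,\sigma)$ is the clean standard route: once $\sum_k f(k\pi/\sigma)\,\widehat{\varphi_k}\to\widehat f$ in $L_2(-\sigma,\sigma)$, continuity of multiplication by $(i\xi)^m$ immediately transports this to $\sum_k f(k\pi/\sigma)\,\varphi_k^{(m)}\to f^{(m)}$ in $L_2(\mathbb{R})$. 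The intermediate step of applying the sampling formula to $f'$ and then ``recognizing'' it as the differentiated series is redundant --- the multiplier argument alone does the job --- but it does no harm.
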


\section{Higher order interpolation formulas with coefficients of order $O(k^{-2})$.}\label{Higher order 2}
Starting with (\ref{sinc-1}) and 
expressing $\sin (\pi x+j\pi/2)$ in terms of $\sin \pi x$ and $\cos \pi x$ gives the next formula
\begin{equation}\label{formula}
sinc^{(m)}x=
$$
$$
\frac{(-1)^{m}m!}{\pi x^{m+1}}\left( \sin \pi x\sum_{\nu=0}^{[m/2]}\frac{(-1)^{\nu}(\pi x)^{2\nu}}{(2\nu)!}-   \cos \pi x \sum_{\nu=0}^{[(m-1)/2]}\frac{(-1)^{\nu}(\pi x)^{2\nu+1}}{(2\nu+1)!} \right).
\end{equation}
As it follows from the power series expansion 
$$
sinc \>x=\sum_{j\in \mathbb{N}\cup \{0\}} \frac{(-1)^{j}}{(2j+1)!}(\pi x)^{2j},
$$
the right hand side of (\ref{formula}) can be continuously extended to $x=0$ by setting $sinc^{(m)}(0)=0$ for  odd $m$, and $sinc ^{(m)}(0)=(-1)^{m/2}\pi^{m}/(m+1)$ for even $m$.

To obtain the next two identities (\ref{A})  and (\ref{B}) one has to notice that they follow from (\ref{formula}) by taking in account that 
$$
\sin \left(\pi(k-1/2)\right)=\cos \pi k=(-1)^{k}
$$ 
and 
$$
\cos \left(\pi(k-1/2)\right)=\sin \pi k=0.
$$
Thus we have
\begin{equation}\label{A}
(-1)^{k+1} \rm sinc ^{(2m-1)}\left(\frac{1}{2}-k\right)=
\frac{(2m-1)!}{\pi(k-\frac{1}{2})^{2m}}\sum_{j=0}^{m-1}\frac{(-1)^{j}}{(2j)!}\left(\pi\left(k-\frac{1}{2}\right)\right)^{2j},
\end{equation}
and for $m\in \mathbb{N},\>\>\>k\in \mathbb{Z}\setminus \{0\},$
\begin{equation}\label{B}
(-1)^{k+1} \rm sinc ^{(2m)}(-k)=\frac{(2m)!}{\pi k^{2m+1}}\sum_{j=0}^{m-1}\frac{(-1)^{j}(\pi k)^{2j+1}}{(2j+1)!}.
\end{equation}
We introduce notations
$$
A_{m,k}=(-1)^{k+1} \rm sinc ^{(2m-1)}\left(\frac{1}{2}-k\right),\>\>\>
k\in \mathbb{Z},\>\>\>m\in \mathbb{N},
$$
 and 
$$
B_{m,k}=(-1)^{k+1} \rm sinc ^{(2m)}(-k),\>\>\>m\in \mathbb{N},\>\>\>k\in \mathbb{Z}\setminus {0}
$$
and
$$
B_{m,0}=(-1)^{m+1} \frac{\pi^{2m}}{2m+1},\>\>\>m\in \mathbb{N}.
$$
We notice that the following relations hold
\begin{equation}\label{relations}
\left(\frac{\sigma}{\pi}\right)^{2m-1}\sum_{k\in \mathbb{Z}}\left|A_{m,k}\right|=\sigma^{2m-1},\>\>\>\>\>
 \left(\frac{\sigma}{\pi}\right)^{2m}\sum_{k\in \mathbb{Z}}\left|B_{m,k}\right|=\sigma^{2m}.
 \end{equation}
 We will also need the relation 
\begin{equation}\label{0}
\sum_{k\in \mathbb{Z}} sinc^{(m)}(x-k)=0,\>\>m\in \mathbb{N}.
\end{equation}
 To justify it we note that the Fourier transform of $ sinc\>x$ is the characteristic function of the interval $(-\pi, \pi)$. 
 By the Fourier inversion formula one has
 $$
sinc^{(m)}(x)=\frac{1}{2\pi}\int_{-\pi}^{\pi}(i\xi)^{m}e^{i\xi x}d\xi,
$$
which implies 
$$
\sum_{k\in  \mathbb{Z}}sinc^{(m)}(x-k)e^{i k \xi}=\sum_{k\in \mathbb{Z}}\left(\frac{1}{2\pi}\int_{-\pi}^{\pi}(i\xi)^{m}e^{-i(k-x)\xi}d\xi\right).
$$
 It is not difficult to check that the right hand side here is $(i\xi)^{m}e^{i x \xi}$. Thus we obtain the formula
 $$
 \sum_{k\in  \mathbb{Z}}sinc^{(m)}(x-k)e^{i k \xi}=(i\xi)^{m}e^{i x \xi},
 $$
 which for $\xi=0$ gives the relation (\ref{0}).

 In the interesting papers \cite{BSS-2} and \cite{BSS-3} among other important results the Boas formula (\ref{Boas11}) was extended  to higher order of derivatives.

\begin{theorem}\label{H-R-B-1}

For $f\in B_{\sigma}^{\infty},\>\>\>\sigma>0,$ the following formulas hold 
\begin{equation}\label{Butzer1}
f^{(2m-1)}(x)=\left(\frac{\sigma}{\pi}\right)^{2m-1}\sum_{k\in \mathbb{Z}}(-1)^{k+1}A_{m,k}f\left(x+\frac{\pi}{\sigma}(k-\frac{1}{2})\right), \>\>\>m\in \mathbb{N},
\end{equation}
\begin{equation}\label{Butzer2}
f^{(2m)}(x)=\left(\frac{\sigma}{\pi}\right)^{2m}\sum_{k\in \mathbb{Z}}(-1)^{k+1}B_{m,k}f\left(x+\frac{\pi}{\sigma}k\right), \>\>\>m\in \mathbb{N}.
\end{equation}
If $f\in B_{\sigma}^{p},\>\>1\leq p\leq \infty,$ the the series on the right converge in the norm of $L_{p}(\mathbb{R})$.

\end{theorem}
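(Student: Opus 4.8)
The plan is to imitate Boas's argument for the first derivative reproduced above (the proof of (\ref{Boas11}) via (\ref{it})), but with the monomial $(i\xi)^{2m-1}$, resp.\ $(i\xi)^{2m}$, in place of $i\xi$. Fix $m\in\mathbb{N}$ and $\sigma>0$. On $(-\sigma,\sigma)$ I would expand $(i\xi)^{2m-1}$ with respect to the orthogonal basis $\{e^{i\pi(k-1/2)\xi/\sigma}\}_{k\in\mathbb{Z}}$ of ``half-integer'' exponentials — these are the natural choice because the odd function $\xi\mapsto(i\xi)^{2m-1}$ extends across $\pm\sigma$ as a $4\sigma$-antiperiodic \emph{continuous} function, which forces the coefficients to decay — and expand $(i\xi)^{2m}$ with respect to the ordinary basis $\{e^{i\pi k\xi/\sigma}\}_{k\in\mathbb{Z}}$. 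The change of variable $\xi=(\sigma/\pi)u$ together with the integral representation $\mathrm{sinc}^{(n)}(x)=\frac{1}{2\pi}\int_{-\pi}^{\pi}(iu)^{n}e^{iux}\,du$ from Section~\ref{Higher order 2} identifies the $k$-th coefficient of $(i\xi)^{2m-1}$ as $(\sigma/\pi)^{2m-1}\,\mathrm{sinc}^{(2m-1)}(1/2-k)=(\sigma/\pi)^{2m-1}(-1)^{k+1}A_{m,k}$ and the $k$-th coefficient of $(i\xi)^{2m}$ as $(\sigma/\pi)^{2m}\,\mathrm{sinc}^{(2m)}(-k)=(\sigma/\pi)^{2m}(-1)^{k+1}B_{m,k}$; for $k=0$ the value $\mathrm{sinc}^{(2m)}(0)=(-1)^{m}\pi^{2m}/(2m+1)$ recorded in Section~\ref{Higher order 2} matches the normalization chosen for $B_{m,0}$. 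By (\ref{A}), (\ref{B}) and (\ref{relations}) one has $A_{m,k},B_{m,k}=O(k^{-2})$ with $\sum_{k}|A_{m,k}|,\sum_{k}|B_{m,k}|<\infty$, so both series converge absolutely and uniformly on $\mathbb{R}$, hence uniformly to $(i\xi)^{2m-1}$, resp.\ $(i\xi)^{2m}$, on the closed interval $[-\sigma,\sigma]$.

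Next I would substitute these expansions into $f^{(n)}(x)=\mathcal{F}^{-1}\!\big((i\xi)^{n}\mathcal{F}f(\xi)\big)(x)$, exactly as in the Boas proof, and use the translation rule $\mathcal{F}^{-1}\!\big(e^{ia\xi}\mathcal{F}f\big)(x)=f(x+a)$ with $a=\frac{\pi}{\sigma}(k-1/2)$ in the odd case and $a=\frac{\pi}{\sigma}k$ in the even case. Since $\mathcal{F}f$ is supported in $[-\sigma,\sigma]$ and the expansion converges uniformly there, the sum exchanges with $\mathcal{F}^{-1}$, yielding (\ref{Butzer1}) and (\ref{Butzer2}). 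This interchange is immediate when $f\in B_{\sigma}^{2}$ (then $\mathcal{F}f\in L^{1}(-\sigma,\sigma)$ and dominated convergence applies to the truncated sums); for $f\in B_{\sigma}^{\infty}$ one argues in $\mathcal{S}'(\mathbb{R})$: the spatial partial sums of the right-hand side converge uniformly, hence in $\mathcal{S}'$, so their Fourier transforms converge in $\mathcal{S}'$ to a distribution that one must identify with $(i\xi)^{n}\mathcal{F}f$ — the factors agreeing on $[-\sigma,\sigma]\supset\mathrm{supp}\,\mathcal{F}f$ — this identification being the delicate point discussed below.

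For the $L_p$ assertion with $f\in B_{\sigma}^{p}\subset B_{\sigma}^{\infty}$ (the embeddings being those of the Corollary), I would use only the summability of the coefficients and the translation invariance of $\|\cdot\|_{p}$: if $S_{N}$ denotes the $N$-th symmetric partial sum of the right-hand side of (\ref{Butzer1}), then $\|f^{(2m-1)}-S_{N}\|_{p}\le(\sigma/\pi)^{2m-1}\big(\sum_{|k|>N}|A_{m,k}|\big)\|f\|_{p}\to 0$, and likewise for (\ref{Butzer2}) with $B_{m,k}$; letting $N\to\infty$ and invoking (\ref{relations}) one recovers, as a by-product, the Bernstein inequality $\|f^{(r)}\|_{p}\le\sigma^{r}\|f\|_{p}$ generalizing (\ref{Bern-1}). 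I expect the single genuinely delicate step to be the termwise inverse Fourier transform in the case $p=\infty$: there $\mathcal{F}f$ may carry mass exactly at $\pm\sigma$, which is precisely where the periodized monomial is continuous but not $C^{1}$, so the distributional identity $(i\xi)^{n}\mathcal{F}f=\lim_{N}g_{N}(\xi)\mathcal{F}f$ (with $g_{N}$ the truncated Fourier series) cannot be read off from pointwise equality of symbols and must instead be deduced from the $\mathcal{S}'$-convergence of the partial sums noted in the previous paragraph.
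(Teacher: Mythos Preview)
Your approach is genuinely different from the paper's. The paper never touches the Fourier side for $p=\infty$; instead it takes $f\in B_\pi^\infty$, forms the auxiliary function $f_1(x)=(f(x)-f(0))/x\in B_\pi^2$, writes $f^{(m)}=(xf_1)^{(m)}=mf_1^{(m-1)}+xf_1^{(m)}$, applies the $L^2$ sampling theorem (Theorem~\ref{sampling1}) to each term, and evaluates at $x=1/2$ for odd order (resp.\ $x=0$ for even order), using the algebraic identity $(2m-1)\,\mathrm{sinc}^{(2m-2)}(1/2-k)=(k-1/2)\,\mathrm{sinc}^{(2m-1)}(1/2-k)$ together with (\ref{0}). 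A final shift/rescale gives the general $x$ and general $\sigma$. This route stays inside $B^2$ and never confronts endpoint multipliers.

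The gap in your argument is exactly the step you flag but do not close. Uniform convergence of the spatial partial sums $S_N$ gives $g_N\,\mathcal{F}f\to\mathcal{F}S$ in $\mathcal{S}'$; it does \emph{not} identify $\mathcal{F}S$ with $(i\xi)^n\mathcal{F}f$. For that you still need $(g_N-(i\xi)^n)\mathcal{F}f\to 0$, which requires controlling $g_N-(i\xi)^n$ in $C^K$ on a \emph{neighbourhood} of $[-\sigma,\sigma]$, $K$ being the order of the compactly supported distribution $\mathcal{F}f$. But on $(\sigma,\sigma+\varepsilon)$ the partial sums converge to the periodized monomial, not to $(i\xi)^n$, so $g_N-(i\xi)^n\not\to 0$ there even in $C^0$; your appeal to ``$\mathcal{S}'$-convergence of the partial sums'' is therefore circular. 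The cleanest repair within your framework is to first run the multiplier argument with an oversampled parameter $\sigma'>\sigma$: then $\mathrm{supp}\,\mathcal{F}f\subset[-\sigma,\sigma]\subset(-\sigma',\sigma')$, on which compact set $g_N\to(i\xi)^n$ in $C^\infty$, and the identification is rigorous; letting $\sigma'\downarrow\sigma$ and using $\sum_k|A_{m,k}|,\sum_k|B_{m,k}|<\infty$ together with continuity of $f$ (dominated convergence in $k$) recovers the formula at $\sigma$. Alternatively, borrow the paper's $f_1$-trick.
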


\begin{proof}
Consider $f\in B_{\pi}^{\infty}$ and define $f_{1}\in B_{\pi}^{2}$ as 
$$
f_{1}(x)=\frac{f(x)-f(0)}{x},\>\>x\in \mathbb{R}\setminus \{0\},
$$
and $f_{1}(0)=f^{'}(0)$.
One has for $x\in \mathbb{R}, \>m\in \mathbb{N},$
$$
f^{(m)}(x)=\left( x f_{1}(x)\right)^{(m)}=mf^{m-1}_{1}(x)+x f_{1}^{m}(x).
$$
Because  $f_{1}\in B_{\pi}^{2}$ we can apply Theorem \ref{sampling1} to the terms on the right side to get 

\begin{equation}\label{fla deriv}
f^{(m)}(x)=m\sum_{k\in \mathbb{Z}}f_{1}(k) sinc^{(m-1)}(x-k)+x\sum_{k}f_{1}(k)sinc^{(m)}(x-k),\>m\in \mathbb{N}.
\end{equation}

Next, for $x=1/2$,  changing $m$ to $2m-1$ and applying 
$$
(2m-1)sinc^{(2m-2)}\left(\frac{1}{2}-k\right)=\left(k-\frac{1}{2}\right)sinc^{(2m-1)}\left(\frac{1}{2}-k\right),\>k\in \mathbb{Z},
$$
which follows from the formula (\ref{formula}), 
we obtain
$$
f^{(2m-1)}\left(\frac{1}{2}\right)=(2m-1)\sum_{k\in \mathbb{Z}} f_{1}(k)sinc^{(2m-2)}\left(\frac{1}{2}-k\right)+\frac{1}{2}\sum f_{1}(k)sinc^{(2m-1)}\left(\frac{1}{2}-k\right)=
$$
$$
\sum f_{1}(k)\left(k-\frac{1}{2}\right) sinc^{(2m-1)}\left(\frac{1}{2}-k\right)+\frac{1}{2}\sum f_{1}(k)sinc^{(2m-1)}\left(\frac{1}{2}-k\right)=
$$
$$
\sum_{k\in \mathbb{Z}}\left(f(k)-f(0)\right) sinc ^{(2m-1)}\left(\frac{1}{2}-k\right).
$$
By using the formula (\ref{0})
we finally have for $m\in \mathbb{N}$:
\begin{equation}\label{odd}
f^{(2m-1)}\left(\frac{1}{2}\right)=\sum_{k\in \mathbb{Z}}f(k)sinc ^{(2m-1)}\left(\frac{1}{2}-k\right)=\sum_{k\in \mathbb{Z}}f(k)(-1)^{k+1}A_{m,k}.
\end{equation}
In order to finish the proof for odd derivatives, we pick a function $f\in B_{\sigma}^{\infty},\>\sigma>0,$
and apply (\ref{odd}) to the function
$$
\tau\mapsto f\left(\frac{\pi}{\sigma}\tau+x-\frac{\pi}{2\sigma}\right).
$$
In the case of even derivatives we start with (\ref{fla deriv}), replace $m$ by $2m$ and set $x=0$. 
One has
\begin{equation}\label{sinc}
2m \>sinc ^{(2m-1)}(-k)=k \>sinc^{(2m)}(-k),
\end{equation}
and together with (\ref{0}) it gives
$$
f^{(2m)}(0)=2m\sum f_{1}(k) sinc^{(2m-1)}(-k)=\sum k f_{1}(x) sinc^{(2m)}(-k)=
$$
$$
\sum\left(f(k)-f(0)\right) sinc^{(2m)}(-k)=\sum_{k}f(k)(-1)^{k+1}B_{m,k}, \>\>m\in \mathbb{N}.
$$
To complete the proof one has to apply the last equation to the function 
$$
\tau \mapsto  f\left(\frac{\pi}{\sigma}\tau+x\right).
$$
Theorem is proven.

\end{proof}

Theorem \ref{H-R-B-1} and relations (\ref{relations}) along with fact that 
 the  norm of $L_{p}(\mathbb{R})$ is invariant with respect to translations 
 provide a proof of the  Bernstein inequality for functions in $B_{\sigma}^{p},\>1\leq p\leq \infty,$ 
$$
\left\|f^{(n)}\right\|_p\leq \sigma^n \left\|f\right\|_p, \quad \mbox{ for all } n\in \mathbb{N}.
$$

\section{Higher order interpolation formulas with coefficients of order $O(k^{-3})$.}\label{Higher order 3}

\begin{theorem} For every $f\in B_{\sigma}^{\infty},\>\>\>\sigma>0 $ and $m\in \mathbb{N}$ the following formulas hold
\begin{equation}\label{Butzer10}
f^{(2m)}(t)=(-1)^{m}\sigma^{2m}+ \frac{2m\sigma^{2m}}{\pi^{2m}}\sum_{k}(-1)^{k+1}\frac{A_{m,k}}{k-1/2}f\left(t+\frac{\pi}{\sigma} \left(k-1/2\right)\right).
\end{equation}

\begin{equation}\label{Butzer20}
f^{(2m+1)}(t)=
$$
$$
-\frac{(2m+1)\sigma^{2m}}{\pi^{2m}}
B_{m,0}f^{'}(t)+\frac{(2m+1)\sigma^{2m+1}}{\pi^{2m+1}}\sum_{k\neq 0}(-1)^{k+1}\frac{B_{m,k}}{k}f(t+\pi k/\sigma).
\end{equation}

\end{theorem}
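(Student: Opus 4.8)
The plan is to run, one derivative deeper, the mechanism behind Theorem~\ref{H-R-B-1}. As there, it suffices to treat the case $\sigma=\pi$ at a fixed base point: once one has, for every $g\in B_\pi^\infty$, the identity at $t=0$ for the odd-order formula and the identity at $t=1/2$ for the even-order formula, the general statements follow by applying them to $\tau\mapsto f(\frac{\pi}{\sigma}\tau+t)$ and $\tau\mapsto f(\frac{\pi}{\sigma}\tau+t-\frac{\pi}{2\sigma})$, exactly as at the end of the proof of Theorem~\ref{H-R-B-1}. The one new device is to divide out a zero and then feed the \emph{Boas formulas themselves} (not the plain sampling theorem) into the quotient. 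Given $g\in B_\pi^\infty$ and $a\in\{0,1/2\}$, set $h(x)=(g(x)-g(a))/(x-a)$ with $h(a)=g'(a)$. As in the proof of Theorem~\ref{H-R-B-1}, $h$ is entire of exponential type $\le\pi$, and since $|h(x)|\le 2\|g\|_\infty/|x-a|$ on $\mathbb{R}$ it is square-integrable; hence $h\in B_\pi^2\subset B_\pi^\infty$. From $g(x)=(x-a)h(x)+g(a)$ and the Leibniz rule, $g^{(n)}(a)=n\,h^{(n-1)}(a)$ for all $n\in\mathbb{N}$.

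For (\ref{Butzer20}) I would take $a=0$, so that $g^{(2m+1)}(0)=(2m+1)h^{(2m)}(0)$, and apply to $h$ the even-order identity $\phi^{(2m)}(0)=\sum_{k\in\mathbb{Z}}(-1)^{k+1}B_{m,k}\,\phi(k)$ that is established inside the proof of Theorem~\ref{H-R-B-1} (its $k=0$ coefficient being $\mathrm{sinc}^{(2m)}(0)=(-1)^m\pi^{2m}/(2m+1)$, and $(-1)^{k+1}B_{m,k}=\mathrm{sinc}^{(2m)}(-k)$ for $k\neq 0$). Substituting $h(0)=g'(0)$ and $h(k)=(g(k)-g(0))/k$, the part carrying $g(0)$ equals $-g(0)\sum_{k\neq 0}\mathrm{sinc}^{(2m)}(-k)/k$, which vanishes because $\mathrm{sinc}^{(2m)}$ is even and so the summand is odd in $k$; what is left is $(2m+1)\mathrm{sinc}^{(2m)}(0)\,g'(0)+(2m+1)\sum_{k\neq 0}(-1)^{k+1}\frac{B_{m,k}}{k}\,g(k)$. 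Undoing the rescaling, and using $(2m+1)\mathrm{sinc}^{(2m)}(0)=-(2m+1)B_{m,0}=(-1)^m\pi^{2m}$, gives precisely (\ref{Butzer20}).

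For (\ref{Butzer10}) I would take $a=1/2$, so that $g^{(2m)}(1/2)=2m\,h^{(2m-1)}(1/2)$, and apply the identity (\ref{odd}) to $h$: $h^{(2m-1)}(1/2)=\sum_{k}\mathrm{sinc}^{(2m-1)}(1/2-k)\,h(k)$. Inserting $h(k)=(g(k)-g(1/2))/(k-1/2)$ splits this into $\sum_k\mathrm{sinc}^{(2m-1)}(1/2-k)\,g(k)/(k-1/2)$ minus $g(1/2)\,c_m$, where $c_m=\sum_k\mathrm{sinc}^{(2m-1)}(1/2-k)/(k-1/2)$. Here, in contrast to the even-order case, $c_m\neq 0$: under $k\mapsto 1-k$ both the odd function $\mathrm{sinc}^{(2m-1)}$ in the numerator and the factor $k-1/2$ in the denominator change sign, so the summand is unchanged and the series does not collapse. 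To pin down $c_m$ I would feed the identity just obtained --- valid for every $g\in B_\pi^\infty$ --- the test function $g(x)=\sin\pi x\in B_\pi^\infty$: there $g^{(2m)}(1/2)=(-1)^m\pi^{2m}$ while every sample $g(k)=\sin\pi k$ vanishes, so $(-1)^m\pi^{2m}=2m(-g(1/2)c_m)=-2mc_m$, i.e.\ $c_m=(-1)^{m+1}\pi^{2m}/(2m)$, whence the term coming from $g(1/2)$ is $2m(-g(1/2)c_m)=(-1)^m\pi^{2m}g(1/2)$. Rewriting $\mathrm{sinc}^{(2m-1)}(1/2-k)=(-1)^{k+1}A_{m,k}$ and undoing the rescaling then produces (\ref{Butzer10}), its first term being $(-1)^m\sigma^{2m}f(t)$; that is, the constant displayed in (\ref{Butzer10}) should be read as a coefficient of $f(t)$, in agreement with the $\sin$ test.

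The $L^p$-convergence claim ($1\le p\le\infty$) is then automatic, exactly as in Theorem~\ref{H-R-B-1}: by (\ref{A}) and (\ref{B}) the coefficients $A_{m,k}$ and $B_{m,k}$ are $O(k^{-2})$, hence $A_{m,k}/(k-1/2)$ and $B_{m,k}/k$ are $O(k^{-3})$ and absolutely summable, and since $\|f(\cdot+c)\|_p=\|f\|_p$ the series of translates converges absolutely in $L^p(\mathbb{R})$. The step I expect to be the real obstacle is precisely the non-vanishing of $c_m$: unlike in Theorem~\ref{H-R-B-1} and unlike in (\ref{Butzer20}), the value subtracted off when forming $h$ does not cancel and must be computed, and it is this surviving correction that compensates the failure of the bare sampling series on functions such as $\sin$ (see the Remark after Theorem~\ref{sampling1}). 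Verifying that $h\in B_\pi^2$ (exponential type $\le\pi$ together with $O(1/|x|)$ decay on the line, then Paley--Wiener) is routine and is the same step already invoked for Theorem~\ref{H-R-B-1}.
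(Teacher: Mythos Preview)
Your proposal is correct and follows essentially the same route as the paper: form the divided difference $(f-f(a))/(x-a)$, apply the Boas formulas of Theorem~\ref{H-R-B-1} to the quotient one order down, and pin the surviving constant with an explicit test function. The only cosmetic differences are that for (\ref{Butzer10}) you center at $a=1/2$ and test with $\sin\pi x$ while the paper centers at $a=0$ and tests with $\cos\pi x$, and for (\ref{Butzer20}) you dispose of the $g(0)$-term by parity while the paper tests with $\mathrm{sinc}$; you also correctly observe that the lone term $(-1)^m\sigma^{2m}$ in (\ref{Butzer10}) should carry a factor $f(t)$.
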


\begin{proof}
For a function $f\in B_{\sigma}^{\infty}$ the function 

$$
f_{1}(x)=\begin{cases}
                \frac{f(x)-f(0)}{x},  &\text {x $\neq 0$}\\
               f^{'}(0)                     ,& \text {x$=0$}.
                \end{cases}
                $$
belongs to $B_{\sigma}^{\infty}$. By comparison of Taylor series of $f$ and $f^{'}$ at zero we have
$$
\frac{f_{1}^{(r)}(0)}{r!}=\frac{f^{(r+1)}(0)}{(r+1)!},\>\>\>r\in \mathbb{N}.
$$
By using (\ref{Butzer2}) to $f_{1}$ at zero, we obtain
$$
f^{(2m)}(0)=\frac{2m\sigma^{2m}}{\pi^{2m}}\sum_{k}(-1)^{k+1}A_{m,k}\frac{f\left(\frac{\pi}{\sigma}(k-1/2)\right)-f(0)}{k-1/2}.
$$
For $f(x)=\cos \pi x$ this formula holds with $\sigma=\pi$ and gives
$$
(-1)^{m}\pi^{2m}=-2m\sum_{k}(-1)^{k+1}\frac{A_{m,k}}{k-1/2}.
$$
After all 
$$
f^{(2m)}(0)=(-1)^{m}\sigma^{2m}+ \frac{2m\sigma^{2m}}{\pi^{2m}}\sum_{k}(-1)^{k+1}\frac{A_{m,k}}{k-1/2}f\left(\frac{\pi}{\sigma} \left(k-1/2\right)\right).
$$
Applying this formula to the function $x\mapsto f(x+t)$ and setting $x$ to zero gives (\ref{Butzer10}).
Similarly, using (\ref{Butzer1}) for  $f_{1}$ at  $x=0, $ we obtain
$$
f^{(2m+1)}(0)=
$$
$$
-\frac{(2m+1)\sigma^{2m}}{\pi^{2m}}
B_{m,0}f^{'}(0)+\frac{(2m+1)\sigma^{2m+1}}{\pi^{2m+1}}\sum_{k\neq 0}(-1)^{k+1}\frac{B_{m,k}}{k}\frac{f(\pi k/\sigma)-f(0)}{k}.
$$
For the $sinc$ function and $\sigma=\pi$ it gives
$$
(2m+1)\sum_{k\neq 0}(-1)^{k+1}\frac{B_{m,k}}{k}=0,
$$
and then
$$
f^{(2m+1)}(0)=
$$
$$
-\frac{(2m+1)\sigma^{2m}}{\pi^{2m}}
B_{m,0}f^{'}(0)+\frac{(2m+1)\sigma^{2m+1}}{\pi^{2m+1}}\sum_{k\neq 0}(-1)^{k+1}\frac{B_{m,k}}{k}f(\pi k/\sigma).
$$
By applying this formula to the shifted function $x\mapsto f(x+t)$ at $x=0$, we obtain (\ref{Butzer20}).
Theorem is proven.
\end{proof}

\section{Bernstein vectors  in Banach spaces}\label{Bernstein-vectors}

We assume that $D$ is the generator of a one-parameter group of
isometries $e^{ tD}$ in a Banach space $E$ with the norm $\|\cdot
\|$ (for precise definitions see \cite{BB}, \cite{K}). The notations $\mathcal{D}^{k}$ will be used for  the domain of $D^{k}$, and notation $\mathcal{D}^{\infty}$ for $\bigcap_{k\in \mathbb{N}}\mathcal{D}^{k}$.

\begin{definition}
The  subspace of exponential vectors $\mathbf{E}_{\sigma}(D), \>\>\sigma\geq 0,$ is defined as  a set of all vectors $f$ in  $\mathcal{D}^{\infty}$  for which there exists a constant $C(f,\sigma)>0$ such that 
\begin{equation}\label{Exp}
\|D^{k}f\|\leq C(f,\sigma)\sigma^{k}, \>\>k\in \mathbb{N}.
\end{equation}
 
\end{definition}
Note, that every $\mathbf{E}_{\sigma}(D)$ is clearly a linear subspace of $E$. What is really  important is the fact that union of all $\mathbf{E}_{\sigma}(D)$ is dense in $E$ (Corollary \ref{Density}). 

\begin{remark}\label{rem}
If $D$ generates a strongly continuous bounded semigroup (but not a group) then the set  $\bigcup_{\sigma\geq 0}\mathbf{E}_{\sigma}(D)$ may not be  dense in $E$. 

Indeed,  consider a strongly continuous bounded semigroup $T(t)$ in $L_{2}(0,\infty)$ defined for every $f\in L_{2}(0,\infty)$ as $T(t)f(x)=f(x-t),$ if $\>x\geq t$ and $T(t)f(x)=0,$ if $\>\>0\leq x<t$.   Inequality (\ref{Exp}) implies that if $ f\in \mathbf{E}_{\sigma}(D)$ then for any $g\in L_{2}(0,\infty)$ the function $\left<T(t)f,\>g\right>$ is analytic in $t$. Thus if $g$ has compact support then $\left<T(t)f,\>g\right>$ is zero for  all  $t$ which implies that $f$ is zero. In other words in this case every space $\mathbf{E}_{\sigma}(D)$ is trivial. 

\end{remark}

\begin{definition}
The Bernstein subspace
 $\mathbf{B}_{\sigma}(D), \>\>\sigma\geq 0,$ is defined as  a set of all vectors $f$ in $E$ 
 which belong to $\mathcal{D}^{\infty}$  and for which 
\begin{equation}\label{Bernstein}
\|D^{k}f\|\leq \sigma^{k}\|f\|, \>\>k\in \mathbb{N}.
\end{equation}
\end{definition}

At this point it is not even clear that $\mathbf{B}_{\sigma}(D), \>\>\sigma\geq 0,$ is a subspace.
\begin{lemma}[\cite{Pes00}]\label{basic}
  Let $D$ be the generator of a one-parameter group of isometries 
$e^{tD }$ in a Banach space $E$.  If for
some $f\in E$ there exists an $\sigma
>0$ such
that the quantity

$$
\sup_{k\in N} \|D^{k}f\|\sigma ^{-k}=R(f,\sigma)
$$
 is finite, then $R(f,\sigma)\leq
\|f\|.$
\end{lemma}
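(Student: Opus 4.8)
The plan is to pass from the vector $f$ to the $E$-valued trajectory $t\mapsto e^{tD}f$, to recognize it as an entire function of exponential type $\sigma$, and then to reduce the desired estimate to the classical Bernstein inequality for $B_\sigma^\infty$ by testing against bounded linear functionals.

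First I would use the hypothesis $\|D^kf\|\le R(f,\sigma)\,\sigma^k$ to form the series
$$
F(z)=\sum_{k=0}^{\infty}\frac{z^k}{k!}\,D^kf ,
$$
which then converges absolutely in $E$ for every $z\in\mathbb{C}$ and satisfies $\|F(z)\|\le R(f,\sigma)\,e^{\sigma|z|}$; thus $F$ is an $E$-valued entire function of exponential type at most $\sigma$. Exploiting the closedness of $D$ one checks that $F$ solves $u'=Du$, $u(0)=f$, so by uniqueness of the solution of this abstract Cauchy problem $F(t)=e^{tD}f$ for real $t$, and since $e^{tD}$ is an isometry this gives $\|F(t)\|=\|f\|$ for all $t\in\mathbb{R}$. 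Differentiating the power series term by term yields $F^{(k)}(0)=D^kf$.

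Next I would fix a functional $\varphi\in E^{*}$ with $\|\varphi\|\le1$ and set $g(z)=\varphi\bigl(F(z)\bigr)$. Then $g$ is a scalar entire function with $|g(z)|\le\|F(z)\|\le R(f,\sigma)e^{\sigma|z|}$ and $|g(t)|\le\|F(t)\|=\|f\|$ for real $t$, so by the Paley--Wiener theorem $g\in B_\sigma^\infty$ with $\|g\|_\infty\le\|f\|$. Since $\varphi$ is linear and continuous it commutes with the series, so $g(z)=\sum_k\frac{z^k}{k!}\varphi(D^kf)$ and hence $g^{(k)}(0)=\varphi(D^kf)$. I would then apply the iterated Bernstein inequality for $B_\sigma^\infty$ — established above as a consequence of the Boas formula (\ref{Boas11}), cf.\ (\ref{Bern-1}) — to get $|\varphi(D^kf)|=|g^{(k)}(0)|\le\|g^{(k)}\|_\infty\le\sigma^k\|g\|_\infty\le\sigma^k\|f\|$. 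Finally, taking the supremum over all such $\varphi$ and using Hahn--Banach, $\|D^kf\|=\sup_{\|\varphi\|\le1}|\varphi(D^kf)|\le\sigma^k\|f\|$ for every $k$, i.e.\ $R(f,\sigma)\le\|f\|$.

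The step I expect to be the main (though essentially standard) obstacle is the first one: justifying rigorously that the finiteness of $\sup_k\|D^kf\|\sigma^{-k}$ turns $f$ into an entire analytic vector whose group orbit is represented by its Taylor series and whose successive derivatives at the origin recover the $D^kf$. Once this identification is secured, everything else is a routine scalarization combined with the one-dimensional Bernstein inequality already proved in the earlier sections, and the same argument works verbatim for any uniformly bounded $C_0$-group, yielding $R(f,\sigma)\le M\|f\|$ with $M=\sup_t\|e^{tD}\|$ (here $M=1$).
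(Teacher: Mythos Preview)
Your proposal is correct and follows essentially the same route as the paper: both arguments use the bound $\|D^kf\|\le R(f,\sigma)\sigma^k$ to recognize the Taylor series $\sum z^kD^kf/k!$ as an entire $E$-valued function of exponential type $\sigma$ coinciding with $e^{tD}f$ on the real line, then scalarize via a functional $h^*\in E^*$, apply the classical Bernstein inequality to the resulting $B_\sigma^\infty$ function, and finish with Hahn--Banach. Your version is somewhat more explicit about the identification $F(t)=e^{tD}f$ (via uniqueness for the abstract Cauchy problem) and names Paley--Wiener where the paper simply invokes Bernstein directly, but these are cosmetic differences.
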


\begin{proof}
  By assumption $\|D^{r}f\|\leq R(f,\sigma)\sigma ^{r}, r\in \mathbb{N}$.  Now for
any complex number $z$ we have
$$
 \left\|e^{zD}f\right\|=\left\|\sum ^{\infty}_{r=0}(z^{r}D^{r}g)/r!\right\|\leq R(f,\sigma) \sum
^{\infty}_{r=0}|z|^{r}\sigma^{r}/r!=R(f,\sigma)e^{|z|\sigma}.
$$
It implies that for any functional $h^{*}\in E^{*}$ the scalar function
$\left<e^{zD}f,h^{*}\right>$ is an entire function
 of exponential type $\sigma $ which is bounded on the real axis  $\mathbb{R}$
by the constant $\|h^{*}\| \|f\|$.
 An application of the classical Bernstein inequality gives

$$\left\|\left<e^{tD}D^{k}f,h^{*}\right>\right\|_{C(\mathbb{R})}=\left\|\left(\frac{d}{dt}\right)^{k}\left<e^{tD}f,h^{*}\right>\right\|_{
C(\mathbb{R})} \leq\sigma^{k}\|h^{*}\| \|f\|.$$
The last one gives for $t=0$

$$ \left|\left<D^{k}f,h^{*}\right>\right|\leq \sigma ^{k} \|h^{*}\| \|f\|.$$
Choosing $h^{*}$ such that $\|h^{*}\|=1$ and $\left<D^{k}f,h^{*}\right>=\|D^{k}f\|$ we obtain
the inequality $\|D^{k}f\|\leq
 \sigma ^{k} \|f\|, k\in N$, which gives

$$R(f,\sigma)=\sup _{k\in \mathbb{N} }(\sigma ^{-k}\|D^{k}f\|)\leq \|f\|.$$
Lemma is proved.
\end{proof}
This lemma and it's proof have important corollaries.
\begin{col}
For every $\sigma\geq 0$ one has 
$$
\mathbf{B}_{\sigma}(D)= \mathbf{E}_{\sigma}(D). 
$$
\end{col}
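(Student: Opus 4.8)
The plan is to show the two inclusions $\mathbf{E}_{\sigma}(D)\subseteq \mathbf{B}_{\sigma}(D)$ and $\mathbf{B}_{\sigma}(D)\subseteq \mathbf{E}_{\sigma}(D)$ separately, and both will follow essentially from Lemma \ref{basic} with essentially no additional work. The inclusion $\mathbf{B}_{\sigma}(D)\subseteq \mathbf{E}_{\sigma}(D)$ is immediate: if $f\in \mathbf{B}_{\sigma}(D)$, then $\|D^{k}f\|\leq \sigma^{k}\|f\|$ for all $k\in\mathbb{N}$, so the defining inequality (\ref{Exp}) for $\mathbf{E}_{\sigma}(D)$ holds with the constant $C(f,\sigma)=\|f\|$. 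No hypothesis beyond the definition is needed here.

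For the reverse inclusion $\mathbf{E}_{\sigma}(D)\subseteq \mathbf{B}_{\sigma}(D)$, suppose $f\in \mathbf{E}_{\sigma}(D)$, so $f\in\mathcal{D}^{\infty}$ and there is a constant $C(f,\sigma)>0$ with $\|D^{k}f\|\leq C(f,\sigma)\sigma^{k}$ for every $k\in\mathbb{N}$. Then the quantity $\sup_{k\in\mathbb{N}}\|D^{k}f\|\sigma^{-k}=R(f,\sigma)$ is finite (it is bounded by $C(f,\sigma)$), so Lemma \ref{basic} applies and gives $R(f,\sigma)\leq\|f\|$. Unwinding the definition of $R(f,\sigma)$, this says precisely that $\|D^{k}f\|\leq\sigma^{k}\|f\|$ for all $k\in\mathbb{N}$, i.e. $f\in\mathbf{B}_{\sigma}(D)$. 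Combining the two inclusions yields $\mathbf{B}_{\sigma}(D)=\mathbf{E}_{\sigma}(D)$ for every $\sigma\geq 0$; the case $\sigma=0$ forces $Df=0$ and both sides reduce to $\ker D$ (intersected with $\mathcal{D}^\infty$), so nothing special is needed there either.

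There is really no substantial obstacle: the whole content of the corollary is already packaged inside Lemma \ref{basic}, whose proof (passing to the entire function $\langle e^{zD}f,h^{*}\rangle$ of exponential type $\sigma$ and invoking the classical Bernstein inequality) does the work. The only point to be careful about is the direction of the argument — one must observe that finiteness of $R(f,\sigma)$, not the sharp bound, is what membership in $\mathbf{E}_{\sigma}(D)$ supplies, and then let the lemma upgrade that finiteness to the sharp inequality defining $\mathbf{B}_{\sigma}(D)$. As an immediate byproduct one also records that $\mathbf{B}_{\sigma}(D)$ is a linear subspace of $E$: since it equals $\mathbf{E}_{\sigma}(D)$, which is visibly linear (the defining condition (\ref{Exp}) is stable under scaling and, for $f,g\in\mathbf{E}_{\sigma}(D)$, $\|D^{k}(f+g)\|\leq (C(f,\sigma)+C(g,\sigma))\sigma^{k}$), the Bernstein subspace inherits linearity, which resolves the remark preceding Lemma \ref{basic} that this was "not even clear."
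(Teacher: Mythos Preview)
Your proof is correct and follows exactly the same approach as the paper: the inclusion $\mathbf{B}_{\sigma}(D)\subseteq\mathbf{E}_{\sigma}(D)$ is declared obvious, and the reverse inclusion is deduced directly from Lemma \ref{basic}. Your added remarks on the $\sigma=0$ case and on linearity of $\mathbf{B}_{\sigma}(D)$ are accurate elaborations that the paper either treats implicitly or records in a separate corollary.
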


\begin{proof}
The inclusion  $
\mathbf{B}_{\sigma}(D)\subset \mathbf{E}_{\sigma}(D) , \>\>\>\sigma\geq 0, 
$ is obvious. The opposite inclusion follows from the previous lemma. 
\end{proof}
The next one follows from the proof of the Lemma.

\begin{col}[\cite{Pes14}]\label{key-col}
  Let $D$ be a generator of  one-parameter group of isometries 
$e^{tD }$ in a Banach space $E$.  Then for every $f\in {\bf B}_{\sigma}(D),\>\sigma\geq 0,$ 
the following holds
\begin{enumerate}

\item For every functional $g\in E^{*}$ the the function of the real variable $t$ defined as $\langle e^{tD}f, g^{*}\rangle$ belongs to the regular Bernstein space $B_{\sigma}^{\infty}(\mathbb{R})$.

\item The abstract valued function $e^{tD}f$ is bounded on the real line and has extension to the complex plain as an entire function of the exponential type $\sigma$.

\end{enumerate}
\end{col}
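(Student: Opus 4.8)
The plan is to read everything off from the proof of Lemma~\ref{basic}, since $f\in\mathbf{B}_{\sigma}(D)$ means precisely that $\|D^{k}f\|\le\sigma^{k}\|f\|$ for all $k\in\mathbb{N}$, i.e.\ $f$ satisfies the hypothesis of that lemma with $R(f,\sigma)\le\|f\|$; thus every intermediate estimate obtained there is at our disposal. First I would record that the power series
$$
F(z)=\sum_{r=0}^{\infty}\frac{z^{r}D^{r}f}{r!}
$$
converges in $E$ for every $z\in\mathbb{C}$, because $\|z^{r}D^{r}f/r!\|\le\|f\|\,|z|^{r}\sigma^{r}/r!$; hence $F$ is an $E$-valued entire function with $\|F(z)\|\le\|f\|\,e^{\sigma|z|}$, i.e.\ of exponential type $\sigma$. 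For real $z=t$ the vector $f$ is an analytic vector for the group, so $F(t)=e^{tD}f$ (this is the very identity used at the start of the proof of Lemma~\ref{basic}). Combining this with the fact that each $e^{tD}$ is an isometry, so that $\|e^{tD}f\|=\|f\|$ for all $t\in\mathbb{R}$, gives assertion (2): the trajectory $t\mapsto e^{tD}f$ is bounded on $\mathbb{R}$ and $z\mapsto F(z)$ is its entire extension of exponential type $\sigma$.

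For assertion (1) I would fix $g\in E^{*}$ and put $\varphi(z)=\langle F(z),g\rangle$. As the composition of the $E$-valued entire function $F$ with a bounded linear functional, $\varphi$ is a scalar entire function, and $|\varphi(z)|\le\|g\|\,\|F(z)\|\le\|g\|\,\|f\|\,e^{\sigma|z|}$, so $\varphi$ has exponential type at most $\sigma$; on the real axis $|\varphi(t)|=|\langle e^{tD}f,g\rangle|\le\|g\|\,\|f\|$, so $\varphi|_{\mathbb{R}}\in L^{\infty}(\mathbb{R})\cap C^{\infty}(\mathbb{R})$. By the Paley--Wiener theorem recalled in Section~\ref{Bern-spaces} (in the form: an entire function of exponential type $\le\sigma$ that is bounded on $\mathbb{R}$ has distributional Fourier transform supported in $[-\sigma,\sigma]$), it follows that $\varphi|_{\mathbb{R}}=\langle e^{\cdot D}f,g\rangle\in B_{\sigma}^{\infty}(\mathbb{R})$, which is exactly assertion (1). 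The degenerate case $\sigma=0$ is trivial: then $D^{k}f=0$ for all $k\ge1$, so $e^{tD}f\equiv f$ and both statements are immediate.

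The only ingredient that is not a mechanical transcription of the proof of Lemma~\ref{basic} is the last one, namely identifying the class of entire functions of exponential type $\le\sigma$ that are bounded on $\mathbb{R}$ with the Bernstein space $B_{\sigma}^{\infty}$ as it was \emph{defined} in Section~\ref{Bern-spaces} (through the support of the distributional Fourier transform). Since this is precisely the classical Paley--Wiener characterization already quoted there, no new work is required, and I do not expect any genuine obstacle: the corollary is essentially the statement of what the proof of Lemma~\ref{basic} established, with the arbitrary functional $h^{*}$ there renamed $g$.
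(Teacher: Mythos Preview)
Your proposal is correct and matches the paper's approach exactly: the paper states only that the corollary ``follows from the proof of the Lemma,'' and what you have written is precisely an unpacking of that proof, extracting the entire extension $F(z)=\sum_{r\ge 0} z^{r}D^{r}f/r!$ with its exponential-type bound and then pairing with a functional. Your only additional remark, the appeal to the Paley--Wiener identification of bounded entire functions of exponential type $\sigma$ with $B_{\sigma}^{\infty}(\mathbb{R})$, is exactly the convention the paper itself uses throughout (see, e.g., item~(3) of Theorem~\ref{main}).
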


\begin{col}
Every $\mathbf{B}_{\sigma}(D)$ is a closed linear subspace of $E$ which is invariant under $D$.
\end{col}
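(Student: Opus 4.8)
The plan is to establish each of the three assertions — linearity, closedness, and $D$-invariance — using the characterization $\mathbf{B}_{\sigma}(D)=\mathbf{E}_{\sigma}(D)$ from the preceding corollary together with Lemma~\ref{basic}. The key conceptual point is that the a~priori constant $C(f,\sigma)$ in the definition of $\mathbf{E}_{\sigma}(D)$ can always be replaced by $\|f\|$; this self-improving feature is exactly what makes $\mathbf{B}_{\sigma}(D)$ a genuine subspace, which otherwise is not evident.

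First I would prove linearity. Given $f_1,f_2\in\mathbf{B}_{\sigma}(D)$ and scalars $\alpha_1,\alpha_2$, the triangle inequality gives $\|D^{k}(\alpha_1 f_1+\alpha_2 f_2)\|\leq |\alpha_1|\sigma^{k}\|f_1\|+|\alpha_2|\sigma^{k}\|f_2\|=\sigma^{k}\big(|\alpha_1|\|f_1\|+|\alpha_2|\|f_2\|\big)$, so $\alpha_1 f_1+\alpha_2 f_2\in\mathbf{E}_{\sigma}(D)$ with constant $C=|\alpha_1|\|f_1\|+|\alpha_2|\|f_2\|$; in particular $\alpha_1 f_1+\alpha_2 f_2\in\mathcal{D}^{\infty}$ and $\sup_k\|D^k(\alpha_1 f_1+\alpha_2 f_2)\|\sigma^{-k}$ is finite. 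Lemma~\ref{basic} then upgrades this to $\|D^{k}(\alpha_1 f_1+\alpha_2 f_2)\|\leq\sigma^{k}\|\alpha_1 f_1+\alpha_2 f_2\|$, i.e.\ $\alpha_1 f_1+\alpha_2 f_2\in\mathbf{B}_{\sigma}(D)$.

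Next, closedness. Let $f_n\in\mathbf{B}_{\sigma}(D)$ with $f_n\to f$ in $E$. For each fixed $k$ and any functional $h^{*}\in E^{*}$ with $\|h^{*}\|=1$ we have $|\langle D^{k}f_n,h^{*}\rangle|\leq\sigma^{k}\|f_n\|$, and the right side converges to $\sigma^{k}\|f\|$. The obstacle here — and the step I expect to be the main one — is that one must pass the operator $D^{k}$ through the limit, which requires knowing $f\in\mathcal{D}^{\infty}$ and $D^{k}f_n\to D^{k}f$; this does not follow from $f_n\to f$ alone for an unbounded operator. I would resolve it by going through the group: by Corollary~\ref{key-col}, for each $n$ the vector-valued map $t\mapsto e^{tD}f_n$ extends to an entire function of exponential type $\sigma$, bounded on $\mathbb{R}$ by $\|f_n\|$. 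For a fixed $h^{*}$, the scalar functions $\varphi_n(t)=\langle e^{tD}f_n,h^{*}\rangle$ lie in $B_{\sigma}^{\infty}(\mathbb{R})$ with $\|\varphi_n\|_{\infty}\leq\|h^{*}\|\,\|f_n\|$ and converge pointwise (indeed uniformly on $\mathbb{R}$, since $|\varphi_n(t)-\varphi_m(t)|\leq\|h^{*}\|\,\|f_n-f_m\|$) to $\varphi(t)=\langle e^{tD}f,h^{*}\rangle$; by the Bernstein inequality~\eqref{Bern-1} the derivatives converge as well, so $\varphi\in B_{\sigma}^{\infty}(\mathbb{R})$ and $|\varphi^{(k)}(0)|\leq\sigma^{k}\|h^{*}\|\,\|f\|$. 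Since $\varphi^{(k)}(0)$ is the limit of $\langle D^{k}f_n,h^{*}\rangle$ for every $h^{*}$, the sequence $(D^{k}f_n)$ is weakly Cauchy; using that $D^{k}$ is closed (a power of a group generator) together with $f_n\to f$, one identifies the limit and concludes $f\in\mathcal{D}^{k}$ with $D^{k}f$ equal to that limit. Taking $h^{*}$ norming $D^{k}f$ yields $\|D^{k}f\|\leq\sigma^{k}\|f\|$ for all $k$, so $f\in\mathbf{B}_{\sigma}(D)$.

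Finally, $D$-invariance. If $f\in\mathbf{B}_{\sigma}(D)$ then $Df\in\mathcal{D}^{\infty}$ and $\|D^{k}(Df)\|=\|D^{k+1}f\|\leq\sigma^{k+1}\|f\|=\sigma^{k}\cdot\sigma\|f\|$, so $Df\in\mathbf{E}_{\sigma}(D)$ with constant $\sigma\|f\|$; Lemma~\ref{basic} again upgrades this to $\|D^{k}(Df)\|\leq\sigma^{k}\|Df\|$, hence $Df\in\mathbf{B}_{\sigma}(D)$. This completes the proof. The only genuinely delicate point is the interchange of $D^{k}$ with the limit in the closedness argument, and routing it through the entire-function representation of the orbit $e^{tD}f$ supplied by Corollary~\ref{key-col} is what makes it clean.
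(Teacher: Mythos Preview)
Your linearity and $D$-invariance arguments are correct. In fact your invariance argument, going directly through $\mathbf{E}_{\sigma}(D)$ and Lemma~\ref{basic}, is arguably cleaner than the paper's, which instead writes $Df=\lim_{t\to 0}(e^{tD}f-f)/t$, observes that each difference quotient lies in $\mathbf{B}_{\sigma}(D)$, and then invokes the just-proved closedness.

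The closedness argument, however, has a genuine gap. You show that for every $h^{*}\in E^{*}$ the scalar sequence $\langle D^{k}f_n,h^{*}\rangle$ converges; this says precisely that $(D^{k}f_n)$ is weakly Cauchy. In a general Banach space (and the paper assumes only that) weakly Cauchy sequences need not be weakly convergent---the limit functional you build lives a~priori only in $E^{**}$. Without an element of $E$ to serve as the weak limit, you cannot invoke (weak) closedness of $D^{k}$ to conclude $f\in\mathcal{D}^{k}$. Your argument would go through if $E$ were reflexive, but not in general.

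The paper's fix is much simpler and uses only what you have already established. Since linearity is proved first, $f_n-f_m\in\mathbf{B}_{\sigma}(D)$, hence
\[
\|D^{k}f_n-D^{k}f_m\|=\|D^{k}(f_n-f_m)\|\leq\sigma^{k}\|f_n-f_m\|,
\]
so $(D^{k}f_n)$ is norm Cauchy in $E$. Now the ordinary closedness of $D^{k}$ gives $f\in\mathcal{D}^{k}$ and $D^{k}f=\lim_n D^{k}f_n$, and the Bernstein inequality passes to the limit. The detour through Corollary~\ref{key-col} and entire-function theory is unnecessary here.
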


\begin{proof}
Linearity of $\mathbf{B}_{\sigma}(D)$ follows from the previous theorem and linearity of $\mathbf{E}_{\sigma}(D)$. If $f_{j}$ converges in $E$ to $f$ then the sequence $\{f_{j}\}$ is fundamental in $E$ and the Bernstein inequality implies that $\{D^{k}f_{j}\},\>\>k\in \mathbb{N},$ is fundamental. since $D^{k},\>\>k\in \mathbb{N},$ is a closed operator it implies that $f$ belongs to the domain of $D^{k},\>\>k\in \mathbb{N},$ and the Bernstein inequality 
$$
\|D^{k}f\|\leq \sigma^{k}\|f\|
$$
holds for any $k\in \mathbb{N}$.
If $f\in \mathbf{B}_{\sigma}(D)$ then the right-hand side of the formula 
$$
Df=\lim_{t\rightarrow 0}\frac{e^{tD}f-f}{t}
$$
belongs to $\mathbf{B}_{\sigma}(D)$ and since $\mathbf{B}_{\sigma}(D)$ is closed the element $Df$ is also in $\mathbf{B}_{\sigma}(D)$.  Hence the corollary is proved.

\end{proof}

 \subsection{Density of the abstract Bernstein spaces}

 Let's introduce the operator
 $\>\>
 \Delta^{m}_{s}f=(I-e^{sD})^{m}f, \>\>m\in \mathbb{N}, 
 $
 and  the modulus of
continuity \cite{BB}
$$
\Omega_{m}(f,s)=\sup_{|\tau|\leq
s}\left\|\Delta^{m}_{\tau}f\right\|\label{dif}.
$$

\begin{theorem}\label{Density}
The set  $\bigcup_{\sigma\geq 0}\mathbf{B}_{\sigma}(D)$ is dense in $E$.
\end{theorem}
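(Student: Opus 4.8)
The plan is to produce, for an arbitrary $f \in E$, a sequence of vectors in $\bigcup_{\sigma \geq 0}\mathbf{B}_\sigma(D)$ converging to $f$, using the standard device of smoothing $f$ by integrating the group $e^{tD}$ against a kernel whose Fourier transform is compactly supported. The point of departure is the observation (already available via Corollary \ref{key-col}) that membership in $\mathbf{B}_\sigma(D)$ is detected by applying functionals: $f \in \mathbf{B}_\sigma(D)$ essentially when $\langle e^{tD}f, g^*\rangle \in B_\sigma^\infty(\mathbb{R})$ for every $g^* \in E^*$, with the uniform Bernstein estimate $\|D^k f\|\le \sigma^k\|f\|$ following from Lemma \ref{basic}.

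First I would fix a function $\psi \in S(\mathbb{R})$ with $\mathcal{F}\psi$ supported in $[-\sigma,\sigma]$ and $\int_{\mathbb{R}}\psi(t)\,dt = \sqrt{2\pi}$ (so $\mathcal{F}\psi(0)=1$); concretely one may take a dilate of a fixed bump, say $\psi = \psi_\sigma$ obtained by rescaling so that as $\sigma \to \infty$ the kernels $\psi_\sigma$ form an approximate identity. Then define
\begin{equation}\label{mollify}
f_\sigma = \frac{1}{\sqrt{2\pi}}\int_{\mathbb{R}} \psi_\sigma(t)\, e^{tD} f \, dt,
\end{equation}
a Bochner integral which converges because $\|e^{tD}f\| = \|f\|$ (isometry) and $\psi_\sigma \in L^1(\mathbb{R})$. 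Next I would check that $f_\sigma \in \mathbf{B}_\sigma(D)$: differentiating under the integral sign $k$ times against the group, integrating by parts to move the derivatives onto $\psi_\sigma$, and testing against $g^* \in E^*$, one sees that $\langle e^{sD}f_\sigma, g^*\rangle$ is, up to normalization, the convolution of $\psi_\sigma$ with the bounded function $s \mapsto \langle e^{sD}f, g^*\rangle$; its distributional Fourier transform is then $\mathcal{F}\psi_\sigma$ times a distribution, hence supported in $[-\sigma,\sigma]$, so $\langle e^{sD}f_\sigma, g^*\rangle \in B_\sigma^\infty(\mathbb{R})$. An application of Lemma \ref{basic} (through the mechanism of its proof, exactly as in Corollary \ref{key-col}) then gives $\|D^k f_\sigma\| \le \sigma^k \|f_\sigma\|$, i.e. $f_\sigma \in \mathbf{B}_\sigma(D)$.

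Finally I would show $f_\sigma \to f$ in $E$ as $\sigma \to \infty$. Writing $f_\sigma - f = \frac{1}{\sqrt{2\pi}}\int \psi_\sigma(t)(e^{tD}f - f)\,dt$ using $\int \psi_\sigma = \sqrt{2\pi}$, one estimates $\|f_\sigma - f\| \le \frac{1}{\sqrt{2\pi}}\int |\psi_\sigma(t)|\,\|e^{tD}f - f\|\,dt$; splitting the integral at $|t|\le \delta$ and $|t|>\delta$, using strong continuity of the group at $0$ on the first piece and the uniform bound $\|e^{tD}f-f\|\le 2\|f\|$ together with the concentration of the mass of $\psi_\sigma$ near the origin on the second, gives the convergence. (Alternatively one may phrase the whole argument through the modulus of continuity $\Omega_m(f,s)$ introduced just above, which tends to $0$ as $s\to 0$ by strong continuity, and bound $\|f_\sigma - f\|$ by a suitable average of $\Omega_1(f,\cdot)$.) The main obstacle — really the only delicate point — is the verification that $f_\sigma$ genuinely lies in $\mathbf{B}_\sigma(D)$ rather than merely in some $\mathbf{E}_\tau(D)$ with a large constant; this is where one must be careful to invoke Lemma \ref{basic}, whose content is precisely that the best constant in the exponential-type bound is $\|f_\sigma\|$ itself, so that the Bernstein inequality with sharp constant $\sigma^k$ holds. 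Everything else is routine Bochner-integral bookkeeping and an approximate-identity estimate.
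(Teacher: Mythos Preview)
Your proposal is correct and follows essentially the same route as the paper: mollify $f$ by integrating the orbit $e^{tD}f$ against an $L^{1}$ kernel whose Fourier transform is supported in $[-\sigma,\sigma]$ (the paper uses the explicit kernel $h(t)=a(\sin(t/4)/t)^{4}$ and its rescalings), verify the result lies in $\mathbf{B}_{\sigma}(D)$ via Lemma~\ref{basic}, and conclude by an approximate-identity estimate. The only cosmetic differences are that the paper checks membership in $\mathbf{B}_{\sigma}(D)$ by directly extending the Bochner integral to complex $z$ using the analyticity of the kernel (rather than your Fourier-support argument through functionals), and controls $\|f-\mathcal{R}_{h}^{\sigma}(f)\|$ via the modulus of continuity together with the finiteness of $\int h(t)(1+|t|)\,dt$, which is your parenthetical alternative.
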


\begin{proof}

Note that if  $\phi\in L_{1}(\mathbb{R}),\>\>\>\|\phi\|_{1}=1,$ is an entire function of exponential
type $\sigma$ then for any $f\in E$ the vector
$$
g=\int _{-\infty}^{\infty}\phi(t)e^{tD}fdt
$$
belongs to $\mathbf{B}_{\sigma}(D).$ Indeed,  for every real $\tau$ we
have
$$
e^{\tau
D}g=\int_{-\infty}^{\infty}\phi(t)e^{(t+\tau)D}fdt=\int_{-\infty}^{\infty}\phi(
t-\tau)e^{tD}fdt.
$$
 Using this formula we can extend the abstract function $e^{\tau D}g$ to the
complex plane as
$$
e^{zD}g=\int_{-\infty}^{\infty}\phi(t-z)e^{tD}fdt.
$$
 since by assumption  $h$ is an entire function of exponential
type $\sigma$ and $\|\phi\|_{L_{1}(\mathbb{R})}=1$
 we have
$$
\|e^{zD}g\|\leq
\|f\|\int_{-\infty}^{\infty}|\phi(t-z)|dt\leq\|f\|e^{\sigma|z|}.
 $$
 This inequality  implies that $g$ belongs to $\mathbf{B}_{\sigma}(D)$.

Let
 $$
h(t)=a\left(\frac{\sin (t/4)}{t}\right)^{4}
$$
 and
$$
a=\left(\int_{-\infty}^{\infty}\left(\frac{\sin
(t/4)}{t}\right)^{4}dt\right)^{-1}.
$$
Function $h$ will have the
 following properties:

\begin{enumerate}

\item $h$ is an even nonnegative entire function of exponential
type one;

\item $h$ belongs to $L_{1}(\mathbb{R})$ and its
$L_{1}(\mathbb{R})$-norm is $1$;

\item  the integral
\begin{equation}\int_{-\infty}^{\infty}h(t)|t|dt
\end{equation} 
is finite.
\end{enumerate}
Consider
 the following vector 
\begin{equation}
\mathcal{R}_{h}^{\sigma}(f)=
\int_{-\infty}^{\infty}h(t)e^{\frac{t}{\sigma}D
} fdt=\int_{-\infty}^{\infty}h(t\sigma)e^{tD}fdt,\label{id}
 \end{equation}
since the function $h(t)$ has exponential type one the function
$h(t\sigma)$ has the type $\sigma$. It  implies (by the previous) that $\mathcal{R}_{h}^{\sigma}(f)$ belongs to $\mathbf{B}_{\sigma}(D)$.

The modulus of
continuity is defined as in \cite{BB}
$$
\Omega(f,s)=\sup_{|\tau|\leq
s}\left\|\Delta_{\tau}f\right\|,\>\>\>\>
 \Delta_{\tau}f=(I-e^{\tau D})f. \label{dif}
$$
Note, that for every $f\in E$ the modulus $\Omega(f,s)$ goes to zero when $s$ goes to zero.  Below we are using  an easy verifiable inequality
$
\Omega\left(f, as\right)\leq \left(1+a\right)\Omega(f,
s),\>\>\> a\in \mathbb{R}_{+}.
$
We obtain
$$
\|f-\mathcal{R}_{h}^{\sigma}(f)\|\leq
\int_{-\infty}^{\infty}h(t)\left\|\Delta_{t/\sigma}f\right\|dt\leq
\int_{-\infty}^{\infty}h(t)\Omega\left(f, t/\sigma\right)dt\leq 
$$
$$
\Omega\left(f,
\sigma^{-1}\right)\int_{-\infty}^{\infty}h(t)(1+|t|)dt\leq
{C}_{h}\Omega\left(f,
\sigma^{-1}\right),
$$
where the integral
 $$
C_{h}=\int_{-\infty}^{\infty}h(t)(1+|t|)dt
$$
 is finite by the choice of $h$. Theorem  is proved.
\end{proof}

\subsection{Landau-Kolmogorov-Stein-type inequality}

Let us introduce the Favard constants (see \cite{Akh}, Ch. V)
which are defined as
$$
K_{j}=\frac{4}{\pi}\sum_{r=0}^{\infty}\frac{(-1)^{r(j+1)}}{(2r+1)^{j+1}},\>\>\>
j,\>\>r\in \mathbb{N}.
$$
It is known \cite{Akh}, Ch. V, that the sequence of all Favard
constants with even indices is strictly increasing and belongs to
the interval $[1,4/ \pi)$ and the sequence of all Favard constants
with odd indices is strictly decreasing and belongs to the
interval $(\pi/4, \pi/2],$ i.e.,
\begin{equation}
K_{2j}\in [1,4/ \pi), \; K_{2j+1}\in (\pi/4, \pi/2].\label{Fprop}
\end{equation}
We will need the following generalization of the classical
Landau-Kolmogorov-Stein inequality. 
\begin{lemma} 
For every  $f\in \mathcal{D}^{\infty} $ 
the following inequalities  hold true for $0\leq k \leq n$
\begin{equation}
\left\|D^{k}f\right\|^n \leq C_{k,n}\|D^{n}f\|^{k}\|f\|^{n-k},\>\>\>
0\leq k \leq n,\label{KS}
\end{equation}
where $C_{k,n}= (K_{n-k})^n/(K_{n})^{n-k}.$ \label{LKS}
\end{lemma}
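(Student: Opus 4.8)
The plan is to reduce the abstract Landau--Kolmogorov--Stein inequality to the classical scalar one by the standard duality device used already in the proof of Lemma~\ref{basic}. Fix $f\in\mathcal{D}^\infty$ with $D^nf\neq 0$ (the case $D^nf=0$ forcing, via $\mathbf{B}_0(D)$-type reasoning, that $D^kf=0$ for all $k\ge 1$, so the inequality is trivial once $k\ge 1$, and trivial anyway when $k=0$). For an arbitrary functional $h^*\in E^*$ with $\|h^*\|=1$, consider the scalar function $\varphi(t)=\langle e^{tD}f,\,h^*\rangle$. Because $e^{tD}$ is a group of isometries and $f\in\mathcal{D}^\infty$, $\varphi$ is $n$ times continuously differentiable on $\mathbb{R}$ with $\varphi^{(j)}(t)=\langle e^{tD}D^jf,\,h^*\rangle$, hence
$$
\|\varphi\|_{C(\mathbb{R})}\le\|f\|,\qquad \|\varphi^{(n)}\|_{C(\mathbb{R})}\le\|D^nf\|,
$$
and $\varphi^{(k)}(0)=\langle D^kf,\,h^*\rangle$.

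The next step is to invoke the classical Landau--Kolmogorov--Stein inequality on the line in its sharp (Favard-constant) form: for a function $g$ with $g,g^{(n)}$ bounded and uniformly continuous on $\mathbb{R}$,
$$
\|g^{(k)}\|_{C(\mathbb{R})}^{\,n}\le \frac{(K_{n-k})^n}{(K_n)^{n-k}}\,\|g^{(n)}\|_{C(\mathbb{R})}^{\,k}\,\|g\|_{C(\mathbb{R})}^{\,n-k},\qquad 0\le k\le n,
$$
which is precisely the constant $C_{k,n}$ announced. Applying this to $g=\varphi$ and evaluating the left side at $t=0$ gives
$$
\bigl|\langle D^kf,\,h^*\rangle\bigr|^{\,n}\le C_{k,n}\,\|D^nf\|^{\,k}\,\|f\|^{\,n-k}.
$$
Finally, by Hahn--Banach choose $h^*\in E^*$ with $\|h^*\|=1$ and $\langle D^kf,\,h^*\rangle=\|D^kf\|$; substituting yields $\|D^kf\|^{\,n}\le C_{k,n}\|D^nf\|^{\,k}\|f\|^{\,n-k}$, which is \eqref{KS}.

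The main obstacle is not conceptual but one of hypotheses: the classical scalar inequality in the sharp Favard form is usually stated for functions bounded together with the relevant derivative on all of $\mathbb{R}$, so one must make sure $\varphi$ and $\varphi^{(n)}$ are genuinely bounded and (for the sharp constant) uniformly continuous on $\mathbb{R}$ — which follows here because $\varphi^{(j)}(t)=\langle e^{tD}D^jf,h^*\rangle$ and $e^{tD}$ is a uniformly bounded (isometric) $C_0$-group, so each $\varphi^{(j)}$ is bounded by $\|D^jf\|$ and continuous, and boundedness of $\varphi^{(n+1)}$ or an interpolation argument gives the needed uniform continuity of $\varphi$ and $\varphi^{(n)}$. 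Once these regularity points are in place, the scalar theorem applies verbatim and the duality step is routine; the role of the bounds \eqref{Fprop} on the Favard constants is only to keep $C_{k,n}$ finite and under control, and is not needed for the proof of the inequality itself.
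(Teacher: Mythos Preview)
Your proof is correct and follows essentially the same route as the paper's own argument: both apply the classical scalar Landau--Kolmogorov--Stein inequality to the function $t\mapsto\langle e^{tD}f,h^*\rangle$, bound $\|\varphi^{(j)}\|_{C(\mathbb{R})}$ by $\|D^jf\|$ using that $e^{tD}$ is isometric, evaluate at $t=0$, and then choose a norming functional $h^*$ via Hahn--Banach. Your version is slightly more careful about the regularity hypotheses and the degenerate case $D^nf=0$, but the approach is the same.
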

\begin{proof}
Indeed, for any $h^{*}\in E^{*}$ the classical Landau-Kolmogorov-Stein inequality 
applied to the entire function $\left<e^{tD}f,h^{*}\right>$ gives
$$
\left\|\left(\frac{d}{dt}\right)^{k}\left<e^{tD}f,h^{*}\right>\right\|^n_{
C(\mathbb{R}^{1})}\leq
C_{k,n}\left\|\left(\frac{d}{dt}\right)^{n}\left<e^{tD}f,h^{*}\right>\right\|_{
C(\mathbb{R}^{1})}^{k}\times
$$
$$
\left\|\left<e^{tD}f,h^{*}\right>\right\|_{C(\mathbb{R}^{1})}^{n-k},\quad
0<k< n,
$$
or
$$
\left\|\left<e^{tD}D^{k}f,h^{*}\right>\right\|_{ C(\mathbb{R}^{1})}^n\leq
C_{k,n}\left\|\left<e^{tD}D^{n}f,h^{*}\right>\right\|_{
C(\mathbb{R}^{1})}^{k}\left\|\left<e^{tD}f,h^{*}\right>\right\|_{C(\mathbb{R}^{1})}^{n-k}.
$$
Applying the Schwartz inequality to the right-hand side, we obtain
\begin{align*}
\left\|\left<e^{tD}D^{k}f,h^{*}\right>\right\|_{ C(\mathbb{R}^{1})}^n & \leq
C_{k,n}\|h^{*}\|^{k}\|D^{n}f\|^{k}\|h^{*}\|^{n-k}\|f\|^{n-k}\\
&\leq C_{k,n}\|h^{*}\|^n \|D^{n}f\|^{k}\|f\|^{n-k},
\end{align*}
which, when $t=0,$ yields
$$
\left|\left<D^{k}f,h^{*}\right>\right|^n\leq C_{k,n}\|h^{*}\|^n
\|D^{n}f\|^{k}\|f\|^{n-k}.
$$
By choosing $h$ such that $\left|\left<D^{k}f,h^{*}\right>\right|=\|D^{k}f\|$ and
$\|h^{*}\|=1$  we obtain (\ref{KS}).
\end{proof}

\subsection{Another characterization of the abstract Bernstein spaces}

\begin{definition}
For a given $f\in E$ the notation $\sigma_{f}$ will be used for the smallest finite real number (if any) for which 
$$\|D^{k}f\|\leq \sigma_{f}^{k}\|f\|,\>\>\>k\in\mathbb{N}.
$$
 If there is no such finite number we set $\sigma_{f}=\infty$. 
\end{definition}

Now we  are going to prove another characterization of Bernstein spaces. In the case of Hilbert spaces corresponding result was appeard in \cite{Pes08a}, \cite{PZ}.

\begin{theorem}\label{new}
Let $f\in E$ belongs to a space  $\mathbf{B}_{\sigma}(D),$ for some
$0<\sigma<\infty.$ Then the following limit exists
 \begin{equation}
 d_f=\lim_{k\rightarrow \infty} \|D^k
f\|^{1/k} \label{limit}
\end{equation}
and $d_f=\sigma_f.$ 

Conversely, if
$f\in \mathcal{D}^{\infty}$ and $d_f=\lim_{k\rightarrow \infty}
\|D^k f\|^{1/k}$ exists and is finite, then $f\in\mathbf{B}_{d_f}(D)$
and $d_f=\sigma_f .$

\end{theorem}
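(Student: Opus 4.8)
The plan is to derive both halves of the theorem from the Landau--Kolmogorov--Stein inequality (Lemma~\ref{LKS}), the uniform bounds (\ref{Fprop}) on the Favard constants, and Lemma~\ref{basic}, which converts any estimate of the form $\|D^{k}f\|\leq C\tau^{k}$ valid for all $k$ (with $C<\infty$, $\tau>0$) into the sharp one $\|D^{k}f\|\leq\tau^{k}\|f\|$. One may assume throughout that $f\neq 0$, the case $f=0$ being trivial.

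For the first assertion I would first record the two easy inequalities. Since $\mathbf{B}_{\sigma}(D)\subset\mathcal{D}^{\infty}$, the set $\{\tau\geq 0:\|D^{k}f\|\leq\tau^{k}\|f\|\text{ for all }k\}$ is nonempty (it contains $\sigma$) and stable under passing to the infimum of its elements, so $\sigma_{f}$ is finite and the defining inequality is actually attained, $\|D^{k}f\|\leq\sigma_{f}^{k}\|f\|$. Taking $k$-th roots and letting $k\to\infty$ gives $\limsup_{k}\|D^{k}f\|^{1/k}\leq\sigma_{f}$. The substance of the argument is the matching lower bound on the $\liminf$. Here Lemma~\ref{LKS} together with (\ref{Fprop}) --- which yields $C_{k,n}^{1/n}=K_{n-k}(K_{n})^{k/n-1}\leq 2$ for all $0\leq k\leq n$, since $\pi/4<K_{j}\leq\pi/2$ for every $j$ --- produces the key estimate
\[
\|D^{k}f\|\ \leq\ 2\,\|D^{n}f\|^{k/n}\,\|f\|^{1-k/n},\qquad 0\leq k\leq n .
\]
Suppose, for contradiction, that $\ell:=\liminf_{k}\|D^{k}f\|^{1/k}<\sigma_{f}$; choose a subsequence $k_{j}\to\infty$ with $\|D^{k_{j}}f\|^{1/k_{j}}\to\ell$, apply the estimate with $n=k_{j}$ (legitimate once $k_{j}\geq k$), and let $j\to\infty$: since $\|D^{k_{j}}f\|^{k/k_{j}}=(\|D^{k_{j}}f\|^{1/k_{j}})^{k}\to\ell^{k}$ and $\|f\|^{1-k/k_{j}}\to\|f\|$, one obtains $\|D^{k}f\|\leq 2\ell^{k}\|f\|$ for every $k\in\mathbb{N}$. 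If $\ell>0$, Lemma~\ref{basic} applied with this $\ell$ absorbs the constant $2$ and gives $\|D^{k}f\|\leq\ell^{k}\|f\|$, hence $\sigma_{f}\leq\ell$, a contradiction; if $\ell=0$ the same estimate forces $D^{k}f=0$ for all $k\geq 1$, so $\sigma_{f}=0$, again contradicting $\ell<\sigma_{f}$. Therefore $\liminf_{k}\|D^{k}f\|^{1/k}\geq\sigma_{f}$, the limit $d_{f}$ exists, and $d_{f}=\sigma_{f}$.

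For the converse, let $f\in\mathcal{D}^{\infty}$ with $d_{f}=\lim_{k}\|D^{k}f\|^{1/k}$ existing and finite. Fixing $k$ and letting $n\to\infty$ in the same consequence of Lemma~\ref{LKS} and (\ref{Fprop}), and using $\|D^{n}f\|^{k/n}\to d_{f}^{k}$, one gets $\|D^{k}f\|\leq 2\,d_{f}^{k}\|f\|$ for all $k$. If $d_{f}>0$, Lemma~\ref{basic} upgrades this to $\|D^{k}f\|\leq d_{f}^{k}\|f\|$, so $f\in\mathbf{B}_{d_{f}}(D)$; if $d_{f}=0$, the estimate forces $D^{k}f=0$ for $k\geq 1$, so $f\in\mathbf{B}_{0}(D)$ and $\sigma_{f}=0=d_{f}$. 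In the case $d_{f}>0$ the first part now applies to $f\in\mathbf{B}_{d_{f}}(D)$ and shows $\lim_{k}\|D^{k}f\|^{1/k}=\sigma_{f}$; since that limit equals $d_{f}$ by hypothesis, $d_{f}=\sigma_{f}$.

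The step I expect to be the main obstacle is the lower bound on the $\liminf$. The conceptual difficulty is that $\|D^{k}f\|^{1/k}$ need not be monotone and $\log\|D^{k}f\|$ need not be subadditive, so neither Fekete's lemma nor a direct submultiplicativity estimate is available. The Landau--Kolmogorov--Stein inequality is exactly the tool that bridges small $k$ and large $k$, and what makes it usable is that the Favard constants are bounded away from $0$ and $\infty$, keeping $C_{k,n}^{1/n}$ bounded uniformly; the leftover multiplicative constant is then removed by Lemma~\ref{basic}. The only additional care needed is the routine separate treatment of $f=0$ and of the degenerate values $d_{f}=0$ (respectively $\ell=0$), since Lemma~\ref{basic} requires $\sigma>0$.
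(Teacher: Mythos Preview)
Your proof is correct and follows essentially the same route as the paper: both rely on the Landau--Kolmogorov--Stein inequality (Lemma~\ref{LKS}) combined with the uniform bounds (\ref{Fprop}) on the Favard constants to compare $\|D^{k}f\|^{1/k}$ with $\|D^{n}f\|^{1/n}$, and on Lemma~\ref{basic} to eliminate the residual multiplicative constant. The only differences are organizational --- you merge the existence of the limit and the identification $d_{f}=\sigma_{f}$ into a single $\liminf$/$\limsup$ argument, whereas the paper separates them --- and you are more careful about the degenerate cases $f=0$ and $\ell=0$ (respectively $d_{f}=0$), which the paper's argument glosses over.
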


\begin{proof}

From Lemma \ref{LKS} we have $$ \left\|D^{k}f\right\|^n \leq
C_{k,n}\|D^{n}f\|^{k}\|f\|^{n-k}, \quad 0\leq k \leq n.
$$
Without loss of generality, let us assume that $\|f\|=1.$ Thus,
$$ \left\|D^{k}f\right\|^{1/k} \leq
(\pi/2)^{1/kn}\|D^{n}f\|^{1/n}, \quad 0\leq k \leq n.
$$
Let $k$ be arbitrary but fixed. It follows that
$$\left\|D^{k}f\right\|^{1/k} \leq
(\pi/2)^{1/kn}\|D^{n}f\|^{1/n}, \mbox{ for all } n\geq k,$$ which
implies that
$$\left\|D^{k}f\right\|^{1/k}\leq \underline{\lim}_{n\rightarrow
\infty}\|D^{n}f\|^{1/n}.$$ But \rm since this inequality is true for
all $k>0,$ we obtain that
$$\overline{\lim}_{k\rightarrow\infty}\|D^{k}f\|^{1/k}\leq
\underline{\lim}_{n\rightarrow \infty}\|D^{n}f\|^{1/n},$$ which
proves that $d_f=\lim_{k\rightarrow}\|D^{k}f\|^{1/k}$ exists.
Because  $f\in \mathbf{B}_{\sigma}(D) $ the constant $\sigma_{f}$ is finite and we have 
$$
\|D^{k}f\|^{1/k}\leq \sigma_f  \|f\|^{1/k},
$$
 and by taking the
limit as $k\rightarrow\infty$ we obtain $d_f\leq \sigma_f.$ To show
that $d_f= \sigma_f,$ let us assume that $d_f< \sigma_f.$
There exist $M>0$ and $\sigma $ such that $0<d_f<\sigma
< \sigma_f$ and $$\|D^k f\|\leq M \sigma^k, \quad \mbox{for all }
k>0 .$$ Thus, by Lemma  \ref{basic} we have $f\in \mathbf{B}_\sigma(D) ,$
which is a contradiction to the definition of $\sigma_f.$

Conversely,  suppose that $d_f=\lim_{k\rightarrow \infty} \|D^k
f\|^{1/k}$ exists and is finite. Therefore, there exist $M>0$ and
$\sigma
>0$ such that $d_f<\sigma $ and
$$\|D^k f\|\leq M \sigma^k, \quad \mbox{for all } k>0 ,$$
which, in view of Lemma \ref{basic}, implies that $f\in
\mathbf{B}_{\sigma}(D) .$ Now by repeating the argument in the first part of
the proof we obtain $d_f=\sigma_f ,$ where $\sigma_f=\inf\left\{
\sigma : f\in \mathbf{B}_{\sigma}(D)\right\}.$

The proof is complete. 

\end{proof}

\subsection{Bernstein vectors in Hilbert spaces}

Let $D$ be a self-adjoint operator in a Hilbert space $H$. 
 According to the spectral theory
\cite{BS}, there exist a direct integral of Hilbert spaces $A=\int
A(\lambda )dm (\lambda )$ and a unitary operator $\mathcal{F}_{D}$
from $H$ onto $A$, which transforms the domain $\mathcal{D}_{k}$
of the operator $ D^{k}$ onto $A_{k}=\{a\in A|\lambda^{k}a\in A
\}$ with norm

$$\|a(\lambda )\|_{A_{k}}= \left (\int^{\infty}_{-\infty} \lambda ^{2k}
\|a(\lambda )\|^{2}_{A(\lambda )} dm(\lambda ) \right )^{1/2} $$
and  $\mathcal{F}_{D}(Df)=\lambda (\mathcal{F}_{D}f), f\in
\mathcal{D}_{1}. $
\begin{definition}
The unitary operator $\mathcal{F}_{D}$ will be called the Spectral
Fourier transform and $a=\mathcal{F}_{D}f $ will be called the
Spectral Fourier transform of $f\in H$. \label{Def2}
\end{definition}
\begin{definition}
We will say that a vector  $f$ in  $H$  belongs to the space
$PW_{\sigma}(D),\>\sigma>0,$  if its Spectral Fourier transform
$\mathcal{F}_{D}f=a$ has support in $[-\sigma , \sigma ] $.
\label{DefSFT}
\end{definition}
The operator $iD$ generates in $H$ a strongly continuous group of unitary operators $e^{itD},\>\>t\in \mathbb{R}$.
The
next theorem, whose proof can be found in \cite{Pes88},
\cite{Pes00}, shows that the space $PW_\sigma (D)$ coincides with the space ${\bf B}_{\sigma}(D)$.

\begin{theorem}
The following conditions are equivalent:

1)$f\in PW_{\sigma}(D)$;

2) $f$ belongs to the set
$$
\mathcal{D}_{\infty}=\bigcap_{k=1}^{\infty}\mathcal{D}_{k},
$$
and for all $k\in \mathbb{N},$ the following Bernstein inequality
holds true
\begin{equation}
\|D^{k}f\|\leq \sigma^{k}\|f\|\label{BI}.
\end{equation}

\end{theorem}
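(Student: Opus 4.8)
The plan is to establish the equivalence of the two conditions by passing through the spectral representation of $D$ and relating it to the scalar Paley--Wiener theory already recorded in this paper. The implication $1)\Rightarrow 2)$ is the easy direction: if $f\in PW_\sigma(D)$ then $a=\mathcal{F}_Df$ is supported in $[-\sigma,\sigma]$, so $\lambda^k a$ is supported there as well and $\|D^kf\|^2=\int_{-\sigma}^{\sigma}\lambda^{2k}\|a(\lambda)\|^2_{A(\lambda)}dm(\lambda)\leq \sigma^{2k}\int\|a(\lambda)\|^2_{A(\lambda)}dm(\lambda)=\sigma^{2k}\|f\|^2$, which is exactly the Bernstein inequality \eqref{BI}. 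In particular $f\in\mathcal{D}_\infty$ automatically, since all the integrals are finite.

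For the converse $2)\Rightarrow 1)$, I would argue by contradiction (or contrapositive): suppose the Bernstein inequalities hold for all $k$ but $a=\mathcal{F}_Df$ is \emph{not} supported in $[-\sigma,\sigma]$. Then there is an $\varepsilon>0$ such that the set $S=\{\lambda:|\lambda|\geq\sigma+\varepsilon\}$ has positive "mass" in the sense that $\int_S\|a(\lambda)\|^2_{A(\lambda)}dm(\lambda)=c>0$. On this set $\lambda^{2k}\geq(\sigma+\varepsilon)^{2k}$, so $\|D^kf\|^2\geq\int_S\lambda^{2k}\|a(\lambda)\|^2_{A(\lambda)}dm(\lambda)\geq (\sigma+\varepsilon)^{2k}c$. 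Taking $2k$-th roots and letting $k\to\infty$ gives $\liminf_k\|D^kf\|^{1/k}\geq\sigma+\varepsilon$, while the Bernstein inequality forces $\|D^kf\|^{1/k}\leq\sigma\|f\|^{1/k}\to\sigma$. This contradiction shows $a$ must be supported in $[-\sigma,\sigma]$, i.e. $f\in PW_\sigma(D)$. One should double-check that $f\in\mathcal{D}_\infty$ is precisely what guarantees all these spectral integrals are finite and that taking $k$-th roots of the norm inequality is legitimate; this is where the hypothesis in 2) that $f\in\bigcap_k\mathcal{D}_k$ is used essentially.

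An alternative route, which is closer in spirit to the rest of the paper, is to use Corollary \ref{key-col}: if the Bernstein inequalities hold then for every $g\in H$ the scalar function $t\mapsto\langle e^{itD}f,g\rangle$ lies in $B_\sigma^\infty(\mathbb{R})$, hence by the classical Paley--Wiener theorem its Fourier transform is supported in $[-\sigma,\sigma]$; but this Fourier transform is, up to normalization, $\langle a(\cdot),\widehat{g}(\cdot)\rangle$ in the direct-integral picture, and letting $g$ run over a total set forces $a$ to vanish a.e. outside $[-\sigma,\sigma]$. I would likely present the direct spectral argument of the previous paragraph as the main proof since it is self-contained given the spectral theorem, and only remark on the group-theoretic alternative.

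The main obstacle is not any single hard estimate but rather the bookkeeping with the direct integral decomposition: one must be careful that "support of $a$ in $[-\sigma,\sigma]$" is the correct measure-theoretic statement, that $\|D^kf\|^2=\int\lambda^{2k}\|a(\lambda)\|^2\,dm(\lambda)$ holds for all $k$ under the hypothesis $f\in\mathcal{D}_\infty$, and that the limit/liminf manipulation with $\|D^kf\|^{1/k}$ is valid. Since the paper cites \cite{Pes88},\cite{Pes00} for this theorem, I expect the write-up to be short, essentially the two displays above, with the delicate points handled by invoking the spectral theorem of \cite{BS} and the elementary fact that $c^{1/k}\to 1$ for any fixed $c>0$.
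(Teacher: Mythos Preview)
Your argument is correct in both directions. Note, however, that the paper does not actually supply its own proof of this theorem: the sentence immediately preceding the statement reads ``whose proof can be found in \cite{Pes88}, \cite{Pes00},'' and no argument is given in the text. So there is nothing in this paper to compare your write-up against beyond that citation.

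For what it is worth, the direct spectral-integral argument you give (the two inequalities $\|D^kf\|^2=\int\lambda^{2k}\|a(\lambda)\|^2\,dm(\lambda)\le\sigma^{2k}\|f\|^2$ for $1)\Rightarrow 2)$, and the lower bound $(\sigma+\varepsilon)^{2k}c$ on a set of positive mass for the contrapositive of $2)\Rightarrow 1)$) is exactly the standard proof and is the one in the cited references. Your alternative route via Corollary~\ref{key-col} and the scalar Paley--Wiener theorem would also work but is less direct here, since once the spectral decomposition is in hand the support statement is a one-line measure-theoretic fact; the group-theoretic detour is more natural in the general Banach-space setting of Sections~\ref{Bernstein-vectors}--\ref{RB in Banach}, where no spectral measure is available.
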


\section{Sampling formulas for orbits  of Bernstein vectors}\label
{sampling in Banach spaces}

We assume that $D$ generates one-parameter strongly continuous  group of isometries  $e^{tD}, \>\>t\in \mathbb{R},$ in a Banach space $E$. In this section we prove explicit formulas for a  trajectory $e^{tD}f$ with $f\in \mathbf{B}_{\sigma}(D)$ in terms of a countable number of equally spaced samples.

By using Theorem \ref{smaller sampling} and Corollary \ref{key-col} we obtain 
our first sampling theorem. 
\begin{theorem}\label{FST}If $f\in {\bf B}_{\sigma}(D)$ then for every $g^{*}\in E^{*}\>$, every $0<\gamma< 1,$ and every $z\in \mathbb{C}$ one has
\begin{enumerate}
\item The real variable function $\langle e^{tD}f,\>g^{*}\rangle$ belongs to the regular space $B_{\sigma}^{\infty}(\mathbb{R})$, which means it has extension $\langle e^{tD}f,\>g^{*}\rangle$ to the complex plane $\mathbb{C}$ as an entire function of exponential type $\sigma$.

\item The following sampling formula holds
$$
\langle e^{zD}f,\>g^{*}\rangle=\sum_{k\in \mathbb{Z}}\left\langle e^{(\gamma k)D}f,\>g^{*}\right\rangle sinc\left(\gamma^{-1}z -k\right),\>\>z\in \mathbb{C},
$$
where the series converges uniformly on compact subsets of $\mathbb{C}$.
\end{enumerate}
\end{theorem}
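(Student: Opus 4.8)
The plan is to reduce the abstract statement to the scalar sampling theorem (Theorem~\ref{smaller sampling}) applied to the auxiliary function $\varphi(t) = \langle e^{tD}f,\,g^{*}\rangle$, and then promote the convergence from real arguments to complex arguments by a normal-families argument. First I would invoke Corollary~\ref{key-col}: since $f\in {\bf B}_{\sigma}(D)$ and $e^{tD}$ is a group of isometries, for every $g^{*}\in E^{*}$ the function $\varphi(t)=\langle e^{tD}f,\,g^{*}\rangle$ lies in $B_{\sigma}^{\infty}(\mathbb{R})$ and extends to an entire function of exponential type $\sigma$, bounded on $\mathbb{R}$ by $\|g^{*}\|\,\|f\|$; that settles item (1). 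For item (2), I would apply Theorem~\ref{smaller sampling} to $\varphi$: note $\varphi\in B_{\sigma}^{\infty} = B_{\tau}^{\infty}$ with $\tau=\sigma$, so rather than directly I would instead rescale. Set $g(t)=\varphi(\gamma t)=\langle e^{(\gamma t)D}f,\,g^{*}\rangle$; this is entire of exponential type $\gamma\sigma < \sigma$, hence $g\in B_{\tau}^{\infty}$ for $\tau=\gamma\sigma<\pi\cdot(1/\gamma)\cdot\gamma = \pi$ — more precisely $g$ has type $\gamma\sigma$ and we compare against the sampling rate corresponding to period $\pi/\sigma'$ with $\sigma'=\pi$; the point is that $g$ has type strictly less than the critical value, so the hypothesis "$f\in B_{\tau}^{\infty}$ for some $0\le\tau<\sigma$" of Theorem~\ref{smaller sampling} is met with the roles played by $g$, $\sigma'=\pi$. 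Applying that theorem to $g$ at the nodes $k\pi/\pi = k$ gives, for all $w\in\mathbb{C}$,
\[
g(w) = \sum_{k\in\mathbb{Z}} g(k)\,\mathrm{sinc}(w-k),
\]
with uniform convergence on compacta. Substituting back $w=z/\gamma$ and $g(k)=\langle e^{(\gamma k)D}f,\,g^{*}\rangle$ yields exactly
\[
\langle e^{zD}f,\,g^{*}\rangle = \sum_{k\in\mathbb{Z}}\langle e^{(\gamma k)D}f,\,g^{*}\rangle\,\mathrm{sinc}(\gamma^{-1}z-k),
\]
with uniform convergence on compact subsets of $\mathbb{C}$.

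The main obstacle is making sure Theorem~\ref{smaller sampling} genuinely applies: it requires the sampled function to sit in $B_{\tau}^{\infty}$ with $\tau$ \emph{strictly} below the sampling frequency, which is why the parameter $\gamma\in(0,1)$ (oversampling) is introduced — it buys the strict inequality $\gamma\sigma<\sigma$ after rescaling, so the degenerate case exhibited in the Remark following Theorem~\ref{sampling1} (where $\sin x$ is sampled at the Nyquist rate and the series collapses to zero) is avoided. I would be careful to track the rescaling bookkeeping: if $g(t)=\varphi(\gamma t)$ has exponential type $\gamma\sigma$, then applying the sampling theorem at rate one (period $\pi/\pi$, i.e. integer nodes) is legitimate precisely when $\gamma\sigma<\pi$; since $\sigma$ is fixed and $\gamma$ ranges in $(0,1)$ this is not automatic, so the cleaner route is to apply Theorem~\ref{smaller sampling} to $\varphi$ itself with sampling period $\pi\gamma/\sigma$ (nodes $t_k=\gamma k$), which is valid because $\varphi$ has type $\sigma$ and the sampling frequency is $\sigma/\gamma>\sigma$; this is exactly the "$f\in B_{\tau}^{\infty}$, $\tau<\sigma'$" hypothesis with $\sigma'=\sigma/\gamma$ and $\tau=\sigma$. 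Either normalization works; I would present the second.

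Finally, I would remark that no appeal to the vector-valued structure beyond Corollary~\ref{key-col} is needed: all convergence statements are scalar, obtained by testing against an arbitrary $g^{*}\in E^{*}$, and the uniformity on compacta is inherited directly from Theorem~\ref{smaller sampling}. Thus the proof is essentially a two-line reduction plus the rescaling; the content is entirely in Corollary~\ref{key-col}, which already did the work of showing the orbit trajectory is a bona fide Bernstein function of exponential type $\sigma$.
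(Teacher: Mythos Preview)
Your proposal is correct and follows exactly the route the paper indicates: the paper's entire proof is the single sentence ``By using Theorem~\ref{smaller sampling} and Corollary~\ref{key-col} we obtain our first sampling theorem,'' and you have simply spelled out that reduction, correctly observing that Corollary~\ref{key-col} places $\varphi(t)=\langle e^{tD}f,g^{*}\rangle$ in $B_{\sigma}^{\infty}$ and that the oversampling parameter $\gamma<1$ supplies the strict inequality $\tau<\sigma'$ needed in Theorem~\ref{smaller sampling}. Your care about the rescaling bookkeeping (and the observation that the condition ought really to read $\gamma\sigma<\pi$, equivalently $\gamma<\pi/\sigma$, for the nodes $\gamma k$ to match the stated formula for arbitrary $\sigma$) is well taken and more explicit than what the paper provides.
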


\begin{theorem}\label{8.1}
If $f\in \mathbf{B}_{\sigma}(D)$ then the following sampling formulas hold for $t\in \mathbb{R}$
\begin{equation}\label{s1}
e^{tD}f=f+tDf \rm {sinc}\left(\frac{\sigma t}{\pi}\right)+t\sum_{k\neq 0}\frac{e^{\frac{k\pi}{\sigma}D}f-f}{\frac{k\pi}{\sigma}} \rm sinc\left(\frac{\sigma t}{\pi}-k\right),
\end{equation}
 
\begin{equation}\label{l0}
f=e^{tD}f-t \left(e^{t D}Df\right)\rm sinc\left(\frac{\sigma t}{\pi}\right)-t\sum_{k\neq 0}\frac{e^{\left(   \frac{k\pi}{\sigma}+t  \right)D}f-e^{tD}f}{\frac{k\pi}{\sigma}}    \rm sinc\left(\frac{\sigma t}{\pi}+k\right),
\end{equation}
where both series converge in the norm of $E$ uniformly with respect to $t\in \mathbb{R}$.
\end{theorem}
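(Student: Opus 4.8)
The plan is to reduce the abstract statement to the classical Valiron–Tschakalov formula (Theorem \ref{classicalVT}) tested against arbitrary functionals, and then remove the functional by a duality (Hahn–Banach) argument, exactly as in the proof of Lemma \ref{basic}. First I would fix $f\in\mathbf{B}_\sigma(D)$ and an arbitrary $g^*\in E^*$, and set $F(z)=\langle e^{zD}f,\,g^*\rangle$. By Corollary \ref{key-col}, $F$ extends to an entire function of exponential type $\sigma$ that is bounded on $\mathbb{R}$, so $F\in B_\sigma^\infty(\mathbb{R})$; moreover $F'(t)=\langle e^{tD}Df,\,g^*\rangle$ and in particular $F'(0)=\langle Df,\,g^*\rangle$, while $F(k\pi/\sigma)=\langle e^{(k\pi/\sigma)D}f,\,g^*\rangle$. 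Applying Theorem \ref{classicalVT} to $F$ at the point $z=t\in\mathbb{R}$ gives
\begin{equation*}
\langle e^{tD}f,\,g^*\rangle
= t\langle Df,\,g^*\rangle\,\mathrm{sinc}\!\left(\frac{\sigma t}{\pi}\right)
+\langle f,\,g^*\rangle\,\mathrm{sinc}\!\left(\frac{\sigma t}{\pi}\right)
+\sum_{k\neq 0}\langle e^{(k\pi/\sigma)D}f,\,g^*\rangle\,\frac{\sigma t}{k\pi}\,\mathrm{sinc}\!\left(\frac{\sigma t}{\pi}-k\right).
\end{equation*}
Rewriting $\frac{\sigma t}{k\pi}\,\mathrm{sinc}(\frac{\sigma t}{\pi}-k)$ and grouping the $\langle f,g^*\rangle$ term with the sum as in the statement, this is precisely $\langle e^{tD}f - (\text{RHS of }(\ref{s1})),\,g^*\rangle=0$ once we know the series converges in $E$; since $g^*$ is arbitrary, (\ref{s1}) follows by Hahn–Banach. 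Formula (\ref{l0}) is then obtained from (\ref{s1}) by applying it to the vector $e^{tD}f\in\mathbf{B}_\sigma(D)$ and relabeling, or equivalently by the same functional argument with the shifted entire function $\tau\mapsto F(\tau+t)$; here one uses that $e^{tD}$ maps $\mathbf{B}_\sigma(D)$ into itself (a consequence of the last corollary of section \ref{Bernstein-vectors}, since $\mathbf{B}_\sigma(D)$ is closed and $D$-invariant and commutes with $e^{tD}$).

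The main obstacle is upgrading the scalar convergence (which only comes for free from Theorem \ref{classicalVT}) to norm convergence in $E$, uniformly in $t\in\mathbb{R}$. To handle this I would work directly with the partial sums $S_N(t)=\sum_{0<|k|\le N}\big(e^{(k\pi/\sigma)D}f-f\big)\frac{\sigma}{k\pi}\,\mathrm{sinc}(\frac{\sigma t}{\pi}-k)$ and estimate $\|S_N(t)-S_M(t)\|$ in $E$. The vectors $a_k:=e^{(k\pi/\sigma)D}f-f$ satisfy $\|a_k\|\le 2\|f\|$ uniformly, but that alone is not summable against $\frac1k\,\mathrm{sinc}(\frac{\sigma t}{\pi}-k)$; the gain must come from the fact that $e^{(k\pi/\sigma)D}f-f = \int_0^{k\pi/\sigma} e^{sD}Df\,ds$, so that $\frac{\sigma}{k\pi}a_k$ is an average of the bounded family $\{e^{sD}Df\}$ with $\|\frac{\sigma}{k\pi}a_k\|\le \|Df\|\le\sigma\|f\|$. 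Then $\big\|\frac{\sigma}{k\pi}a_k\big\|$ is bounded and one needs the tail $\sum_{|k|>N}\frac{1}{|\frac{\sigma t}{\pi}-k|}$-type sums to be controlled; this is exactly the situation handled in the scalar Boas/Valiron–Tschakalov convergence proofs via Abel summation together with the boundedness (uniform in $t$) of the conjugate-Dirichlet-type partial sums $\sum_{|k|\le N}\frac{\mathrm{sinc}(\frac{\sigma t}{\pi}-k)}{k}$. Since $E$ is a Banach space, the same Abel-summation / Cauchy-criterion argument applies verbatim to the $E$-valued partial sums once the scalar kernel bounds are in place, giving a Cauchy sequence in $E$ uniformly in $t$, hence an $E$-limit; testing against $g^*$ identifies that limit with the right-hand side, completing the proof. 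The analogous estimate for (\ref{l0}) is identical after translation. I expect the bookkeeping of the $\mathrm{sinc}$-kernel Abel sums (uniformly in the real parameter $t$) to be the only genuinely technical point; everything else is the functional-reduction template already used for Lemma \ref{basic} and Corollary \ref{key-col}.
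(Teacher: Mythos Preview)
Your overall strategy---test against functionals $g^*\in E^*$, invoke a classical sampling result for the scalar function $F(t)=\langle e^{tD}f,g^*\rangle\in B_\sigma^\infty$, then remove the functional by Hahn--Banach---is exactly the paper's template. The difference is in which classical result is invoked and how convergence is handled. The paper does not apply Valiron--Tschakalov to $F$; instead it applies the ordinary $B_\sigma^2$ sampling theorem (Theorem~\ref{sampling1}) to the auxiliary function $F_1(t)=(F(t)-F(0))/t\in B_\sigma^2$, which is precisely the trick underlying the proof of Theorem~\ref{classicalVT} itself. This produces the series directly in the form $\sum_k \frac{e^{(k\pi/\sigma)D}f-f}{k\pi/\sigma}\,\mathrm{sinc}(\sigma t/\pi-k)$, with the differences already in place and no regrouping required. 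Because $\|e^{(k\pi/\sigma)D}f-f\|\le 2\|f\|$, the $k$-th term has $E$-norm at most $\frac{2\sigma\|f\|}{\pi|k|}\,|\mathrm{sinc}(\sigma t/\pi-k)|$, and the $O(|x|^{-1})$ decay of $\mathrm{sinc}$ then gives absolute convergence in $E$ for each fixed $t$ by a straightforward $O(k^{-2})$ bound---no Abel summation or conjugate-Dirichlet estimates are needed. Your detour through Theorem~\ref{classicalVT} is valid but costs you the extra scalar identity $\mathrm{sinc}(\sigma t/\pi)+\sum_{k\ne 0}\frac{\sigma t}{k\pi}\,\mathrm{sinc}(\sigma t/\pi-k)=1$ to pass from the VT form to~(\ref{s1}); the paper in fact proves the abstract VT formula separately (Theorem~\ref{groupVT}) rather than deriving~(\ref{s1}) from it. Your derivation of~(\ref{l0}) by applying~(\ref{s1}) to $e^{tD}f$ and relabeling matches the paper exactly.
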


\begin{remark}

It is worth to note that if $\>\>\>t\neq  0,\>\>$ then right-hand side of (\ref{l0}) does not contain vector $f$ and we obtain a "linear combination"  of $f$ in terms of vectors  $e^{tD}Df$ and $e^{\left(   \frac{k\pi}{\sigma}+t  \right)D}f,\>\>\>k\in \mathbb{Z}$. 
Also, since integers are zeros of $ \rm sinc \>\>(t)$ both formulas are trivial for $\>\>t=\frac{\pi}{\sigma}m, \>\>\>m\in \mathbb{Z}$ (interpolation).
\end{remark}
       \begin{proof}
       
         If $f\in \mathbf{B}_{\sigma}(D)$ then for any $g^{*}\in E^{*}$ the function $F(t)=\left<e^{tD}f,\>g^{*}\right>$ belongs to $B_{\sigma}^{\infty}(\mathbb{R})$.

We consider $F_{1}\in B_{\sigma}^{2}( \mathbb{R}),$ which is defined as follows.
If $t\neq 0$ then 
\begin{equation}\label{F}
F_{1}(t)=\frac{F(t)-F(0)}{t}=\left<\frac{e^{tD}f-f}{t},\>g^{*}\right>,
\end{equation}
and if $t=0$ then
$
F_{1}(t)=\frac{d}{dt}F(t)|_{t=0}=\left<Df,\>g^{*}\right>.
$
We have
$$
F_{1}(t)=\sum_{k}F_{1}\left(\frac{k\pi}{\sigma}\right)\> \rm sinc\left(\frac{\sigma t}{\pi}-k\right),
$$
which means that   for any $g^{*}\in E^{*}$ 
$$
\left<    \frac{e^{tD}f-f}{t},\>g^{*}    \right>=\sum_{k}\left<\frac{e^{\frac{k\pi}{\sigma}D}f-f}{\frac{k\pi}{\sigma}},\>g^{*}\right>\> \rm sinc\left(\frac{\sigma t}{\pi}-k\right).
$$
The formula (\ref{sinc-1}) shows that $| sinc^{(m)}(x)|$ is of order $O(x^{-1})$ for every $m\in \mathbb{N}$ and this fact implies convergence in $E$ of the series 
$$
\sum_{k}\frac{e^{\frac{k\pi}{\sigma}D}f-f}{\frac{k\pi}{\sigma}} \rm sinc\left(\frac{\sigma t}{\pi}-k\right).
$$
It leads to the following equality for any $g^{*}\in E^{*}$ 
$$
\left<    \frac{e^{tD}f-f}{t},\>g^{*}    \right>=\left<\sum_{k}\frac{e^{\frac{k\pi}{\sigma}D}f-f}{\frac{k\pi}{\sigma}} \rm sinc\left(\frac{\sigma t}{\pi}-k\right),\>g^{*}\right>,\>\>\>t\neq 0,
$$
and if $t= 0$ it gives the identity
$
\left<Df,\>g^{*}\right>=\left<Df,\>g^{*}\right>\sum_{k} \rm sinc \>k.
$
Thus one obtains
\begin{equation}
   \frac{e^{tD}f-f}{t}=\sum_{k}\frac{e^{\frac{k\pi}{\sigma}D}f-f}{\frac{k\pi}{\sigma}} \rm sinc\left(\frac{\sigma t}{\pi}-k\right),\>\>\>t\neq 0,
\end{equation}
or for every $t\in \mathbb{R}$ 
$$
e^{tD}f=f+tDf \rm{sinc}\left(\frac{\sigma t}{\pi}\right)+t\sum_{k\neq 0}\frac{e^{\frac{k\pi}{\sigma}D}f-f}{\frac{k\pi}{\sigma}} \rm sinc\left(\frac{\sigma t}{\pi}-k\right).
$$

We proved the formula (\ref{s1}). 
If in (\ref{s1}) we replace $f$ by $\left(e^{\tau D}f\right)$ for a $\tau\in \mathbb{R}$ we will have

\begin{equation}\label{s100}
e^{tD}\left(e^{\tau D}f\right)=
$$
$$
\left(e^{\tau D}f\right)+t\sum_{k\neq 0}\frac{e^{\frac{k\pi}{\sigma}D}\left(e^{\tau D}f\right)-\left(e^{\tau D}f\right)}{\frac{k\pi}{\sigma}} \rm sinc\left(\frac{\sigma t}{\pi}-k\right)+tD\left(e^{\tau D}f\right)\rm sinc\left(\frac{\sigma t}{\pi}\right).
\end{equation}
For $t=-\tau $ we obtain the next formula which holds for any $\tau\in \mathbb{R},\>\>f\in \mathbf{B}_{\sigma}(D),$ 
\begin{equation}\label{l1}
f=e^{\tau D}f-\tau \sum_{k\neq 0}\frac{e^{\left(   \frac{k\pi}{\sigma}+\tau  \right)D}f-e^{\tau D}f}{\frac{k\pi}{\sigma}}    \rm sinc\left(\frac{\sigma \tau}{\pi}+k\right)-\tau D\left(e^{\tau D}f\right)\rm sinc\left(\frac{\sigma \tau}{\pi}\right),
\end{equation}
which is the formula (\ref{l0}). 
This completes the proof of the theorem.
\end{proof}

The next theorem is a generalization of the Valiron-Tschakaloff
sampling/interpolation Theorem \ref{classicalVT}.

\begin{theorem}\label{groupVT}
For $f\in \mathbf{B}_{\sigma}(D),\>\>\>\sigma>0,$ we have for all $z \in \mathbb{C}$
\begin{equation}\label{VT}
e^{zD}f=z \>\rm sinc\left(\frac{\sigma  z}{\pi}\right)Df+\rm sinc\left(\frac{\sigma  z}{\pi}\right)f+\sum_{k\neq 0}\frac{\sigma z}{k\pi}\rm sinc\left(\frac{\sigma  z}{\pi}-k\right)e^{\frac{k\pi}{\sigma}D}f,
\end{equation}
where the series converges in the norm of $E$.
\end{theorem}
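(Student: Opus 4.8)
The plan is to mimic the proof of the classical Valiron--Tschakaloff formula (Theorem \ref{classicalVT}) at the level of the scalar functions $F(t)=\langle e^{tD}f,g^*\rangle$ and then remove the functionals, exactly as was done in the proof of Theorem \ref{8.1}. First I would fix $g^*\in E^*$ and set $F(t)=\langle e^{tD}f,g^*\rangle$. By Corollary \ref{key-col}, $F\in B_\sigma^\infty(\mathbb{R})$ and extends to an entire function of exponential type $\sigma$ on $\mathbb{C}$. Applying Theorem \ref{classicalVT} to $F$ gives, for all $z\in\mathbb{C}$,
$$
F(z)=zF'(0)\,\mathrm{sinc}\!\left(\tfrac{\sigma z}{\pi}\right)+F(0)\,\mathrm{sinc}\!\left(\tfrac{\sigma z}{\pi}\right)+\sum_{k\neq 0}F\!\left(\tfrac{k\pi}{\sigma}\right)\frac{\sigma z}{k\pi}\,\mathrm{sinc}\!\left(\tfrac{\sigma z}{\pi}-k\right),
$$
with absolute and uniform convergence on compacts. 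Now $F(0)=\langle f,g^*\rangle$, $F'(0)=\langle Df,g^*\rangle$, and $F(k\pi/\sigma)=\langle e^{(k\pi/\sigma)D}f,g^*\rangle$, so the right-hand side is precisely the pairing of $g^*$ against the candidate vector on the right of (\ref{VT}), term by term.

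The substantive point is to promote this scalar identity to a vector identity in $E$, i.e.\ to show the vector series $\sum_{k\neq 0}\frac{\sigma z}{k\pi}\,\mathrm{sinc}(\frac{\sigma z}{\pi}-k)\,e^{(k\pi/\sigma)D}f$ converges in the norm of $E$. The coefficients here are $O(k^{-1})\cdot O(k^{-1})=O(k^{-2})$ for fixed $z$: indeed $\frac{\sigma z}{k\pi}=O(k^{-1})$, and $|\mathrm{sinc}(\frac{\sigma z}{\pi}-k)|=O(k^{-1})$ by the explicit form of $\mathrm{sinc}$ (cf.\ (\ref{sinc-1})), uniformly for $z$ in a compact set. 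Since $e^{tD}$ is a group of isometries, $\|e^{(k\pi/\sigma)D}f\|=\|f\|$, so the series is dominated by a convergent numerical series $\sum_k O(k^{-2})\|f\|$ and hence converges absolutely in $E$; call its sum $g(z)$, and add the two $\mathrm{sinc}$ terms to form the full vector $G(z)$ on the right of (\ref{VT}). For every $g^*\in E^*$, continuity of $g^*$ lets us interchange it with the convergent series, giving $\langle G(z),g^*\rangle=F(z)=\langle e^{zD}f,g^*\rangle$. Since $E^*$ separates points of $E$, we conclude $e^{zD}f=G(z)$, which is (\ref{VT}).

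The main obstacle is the uniformity/convergence bookkeeping for the vector series: one must check that the decay $|\mathrm{sinc}(\frac{\sigma z}{\pi}-k)|=O(|k|^{-1})$ holds with a constant locally uniform in $z$, so that the interchange of $g^*$ with the sum is legitimate and, if one wants it, convergence is uniform on compact subsets of $\mathbb{C}$. This is routine given the closed form of $\mathrm{sinc}$ but should be stated carefully. A secondary point worth a line is the justification that $F$ is genuinely the restriction of an entire function of type $\sigma$ and that $F'(0)=\langle Df,g^*\rangle$; both follow from Corollary \ref{key-col} together with the identity $\frac{d}{dt}\langle e^{tD}f,g^*\rangle|_{t=0}=\langle Df,g^*\rangle$, valid since $f\in\mathcal D^\infty$. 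No new estimates beyond these are needed; everything else is a transcription of Theorem \ref{classicalVT} through the pairing $\langle\,\cdot\,,g^*\rangle$, in the same spirit as the proofs of Theorems \ref{FST} and \ref{8.1}.
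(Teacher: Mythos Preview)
Your proof is correct and follows essentially the same approach as the paper's own proof: apply the classical Valiron--Tschakaloff formula (Theorem \ref{classicalVT}) to the scalar function $F(t)=\langle e^{tD}f,g^*\rangle\in B_\sigma^\infty(\mathbb{R})$, check that the vector series converges in $E$, and then remove the functional $g^*$ by separation. In fact you supply more detail than the paper does on the norm convergence of the series (the $O(k^{-2})$ coefficient estimate), which the paper simply asserts.
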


\begin{proof} Since for  any $g^{*}\in E^{*}$ the function $F(t)=\left<e^{tD}f,\>g^{*}\right>$ is in 
$ B_{\sigma}^{\infty}(\mathbb{R}),\>\>\>\sigma>0,$ then for all $z \in \mathbb{C}$ the  
Valiron-Tschakaloff sampling/interpolation Theorem formula \ref{classicalVT} gives

\begin{equation}
\left<e^{zD}f,\>g^{*}\right>=z \>\rm sinc\left(\frac{\sigma  z}{\pi}\right)\left<Df,\>g^{*}\right>+
$$
$$
\rm sinc\left(\frac{\sigma  z}{\pi}\right)\left<f,\>g^{*}\right>+\sum_{k\neq 0}\frac{\sigma z}{k\pi}\rm sinc\left(\frac{\sigma  z}{\pi}-k\right)\left<e^{\frac{k\pi}{\sigma}D}f,\>g^{*}\right>.
\end{equation}
Because the series 
$$
\sum_{k\neq 0}\frac{\sigma z}{k\pi}\rm sinc\left(\frac{\sigma  z}{\pi}-k\right)e^{\frac{k\pi}{\sigma}D}f,
$$
converges in $E$ for every fixed $z$ and the equality holds for every $g^{*}\in E^{*}$ we obtain the formula (\ref{VT}).

\end{proof}

\section{Boas type formulas for orbits of Bernstein vectors}\label{RB in Banach}

Motivated by formulas (\ref{Butzer1}) and (\ref{Butzer2})  we introduce the following  bounded operators

\begin{equation}\label{b1}
\mathcal{B}_{D}^{(2m-1)}(\sigma)f=\left(\frac{\sigma}{\pi}\right)^{2m-1}\sum_{k\in \mathbb{Z}}(-1)^{k+1}A_{m,k}e^{\frac{\pi}{\sigma}(k-1/2)D}f,\>\> 
\end{equation}
where $f\in E, \>\>\sigma>0,\>\>\>m\in \mathbb{N},$ along with operators 
\begin{equation}\label{b2}
\mathcal{B}_{D}^{(2m)}(\sigma)f=\left(\frac{\sigma}{\pi}\right)^{2m}\sum_{k\in \mathbb{Z}}(-1)^{k+1}B_{m,k}e^{\frac{\pi k}{\sigma}D}f, \>\>f\in E,\> \sigma>0,\>m\in \mathbb{N},
\end{equation}
  Both series converge in $E$ due to the identities (\ref{relations}) 
and   because $\|e^{tD}f\|=\|f\|$ one obtains
 
\begin{equation}\label{norms}
 \|\mathcal{B}_{D}^{(2m-1)}(\sigma)f\|\leq \sigma^{2m-1}\|f\|,\>\>\>\>\>\|\mathcal{B}_{D}^{(2m)}(\sigma)f\|\leq \sigma^{2m}\|f\|,\>\>\>f\in E. 
 \end{equation}
 
 \begin{theorem}
 The following Boas-type interpolation formulas hold true for $r\in \mathbb{N}$

\begin{equation}\label{B1 for groups}
D^{r}f=\mathcal{B}_{D}^{(r)}(\sigma)f,\>\>\>\>\>f\in \mathbf{B}_{\sigma}(D),
\end{equation}
where convergence of the series is  in the norm of $E$ uniformly with respect to $t\in \mathbb{R}$.
 \end{theorem}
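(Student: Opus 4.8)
The plan is to reduce the abstract Boas-type identity \eqref{B1 for groups} to the scalar Boas formulas \eqref{Butzer1} and \eqref{Butzer2} by testing against an arbitrary functional $g^{*}\in E^{*}$, exactly as was done for the sampling formulas in Theorems \ref{FST}, \ref{8.1}, \ref{groupVT}. First I would fix $f\in \mathbf{B}_{\sigma}(D)$ and $g^{*}\in E^{*}$ and set $F(t)=\langle e^{tD}f,\>g^{*}\rangle$. By Corollary \ref{key-col}, $F\in B_{\sigma}^{\infty}(\mathbb{R})$, and moreover $F^{(r)}(0)=\langle D^{r}f,\>g^{*}\rangle$ since differentiation of $e^{tD}f$ at $t=0$ reproduces $D^{r}f$ (all relevant vectors lie in $\mathcal{D}^{\infty}$). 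Then Theorem \ref{H-R-B-1} applies to $F$: for $r=2m-1$ it gives
$$
\langle D^{2m-1}f,\>g^{*}\rangle=F^{(2m-1)}(0)=\left(\frac{\sigma}{\pi}\right)^{2m-1}\sum_{k\in \mathbb{Z}}(-1)^{k+1}A_{m,k}\,F\!\left(\tfrac{\pi}{\sigma}(k-\tfrac12)\right),
$$
and $F(\tfrac{\pi}{\sigma}(k-\tfrac12))=\langle e^{\frac{\pi}{\sigma}(k-1/2)D}f,\>g^{*}\rangle$; the even case $r=2m$ is identical using \eqref{Butzer2} and the nodes $\tfrac{\pi k}{\sigma}$.

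Next I would promote the scalar identity to a vector identity. The key point is that the operators $\mathcal{B}_{D}^{(r)}(\sigma)$ are already known to be \emph{bounded} on all of $E$: by the identities \eqref{relations} and the isometry property $\|e^{tD}f\|=\|f\|$, the defining series \eqref{b1}, \eqref{b2} converge absolutely in $E$, and \eqref{norms} holds. Hence $\mathcal{B}_{D}^{(r)}(\sigma)f$ is a well-defined element of $E$, and by continuity of $g^{*}$ one may interchange $g^{*}$ with the convergent series:
$$
\langle \mathcal{B}_{D}^{(r)}(\sigma)f,\>g^{*}\rangle=\left(\frac{\sigma}{\pi}\right)^{r}\sum_{k}(-1)^{k+1}(\cdots)\,\langle e^{(\cdots)D}f,\>g^{*}\rangle,
$$
which by the previous paragraph equals $\langle D^{r}f,\>g^{*}\rangle$ (noting $D^{r}f\in E$ because $f\in\mathbf{B}_{\sigma}(D)\subset\mathcal{D}^{\infty}$). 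Since this holds for every $g^{*}\in E^{*}$, the Hahn--Banach theorem forces $D^{r}f=\mathcal{B}_{D}^{(r)}(\sigma)f$, which is \eqref{B1 for groups}. The uniform-in-$t$ remark is then obtained by applying the established identity to the vector $e^{tD}f$ in place of $f$ (which again lies in $\mathbf{B}_{\sigma}(D)$ since $\mathbf{B}_{\sigma}(D)$ is $e^{tD}$-invariant) and using $\|e^{tD}\cdot\|=\|\cdot\|$, so the tail estimates of the series are independent of $t$.

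The main obstacle I anticipate is purely a matter of careful bookkeeping rather than a conceptual difficulty: one must make sure that Theorem \ref{H-R-B-1} is genuinely applicable to $F$ in the pointwise (not merely $L^{p}$-norm) sense, i.e. that for $F\in B_{\sigma}^{\infty}(\mathbb{R})$ the series \eqref{Butzer1}--\eqref{Butzer2} converge pointwise and evaluate correctly at the point $0$ — this is exactly the content of the $B_{\sigma}^{\infty}$ case of that theorem, so it is available. A secondary point is the justification of $F^{(r)}(0)=\langle D^{r}f,\>g^{*}\rangle$ for all $r$, which follows by induction from $\frac{d}{dt}e^{tD}f=e^{tD}Df=De^{tD}f$ together with $D^{r}f\in\mathcal{D}^{\infty}$; this is the same computation already used implicitly in the proof of Lemma \ref{basic}. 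No new estimate beyond \eqref{norms} and the scalar Boas formula is needed.
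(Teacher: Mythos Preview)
Your proposal is correct and follows essentially the same route as the paper: test against an arbitrary functional to reduce to the scalar Boas formulas of Theorem \ref{H-R-B-1}, use the norm convergence of the defining series (via \eqref{relations} and the isometry of $e^{tD}$) to pass the functional through the sum, and then invoke the separating property of $E^{*}$. The only cosmetic difference is that the paper carries the parameter $t$ throughout and specializes to $t=0$ at the end, whereas you argue at $t=0$ first and then apply the identity to $e^{tD}f$; both yield the same uniform-in-$t$ convergence.
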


\begin{proof}
For any  $\psi^{*}\in E^{*}$
 the  function $
F(t)=\left<e^{tD}f, \psi^{*}\right>
$
is of exponential type $\sigma$ and bounded on $\mathbb{R}.$ Thus we have
$$
F^{(2m-1)}(t)=\left(\frac{\sigma}{\pi}\right)^{2m-1}\sum_{k\in \mathbb{Z}}(-1)^{k+1}A_{m,k}F\left(t+\frac{\pi}{\sigma}(k-1/2)    \right),\>\>\>m\in \mathbb{N},
$$
$$
F^{(2m)}(t)=\left(\frac{\sigma}{\pi}\right)^{2m}\sum_{k\in \mathbb{Z}}(-1)^{k+1}B_{m,k}F\left(t+\frac{\pi k}{\sigma}    \right),\>\>\>m\in \mathbb{N}.
$$
Together with 
$$
\left(\frac{d}{dt}\right)^{k}F(t)=\left<D^{k}e^{tD}f,\psi^{*}\right>,
$$
  it shows
$$
\left<e^{tD}D^{2m-1}f, \psi^{*}\right>=\left(\frac{\sigma}{\pi}\right)^{2m-1}\sum_{k\in \mathbb{Z}}(-1)^{k+1}A_{m,k}\left<e^{\left(t+\frac{\pi}{\sigma}(k-1/2)    \right)D}f,\>\>\psi^{*}\right>,\>\>\>m\in \mathbb{N},
$$
and also
$$
\left<e^{tD}D^{2m}f, \psi^{*}\right>=\left(\frac{\sigma}{\pi}\right)^{2m}\sum_{k\in \mathbb{Z}}(-1)^{k+1}B_{m,k}\left<e^{\left(t+\frac{\pi k}{\sigma}    \right)D}f,\>\> \psi^{*}\right>,\>\>\>m\in \mathbb{N}.
$$
Since both series  converge in $E$, and the last two equalities hold for any $\psi^{*}\in E$ we obtain the next two formulas

\begin{equation}\label{sam1}
e^{tD}D^{2m-1}f=\left(\frac{\sigma}{\pi}\right)^{2m-1}\sum_{k\in \mathbb{Z}}(-1)^{k+1}A_{m,k}e^{\left(t+\frac{\pi}{\sigma}(k-1/2)   \right)D}f,\>\>\>m\in \mathbb{N},
\end{equation}
\begin{equation}\label{sam2}
e^{tD}D^{2m}f=\left(\frac{\sigma}{\pi}\right)^{2m}\sum_{k\in \mathbb{Z}}(-1)^{k+1}B_{m,k}e^{\left(t+\frac{\pi k}{\sigma}    \right)D}f, \>\>\>m\in \mathbb{N}.
\end{equation}
In turn when $t=0$ these formulas become formulas (\ref{B1}).  
Theorem is proven.
\end{proof}

As a summary of  sections \ref{Bernstein-vectors}-\ref{RB in Banach} we can formulate the following statement.

 \begin{theorem}\label{main}
 If $D$ generates a one-parameter strongly continuous  group of isometries   $e^{tD}$ in a Banach space $E$ then  the following conditions are equivalent:

\begin{enumerate}

\item  $f$ belongs to $\mathbf{B}_{\sigma}(D),\>\>0<\sigma<\infty$.

\item   The abstract-valued function $e^{tD}f$  is entire abstract-valued function of exponential type $\sigma$ which is bounded on the real line.

\item For every functional $g^{*}\in E^{*}$ the function $\left<e^{tD}f,\>g^{*}\right>$  is entire function of exponential type $\sigma$ which is bounded on the real line, i.e. it belongs to the regular $B_{\sigma}^{\infty}(\mathbb{R})$.
\item The following Boas-type interpolation formulas hold true for $r\in \mathbb{N}$

\begin{equation}\label{B1}
D^{r}f=\mathcal{B}_{D}^{(r)}(\sigma)f.
\end{equation}

\item The following limit exists
 \begin{equation}
 d_f=\lim_{k\rightarrow \infty} \|D^k
f\|^{1/k} \label{limit}<\infty,
\end{equation}
and $d_f$ coincides with the lower boundary of all $\sigma>0$  such that $f\in {\bf B}_{\sigma}(D)$.
\end{enumerate}
\end{theorem}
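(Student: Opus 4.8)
The plan is to establish the chain of equivalences by proving a cycle of implications, drawing on the corollaries to Lemma~\ref{basic} and the theorems already developed in sections~\ref{sampling in Banach spaces}--\ref{RB in Banach}. Specifically, I would show $(1)\Rightarrow(2)\Rightarrow(3)\Rightarrow(1)$, then $(1)\Rightarrow(4)\Rightarrow(1)$, and finally $(1)\Leftrightarrow(5)$, which is exactly Theorem~\ref{new}. Most of the work has in fact already been done; the task here is mainly to assemble the pieces and verify the return arrows.

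For $(1)\Rightarrow(2)$ and $(1)\Rightarrow(3)$: if $f\in\mathbf{B}_{\sigma}(D)$, then by Corollary~\ref{key-col} the abstract-valued function $e^{tD}f$ is bounded on $\mathbb{R}$ and extends to an entire function of exponential type $\sigma$ (this is statement~(2)), and for each $g^{*}\in E^{*}$ the scalar function $\langle e^{tD}f, g^{*}\rangle$ lies in $B_{\sigma}^{\infty}(\mathbb{R})$ (statement~(3)). The implication $(2)\Rightarrow(3)$ is immediate since pairing an entire $E$-valued function of exponential type $\sigma$ bounded on $\mathbb{R}$ with any bounded functional yields a scalar entire function of exponential type $\sigma$ bounded on $\mathbb{R}$. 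For the key return arrow $(3)\Rightarrow(1)$: assume $\langle e^{tD}f, g^{*}\rangle\in B_{\sigma}^{\infty}(\mathbb{R})$ for every $g^{*}$. The classical Bernstein inequality~(\ref{Bern-1}) applied iteratively gives $\left|\left(\frac{d}{dt}\right)^{k}\langle e^{tD}f, g^{*}\rangle\right|\leq\sigma^{k}\|g^{*}\|\,\|f\|$; evaluating at $t=0$ and noting $\left(\frac{d}{dt}\right)^{k}\langle e^{tD}f,g^{*}\rangle\big|_{t=0}=\langle D^{k}f,g^{*}\rangle$ (one must first check $f$ is in the domain of each $D^{k}$, which follows because the weak derivatives exist and $D^{k}$ is closed), then choosing $g^{*}$ with $\|g^{*}\|=1$ and $\langle D^{k}f,g^{*}\rangle=\|D^{k}f\|$ yields $\|D^{k}f\|\leq\sigma^{k}\|f\|$, i.e. $f\in\mathbf{B}_{\sigma}(D)$. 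This is essentially the argument already used in the proof of Lemma~\ref{basic}.

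For $(1)\Rightarrow(4)$: this is precisely the content of the theorem proving (\ref{B1 for groups}), where the Butzer higher-order formulas (\ref{Butzer1})--(\ref{Butzer2}) are lifted to the group setting by pairing with functionals, using that the operators $\mathcal{B}_{D}^{(r)}(\sigma)$ are bounded by (\ref{norms}). For $(4)\Rightarrow(1)$: if $D^{r}f=\mathcal{B}_{D}^{(r)}(\sigma)f$ holds for all $r\in\mathbb{N}$, then the bound (\ref{norms}) gives directly $\|D^{r}f\|=\|\mathcal{B}_{D}^{(r)}(\sigma)f\|\leq\sigma^{r}\|f\|$, which is the Bernstein inequality (\ref{Bernstein}); one needs only to note that the defining series for $\mathcal{B}_{D}^{(r)}(\sigma)f$ converges for every $f\in E$, so the hypothesis makes sense and already forces $f\in\mathcal{D}^{\infty}$ through the equality. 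Finally $(1)\Leftrightarrow(5)$ is Theorem~\ref{new} together with the identification of $d_f$ with $\sigma_f$, the infimum of all admissible $\sigma$.

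The main obstacle is the implication $(3)\Rightarrow(1)$, and within it the technical point that one must justify differentiating under the functional and identifying the $k$-th derivative at $t=0$ with $D^{k}f$ while simultaneously establishing $f\in\mathcal{D}^{\infty}$. The cleanest route is to argue inductively: weak boundedness of the difference quotients $\frac{e^{tD}f-f}{t}$ together with weak-star compactness arguments (or, more simply, the closedness of $D$ combined with the fact that $\langle e^{tD}f,g^{*}\rangle$ is genuinely differentiable at $0$ with derivative $\langle Df,g^{*}\rangle$ whenever $f\in\mathcal{D}^{1}$, bootstrapped upward) shows $f\in\mathcal{D}^{k}$ for each $k$. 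Everything else is bookkeeping: the forward implications are already proved in the cited results, and the return arrows $(4)\Rightarrow(1)$ and $(5)\Rightarrow(1)$ are one-line consequences of the norm bound (\ref{norms}) and Lemma~\ref{basic} respectively.
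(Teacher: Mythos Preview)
Your proposal is correct and matches the paper's approach exactly: the paper presents Theorem~\ref{main} explicitly as ``a summary of sections~\ref{Bernstein-vectors}--\ref{RB in Banach}'' with no separate proof, so the intended argument is precisely the assembly job you describe, citing Corollary~\ref{key-col}, the Boas-type theorem~(\ref{B1 for groups}) with the norm bounds~(\ref{norms}), and Theorem~\ref{new}. You have in fact gone further than the paper by singling out the technical point in $(3)\Rightarrow(1)$---that one must verify $f\in\mathcal{D}^{\infty}$ before invoking the duality argument of Lemma~\ref{basic}---which the paper leaves implicit.
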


\section{Some harmonic analysis associated with the Discrete Hilbert transform}\label{Analysis with HT}

We will be interested in the operator $H=\pi \widetilde{H}$ where $\widetilde{H}$ is 
 the discrete Hilbert transform operator
 $$
\widetilde{H}: l^{2}\mapsto l^{2},\>\>\>\widetilde{H}{\bf a}={\bf b},\>\>\> {\bf a}=\{a_{j}\}\in l^{2},\>\>\>{\bf b}=\{b_{m}\}\in l^{2},
$$
which is defined by the formula
\begin{equation}
b_{m}=\frac{1}{\pi}\sum_{n\neq m,\>n\in \mathbb{Z}}\frac{a_{n}}{m-n},\>\>\>m\in \mathbb{Z}.
\end{equation}

As it was proved  by Schur \cite{Schur} the operator norm of $\widetilde{H}: l^{2}\mapsto l^{2}$ is one and therefore the operator norm of $H$ is $\pi$. In what follows the notation $\|\cdot\|$ will always mean $\|\cdot\|_{l^{2}}$.
It was shown in \cite{Graf}  that although the norm of the operator $H$ is $\pi$, only a strong inequality $\|H{\bf a}\|<\pi\|{\bf a}\|$ can hold
for every non-trivial ${\bf a}\in l^{2}$. 
Since $H$ is a bounded operator the following exponential series converges in $l^{2}$ for every ${\bf a}\in l^{2}$ and every $t\in \mathbb{R}$
\begin{equation}
e^{tH}{\bf a}=\sum_{k=0}^{\infty}\frac{H^{k}{\bf a}}{k!}t^{k}.
\end{equation}
Thus $H$ is a generator of a one-parameter group of operators  $e^{tH}, \>t\in \mathbb{R}$.
It is clear that for  a general bounded operator $A$ the exponent can be extended to the entire complex plane $\mathbb{C}$ and one has the estimate 
\begin{equation}\label{bound}
\|e^{zA}\|\leq \sum_{k=0}^{\infty}\frac{\|A\|^{k}|z|^{k}}{k!}= e^{\|A\||z|},\>\>\>z\in \mathbb{C}.
\end{equation}
In the paper \cite{DC} about the group $e^{tH}$ the following results were obtained:
\begin{enumerate}
\item the explicit formulas for the operators $e^{tH}$ were given,

\item it was shown that every operator $e^{tH}$ is an isometry in $l^{2}$.

\end{enumerate}
The explicit formulas are given in the next statement.
\begin{theorem}The operator $H$ generates in $l^{2}$ a one-parameter group of isometries $ e^{tH}{\bf a}={\bf b},\>\>\>{\bf a}=(a_{n})\in l^{2}, \>\>\>{\bf b}=(b_{m})\in l^{2}, $ which is given by the formulas
$$
b_{m}=\frac{\sin(\pi t)}{\pi}\sum_{n\in \mathbb{Z}}\frac{a_{n}}{m-n+t},\>\>\>
$$
if $t\in \mathbb{R}\setminus \mathbb{Z}$, and 
$$
b_{m}=(-1)^{t}a_{m+t}, \>\>
$$
if $t\in \mathbb{Z}$.

\end{theorem}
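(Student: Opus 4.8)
The plan is to pass to the Fourier-series realization of $l^{2}$ and identify $H$ there as a Fourier multiplier, after which all three assertions become elementary computations. Let $\Phi\colon l^{2}\to L^{2}(-\pi,\pi)$, $\Phi({\bf a})(\theta)=\sum_{n\in\mathbb{Z}}a_{n}e^{in\theta}$, be the usual unitary identification. The operator $\widetilde H$ is convolution with the sequence $k_{j}=\frac{1}{\pi j}$ ($j\neq 0$, $k_{0}=0$), so under $\Phi$ it becomes multiplication by $\widehat k(\theta)=\frac{1}{\pi}\sum_{j\neq 0}\frac{e^{ij\theta}}{j}$. Using the classical expansion $\sum_{j\neq 0}\frac{e^{ij\theta}}{j}=i(\pi-\theta)$, valid for $\theta\in(0,2\pi)$ (it is $\log\frac{1-e^{-i\theta}}{1-e^{i\theta}}$ with the principal branch of the logarithm), one sees that $H=\pi\widetilde H$ corresponds to multiplication by the $2\pi$-periodic sawtooth
$$
m_{H}(\theta)=\begin{cases} i(\pi-\theta),& 0<\theta<\pi,\\ -i(\pi+\theta),& -\pi<\theta<0,\end{cases}
$$
which is purely imaginary and bounded, with $\sup_{\theta}|m_{H}(\theta)|=\pi$ and $|m_{H}(\theta)|<\pi$ off the single point $\theta=0$.

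Since $H$ is a bounded operator, $t\mapsto e^{tH}$ is norm-continuous (in particular of class $C_{0}$) and $e^{(s+t)H}=e^{sH}e^{tH}$ by the standard rearrangement of the exponential series, which is the group property. The kernel $k_{j}=\frac1{\pi j}$ is antisymmetric, hence $\widetilde H^{*}=-\widetilde H$ and $H^{*}=-H$; therefore $(e^{tH})^{*}=e^{-tH}=(e^{tH})^{-1}$ and each $e^{tH}$ is a unitary, hence isometric, operator on $l^{2}$. (The same skew-adjointness, or the bound $|m_{H}(\theta)|<\pi$ a.e., also yields the strict inequality $\|H{\bf a}\|<\pi\|{\bf a}\|$ for every ${\bf a}\neq 0$.)

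It remains to make the formulas explicit. Under $\Phi$ the operator $e^{tH}$ is multiplication by $e^{tm_{H}(\theta)}$, so $e^{tH}{\bf a}={\bf b}$ with $b_{m}=\sum_{n}g^{t}_{m-n}a_{n}$, where $g^{t}_{j}=\frac{1}{2\pi}\int_{-\pi}^{\pi}e^{tm_{H}(\theta)}e^{-ij\theta}\,d\theta$. Splitting this integral at $\theta=0$ reduces it to two elementary exponential integrals; adding them, the contributions proportional to $(-1)^{j}$ cancel and one is left with
$$
g^{t}_{j}=\frac{e^{i\pi t}-e^{-i\pi t}}{2\pi i\,(j+t)}=\frac{\sin(\pi t)}{\pi\,(j+t)},\qquad t\in\mathbb{R}\setminus\mathbb{Z}.
$$
Setting $j=m-n$ gives the first stated formula; the series $\sum_{n}\frac{a_{n}}{m-n+t}$ converges absolutely for $t\notin\mathbb{Z}$ by Cauchy--Schwarz, and it represents $e^{tH}{\bf a}$ because the corresponding identity holds in $L^{2}(-\pi,\pi)$ under $\Phi$. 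When $t\in\mathbb{Z}$ the sawtooth collapses: on $(0,\pi)$ we have $e^{tm_{H}(\theta)}=e^{it(\pi-\theta)}=(-1)^{t}e^{-it\theta}$, and the same on $(-\pi,0)$, so $e^{tH}$ equals $(-1)^{t}$ times the shift ${\bf a}\mapsto(a_{m+t})_{m}$, i.e. $b_{m}=(-1)^{t}a_{m+t}$.

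The one place that genuinely requires care is the identification of $m_{H}$: one must use the correct branch of the logarithm and the $2\pi$-periodic reduction of $\pi-\theta$, and it is precisely this that produces the sawtooth shape, hence the denominator $j+t$ (not $j-t$) in the non-integer case and the sign $(-1)^{t}$ in the integer case. A useful cross-check is to differentiate the explicit kernel at $t=0$: for $n\neq m$ one gets $\frac{d}{dt}\big|_{t=0}\frac{\sin\pi t}{\pi(m-n+t)}=\frac{1}{m-n}$, while the diagonal term $\frac{\sin\pi t}{\pi t}$ tends to $1$ with vanishing derivative, so the right-hand sides reduce to $I$ at $t=0$ and have $H$ as their infinitesimal generator, as they should.
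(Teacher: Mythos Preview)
The paper does not prove this theorem; it quotes the result from De~Carli and Samad \cite{DC} and moves on. So there is no ``paper's own proof'' to compare against, and your argument must stand on its own---which it does.

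Your approach via the Fourier realization $\Phi\colon l^{2}\to L^{2}(-\pi,\pi)$ is the natural one and is essentially the argument of \cite{DC} as well. The identification of the symbol $m_{H}$ as the purely imaginary sawtooth is correct (the series $\sum_{j\ge 1}\frac{\sin j\theta}{j}=\frac{\pi-\theta}{2}$ on $(0,2\pi)$ is exactly what is needed), and from that the skew-adjointness of $H$ and hence the unitarity of $e^{tH}$ follow immediately. Your computation of the Fourier coefficients $g^{t}_{j}$ of $e^{tm_{H}(\theta)}$ is also correct: splitting at $\theta=0$, the boundary terms proportional to $(-1)^{j}$ indeed cancel and one is left with $g^{t}_{j}=\frac{\sin\pi t}{\pi(j+t)}$ for $t\notin\mathbb{Z}$, while for $t\in\mathbb{Z}$ the multiplier collapses to $(-1)^{t}e^{-it\theta}$, i.e.\ $(-1)^{t}$ times the shift by $t$. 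The Cauchy--Schwarz justification of absolute convergence and the derivative check at $t=0$ are both in order.

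One cosmetic remark: your claim that $\Phi$ is ``the usual unitary identification'' is slightly off with the normalization you wrote, since $\|\Phi({\bf a})\|_{L^{2}(-\pi,\pi)}^{2}=2\pi\|{\bf a}\|_{l^{2}}^{2}$; this is harmless for the argument (the multiplier calculus is unaffected by an overall constant), but if you want to call it unitary you should either include a $\frac{1}{\sqrt{2\pi}}$ or use the normalized measure $d\theta/2\pi$ on $(-\pi,\pi)$.
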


We note that since $H$ is a bounded operator whose norm is $\pi$ one has for all ${\bf a}\in l^{2}$  the following Bernstein-type  inequality 
\begin{equation}\label{bern-1}
\|H^{k}{\bf a}\|\leq \pi^{k}\|{\bf a }\|.
\end{equation}
In terminology of chapter 3 it means that 
\textit{the entire space $l^{2}$ is the Bernstein space ${\bf B}_{\pi}(H)$.}

\bigskip

The next Theorem is a consequence of Theorem \ref{main}.

\begin{theorem}\label{Hilbert-Bern-space-functional}

 The following holds true.
\begin{enumerate}

\item For every ${\bf a}\in l^{2}$
$$
e^{tH}{\bf a}: \mathbb{R}\mapsto l^{2}
$$
is an abstract valued function which has extension $ e^{z H}{\bf a},\>\>z\in \mathbb{C},$ to the complex plane as an entire function of the exponential type $\pi$ which is bounded on the real line.
\item 

For every ${\bf a},
{\bf a}^{*}\in l^{2}$  the function 
$$
\Phi(t)=\langle e^{t H}{\bf a}, {\bf a}^{*}\rangle,\>\>\>\>t\in \mathbb{R},
$$
 belongs to the Bernstein class $B_{\pi}^{\infty}(\mathbb{R}).$ 

\end{enumerate}
 
 \end{theorem}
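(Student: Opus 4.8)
The plan is to derive Theorem \ref{Hilbert-Bern-space-functional} directly from Theorem \ref{main} by checking that the hypotheses of that theorem are met in the present concrete situation. The key observation, already recorded in the text just above the statement, is the Bernstein-type inequality (\ref{bern-1}): for every $\mathbf{a}\in l^{2}$ one has $\|H^{k}\mathbf{a}\|\leq \pi^{k}\|\mathbf{a}\|$ for all $k\in\mathbb{N}$. In the terminology of Section \ref{Bernstein-vectors} this says precisely that $\mathbf{a}\in\mathbf{B}_{\pi}(H)$, i.e. the whole space $l^{2}$ equals $\mathbf{B}_{\pi}(H)$. Since $H$ is a bounded operator of norm $\pi$, it generates a one-parameter group $e^{tH}$, and the paper has already cited \cite{DC} for the fact that each $e^{tH}$ is an isometry of $l^{2}$; thus the standing hypothesis of Theorem \ref{main} — that $D=H$ generates a strongly continuous one-parameter group of isometries — is satisfied.

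With $E=l^{2}$, $D=H$, $\sigma=\pi$, and every $\mathbf{a}\in l^{2}$ lying in $\mathbf{B}_{\pi}(H)$, I would simply invoke the equivalence ``(1) $\Leftrightarrow$ (2) $\Leftrightarrow$ (3)'' of Theorem \ref{main}. Condition (2) there gives part (1) of the present theorem: the abstract-valued function $t\mapsto e^{tH}\mathbf{a}$ extends to $\mathbb{C}$ as an entire $l^{2}$-valued function of exponential type $\pi$, bounded on the real line. Condition (3) gives part (2): for every $\mathbf{a}^{*}\in l^{2}\cong (l^{2})^{*}$ the scalar function $\Phi(t)=\langle e^{tH}\mathbf{a},\mathbf{a}^{*}\rangle$ belongs to the regular Bernstein space $B_{\pi}^{\infty}(\mathbb{R})$. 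One small point worth noting explicitly: $l^{2}$ is a Hilbert space, so we identify $(l^{2})^{*}$ with $l^{2}$ via the inner product, which is why the functionals in Theorem \ref{main} become elements $\mathbf{a}^{*}\in l^{2}$ in the pairing $\langle\,\cdot\,,\mathbf{a}^{*}\rangle$.

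Alternatively, and perhaps more transparently for the reader, part (1) can be seen without even appealing to Theorem \ref{main}: since $H$ is bounded, the series $e^{zH}\mathbf{a}=\sum_{k\geq 0}z^{k}H^{k}\mathbf{a}/k!$ converges in $l^{2}$ for all $z\in\mathbb{C}$ and defines an entire $l^{2}$-valued function, and the estimate (\ref{bound}) with $A=H$ gives $\|e^{zH}\mathbf{a}\|\leq e^{\pi|z|}\|\mathbf{a}\|$, which is exponential type $\pi$; boundedness on $\mathbb{R}$ follows from the isometry property $\|e^{tH}\mathbf{a}\|=\|\mathbf{a}\|$ for real $t$. Part (2) then follows by composing with the continuous linear functional $\langle\,\cdot\,,\mathbf{a}^{*}\rangle$: $\Phi(z)=\langle e^{zH}\mathbf{a},\mathbf{a}^{*}\rangle$ is entire, and $|\Phi(z)|\leq \|\mathbf{a}^{*}\|\,e^{\pi|z|}\|\mathbf{a}\|$ while $|\Phi(t)|\leq\|\mathbf{a}^{*}\|\,\|\mathbf{a}\|$ on $\mathbb{R}$, so $\Phi\in B_{\pi}^{\infty}(\mathbb{R})$ by the Paley-Wiener characterization recalled in Section \ref{Bern-spaces}.

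There is essentially no obstacle here: the theorem is a specialization, and the only thing to be careful about is the bookkeeping — confirming that $l^{2}=\mathbf{B}_{\pi}(H)$ from (\ref{bern-1}), that the group is a group of isometries so that the boundedness-on-$\mathbb{R}$ part of the type-$\pi$ conclusion is genuine, and that the reflexivity/self-duality of $l^{2}$ lets us write the functionals as elements of $l^{2}$. Once those are in place, both assertions are immediate consequences of Theorem \ref{main} (items (2) and (3)), so I would present the proof as a two-line deduction, optionally including the self-contained argument of the previous paragraph as a remark for the reader who wants to see it concretely.
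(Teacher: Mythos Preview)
Your proposal is correct and matches the paper's approach exactly: the paper simply states that the theorem is a consequence of Theorem \ref{main}, and you have spelled out precisely the verification needed (namely that $l^{2}=\mathbf{B}_{\pi}(H)$ via (\ref{bern-1}) and that $e^{tH}$ is a group of isometries). Your optional self-contained argument via the power series and the bound (\ref{bound}) is a nice addition but not required.
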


\section{Sampling theorems  for orbits of the one-parameter group generated by the Discrete Hilbert transform}\label{H-sampling}

By using Theorem \ref{smaller sampling} and Theorem \ref{Hilbert-Bern-space-functional} we obtain 
our first sampling theorem. 
\begin{theorem}\label{FST}For  every ${\bf a},\>{\bf a}^{*}\in l^{2}$, every $0<\gamma< 1,$ and every $z\in \mathbb{C}$ one has

\begin{equation}\label{1-2}
\left\langle e^{zH}{\bf a}, {\bf a}^{*}\right\rangle=\sum_{k\in \mathbb{Z}} \left\langle e^{(\gamma k) H }{\bf a}, {\bf a}^{*}\right\rangle\>
sinc\left(\gamma^{-1}z -k\right),
\end{equation}
where the series converges uniformly on compact subsets of $\mathbb{C}$.
\end{theorem}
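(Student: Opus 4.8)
The plan is to reduce the abstract statement to the scalar sampling theorem already available in the excerpt, namely Theorem \ref{smaller sampling}. By Theorem \ref{Hilbert-Bern-space-functional}, part (2), for any fixed ${\bf a},{\bf a}^{*}\in l^{2}$ the scalar function
$$
\Phi(t)=\langle e^{tH}{\bf a},{\bf a}^{*}\rangle
$$
belongs to the regular Bernstein space $B_{\pi}^{\infty}(\mathbb{R})$, and by part (1) it extends to an entire function of exponential type $\pi$ (which is simply $\langle e^{zH}{\bf a},{\bf a}^{*}\rangle$). Since $\Phi\in B_{\pi}^{\infty}=B_{\tau}^{\infty}$ with $\tau=\pi$, and we want to sample at rate corresponding to bandwidth $\sigma=\pi/\gamma>\pi$ for $0<\gamma<1$, we have $\tau=\pi<\sigma$, so $\Phi$ lies in $B_{\tau}^{\infty}$ for some $\tau<\sigma$. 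This is precisely the hypothesis of Theorem \ref{smaller sampling}.

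Concretely, I would apply Theorem \ref{smaller sampling} with $\sigma$ replaced by $\pi/\gamma$ and $f=\Phi$: since $\Phi\in B_{\pi}^{\infty}$ and $\pi<\pi/\gamma$, the theorem yields, for every $z\in\mathbb{C}$,
$$
\Phi(z)=\sum_{k\in\mathbb{Z}}\Phi\!\left(\frac{k\pi}{\pi/\gamma}\right) sinc\!\left(\frac{\pi/\gamma}{\pi}z-k\right)=\sum_{k\in\mathbb{Z}}\Phi(\gamma k)\, sinc\!\left(\gamma^{-1}z-k\right),
$$
with uniform convergence on compact subsets of $\mathbb{C}$. Translating back through the definition of $\Phi$ gives exactly
$$
\langle e^{zH}{\bf a},{\bf a}^{*}\rangle=\sum_{k\in\mathbb{Z}}\langle e^{(\gamma k)H}{\bf a},{\bf a}^{*}\rangle\, sinc\!\left(\gamma^{-1}z-k\right),
$$
which is formula (\ref{1-2}), and the convergence statement is inherited verbatim from Theorem \ref{smaller sampling}.

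The argument is essentially bookkeeping: the genuine analytic content — that the orbit functionals lie in the appropriate Bernstein class, and that oversampling (sampling a type-$\pi$ function at the denser rate dictated by type $\pi/\gamma$) is legitimate — has already been established in Theorem \ref{Hilbert-Bern-space-functional} and Theorem \ref{smaller sampling} respectively. The only point requiring a moment's care is the strict inequality $\tau<\sigma$: here $\tau=\pi$ is the exact type of $\Phi$ and $\sigma=\pi/\gamma$, so $0<\gamma<1$ gives $\pi<\pi/\gamma$ strictly, which is exactly what Theorem \ref{smaller sampling} demands; the endpoint case $\gamma=1$ is excluded precisely because the plain Whittaker--Kotel'nikov--Shannon formula at the critical rate can fail for $B_{\pi}^{\infty}$ functions (cf. the Remark following Theorem \ref{sampling1}). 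I would also note explicitly that the nodes $\gamma k$ are real, so all the sampled values $\langle e^{(\gamma k)H}{\bf a},{\bf a}^{*}\rangle$ are well-defined via the group $e^{tH}$ on $l^{2}$. No further estimates are needed, so the proof is short.

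\begin{proof}
Fix ${\bf a},{\bf a}^{*}\in l^{2}$ and set $\Phi(t)=\langle e^{tH}{\bf a},{\bf a}^{*}\rangle$. By Theorem \ref{Hilbert-Bern-space-functional}, $\Phi$ belongs to $B_{\pi}^{\infty}(\mathbb{R})$ and extends to an entire function of exponential type $\pi$, given by $z\mapsto\langle e^{zH}{\bf a},{\bf a}^{*}\rangle$. Let $0<\gamma<1$ and put $\sigma=\pi/\gamma$, so that $\pi<\sigma$. Applying Theorem \ref{smaller sampling} to $f=\Phi$ with this $\sigma$ (the hypothesis $f\in B_{\tau}^{\infty}$ for some $0\le\tau<\sigma$ holds with $\tau=\pi$), we obtain for every $z\in\mathbb{C}$
$$
\Phi(z)=\sum_{k\in\mathbb{Z}}\Phi\!\left(\frac{k\pi}{\sigma}\right) sinc\!\left(\frac{\sigma}{\pi}z-k\right)=\sum_{k\in\mathbb{Z}}\Phi(\gamma k)\, sinc\!\left(\gamma^{-1}z-k\right),
$$
with uniform convergence on compact subsets of $\mathbb{C}$. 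Substituting the definition of $\Phi$ yields
$$
\left\langle e^{zH}{\bf a},{\bf a}^{*}\right\rangle=\sum_{k\in\mathbb{Z}}\left\langle e^{(\gamma k)H}{\bf a},{\bf a}^{*}\right\rangle sinc\!\left(\gamma^{-1}z-k\right),
$$
which is (\ref{1-2}), with the stated convergence.
\end{proof}
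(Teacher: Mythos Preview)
Your proof is correct and follows exactly the approach the paper takes: the paper's own argument is the single sentence ``By using Theorem \ref{smaller sampling} and Theorem \ref{Hilbert-Bern-space-functional} we obtain our first sampling theorem,'' and you have simply made explicit the substitution $\sigma=\pi/\gamma$, $\tau=\pi$ that this sentence leaves implicit.
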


Explicitly, the formula (\ref{1-2}) means that 
if $z$ in (\ref{1-2}) is a real $z=t$ which is not an integer then 
  (\ref{1-2}) takes the form
\begin{equation}
\frac{\sin \pi t}{\pi}\sum_{m\in \mathbb{Z}} \sum_{n\in \mathbb{Z}\>\>n\neq m}   \frac{a_{n}b_{m}} {m-n+t}=S_{1}+S_{2}, 
\end{equation}
where
$$
S_{1}=\sum_{k\in \mathbb{Z},\> \gamma k\in \mathbb{R}\setminus\mathbb{Z}}\frac{\sin \pi \gamma k}{\pi}\left(    \sum_{m\in \mathbb{Z}}\sum_{n\in \mathbb{Z}\>\>n\neq m}  \frac{a_{n}b_{m}}{m-n+\gamma k      } \right)\>sinc(\gamma^{-1}t-k),
$$
and 
$$
S_{2}=\sum_{k\in \mathbb{Z};\>\gamma k\in\mathbb{Z}} \left( \sum_{m\in \mathbb{Z}}(-1)^{\gamma k}a_{m+\gamma k}b_{m}\right)\>
sinc(\gamma^{-1}t-k).
$$

Next, if $z=t$ in (\ref{1-2}) is an integer   then the formula  (\ref{1-2}) is given by 
$$
\sum_{m\in \mathbb{Z}}(-1)^{t}a_{m+t}b_{m}=S_{1}+S_{2}.
$$
We note, that if $t=\gamma N,\>\>N\in \mathbb{Z}$, then (\ref{1-2}) is evident since its both sides are (obviously) identical:
$$
\langle e^{tH}{\bf a}, {\bf a}^{*}\rangle=\langle e^{tH}{\bf a}, {\bf a}^{*}\rangle.
$$
The next statement is a consequence of Theorem \ref{8.1}.
\begin{theorem}\label{SST}For every ${\bf a}\in l^{2}$
\begin{equation}\label{2-2}
e^{tH}{\bf a}=
 {\bf a}+t\>sinc \>\left(t\right) H{\bf  a}\>+
t\sum_{k\in \mathbb{Z}\setminus \{0\}}  \frac{e^{kH}{\bf a}-{\bf a}}{k}
\>
sinc\left(t -k\right),
\end{equation}
where the series converges in the norm of $l^{2}$ uniformly with respect to $t\in \mathbb{R}$.

\end{theorem}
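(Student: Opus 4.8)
The plan is to derive Theorem \ref{SST} as a direct specialization of Theorem \ref{8.1} to the concrete setting $E = l^{2}$, $D = H$, $\sigma = \pi$. Recall from the discussion preceding Theorem \ref{Hilbert-Bern-space-functional} that $H$ generates a one-parameter group of isometries on $l^{2}$ and that, because $\|H\| = \pi$, every ${\bf a} \in l^{2}$ satisfies the Bernstein inequality $\|H^{k}{\bf a}\| \leq \pi^{k}\|{\bf a}\|$; hence the entire space $l^{2}$ equals ${\bf B}_{\pi}(H)$. So the hypothesis of Theorem \ref{8.1} is met by \emph{every} ${\bf a} \in l^{2}$, with $\sigma = \pi$.

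First I would write down formula (\ref{s1}) verbatim with $D = H$, $f = {\bf a}$, and $\sigma = \pi$. The factor $\sigma/\pi$ becomes $1$, the node $k\pi/\sigma$ becomes $k$, and $\mathrm{sinc}(\sigma t/\pi - k)$ becomes $\mathrm{sinc}(t - k)$; the term $t D f\,\mathrm{sinc}(\sigma t/\pi)$ becomes $t\,\mathrm{sinc}(t)\,H{\bf a}$. Thus (\ref{s1}) reads
\begin{equation*}
e^{tH}{\bf a} = {\bf a} + t\,\mathrm{sinc}(t)\,H{\bf a} + t\sum_{k \neq 0}\frac{e^{kH}{\bf a} - {\bf a}}{k}\,\mathrm{sinc}(t - k),
\end{equation*}
which is exactly (\ref{2-2}); the uniform-in-$t$ convergence in the $l^{2}$-norm is the convergence statement already asserted in Theorem \ref{8.1}. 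Strictly speaking the only thing to verify is that the hypothesis ``$H$ generates a one-parameter strongly continuous group of isometries'' and ``${\bf a} \in {\bf B}_{\pi}(H)$'' both hold, and both have been established earlier in the excerpt (the isometry property via \cite{DC}, the Bernstein membership via Schur's bound $\|H\| = \pi$).

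There is essentially no obstacle here: the proof is a substitution. If I wanted to spell more out, I could optionally re-derive the convergence of the series $\sum_{k\neq 0} \frac{e^{kH}{\bf a} - {\bf a}}{k}\mathrm{sinc}(t-k)$ in $l^{2}$ directly — each summand has norm $O(1/k) \cdot O(1/|t-k|)$ since $\mathrm{sinc}^{(m)}$ decays like $O(x^{-1})$ (as noted in the proof of Theorem \ref{8.1}), and $\|e^{kH}{\bf a} - {\bf a}\| \leq 2\|{\bf a}\|$ — but this is already subsumed. The one genuinely minor point worth a sentence is that $t\,\mathrm{sinc}(t) = \sin(t)\cdot(\text{something})$ vanishes at nonzero integers, so at $t \in \mathbb{Z}\setminus\{0\}$ the formula collapses to the interpolatory identity $e^{tH}{\bf a} = {\bf a} + t\cdot\frac{e^{tH}{\bf a}-{\bf a}}{t} = e^{tH}{\bf a}$ (after accounting for $\mathrm{sinc}(t-k) = \delta_{t,k}$), which is a useful sanity check but not part of the proof obligation. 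So the write-up is: invoke that $l^{2} = {\bf B}_{\pi}(H)$, apply Theorem \ref{8.1} with $\sigma = \pi$, simplify the constants.
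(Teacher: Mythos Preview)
Your proposal is correct and matches the paper's approach exactly: the paper simply states that Theorem \ref{SST} is a consequence of Theorem \ref{8.1}, and your write-up carries out precisely that specialization (setting $D=H$, $\sigma=\pi$, and using $l^{2}=\mathbf{B}_{\pi}(H)$).
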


Next, we reformulate (\ref{2-2}) in its "native" terms.

\begin{theorem}
If ${\bf a }=(a_{n})\in l^{2}$  and $t$ is not integer then the left-hand side of (\ref{2-2}) is a sequence $e^{tH}{\bf a}={\bf b}=(b_{m})\in l^{2}$ 
with the entries 
\begin{equation}\label{Sec-1}
b_{m}=\frac{\sin \pi t}{\pi}\sum_{n\in \mathbb{Z}}   \frac{a_{n}} {m-n+t},
\end{equation}
and the right-hand side represents a sequence ${\bf c}=(c_{m})\in l^{2}$  with the entries
\begin{equation}\label{sec-2}
c_{m}=a_{m}+
t\>sinc(t)\sum_{n\in \mathbb{Z}, n\neq m}\frac{a_{n}}{m-n}+
$$
$$
t\sum_{k\in \mathbb{Z}\setminus \{0\}}  
\frac{  (-1)^{k}a_{m+k}-a_{m}}{k}\>
sinc\left(t -k\right).
\end{equation}
If $t$ in  (\ref{2-2}) is an integer $t=N$  then $b_{m}=(-1)^{N}a_{m+N}$ and 
$$
c_{m}=a_{m}+
N\sum_{n\in \mathbb{Z}, n\neq m}\frac{a_{n}}{m-n}\>sinc(N)+
$$
$$
N\sum_{k\neq 0}  
\frac{  (-1)^{k}a_{m+k}-a_{m}}{k}\>
sinc\left(N -k\right)=(-1)^{N}a_{m+N}.
$$
Thus in the case when $t=N$ is an integer we obtain just a tautology
$$
b_{m}=(-1)^{N}a_{m+N}=c_{m}.
$$
\end{theorem}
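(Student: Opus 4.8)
The plan is to deduce both assertions from the abstract identity (\ref{2-2}) by pure transcription into coordinates. Identity (\ref{2-2}) has already been established — it is Theorem \ref{SST}, which is a consequence of Theorem \ref{8.1} applied with $E=l^{2}$, $D=H$ and $\sigma=\pi$ — and, importantly, the series on its right-hand side converges in the norm of $l^{2}$; in particular its sum ${\bf c}$ lies in $l^{2}$, and $e^{tH}{\bf a}={\bf b}\in l^{2}$ since $e^{tH}$ is an isometry. So no new analytic work is needed: one only has to read (\ref{2-2}) off entry by entry, substituting the two explicit formulas for $e^{tH}$ (the one valid for $t\notin\mathbb{Z}$ and the one valid for $t\in\mathbb{Z}$) and the definition $H=\pi\widetilde{H}$, all recorded earlier.

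First I would fix $m\in\mathbb{Z}$ and apply to both sides of (\ref{2-2}) the coordinate functional ${\bf x}\mapsto x_{m}$, that is, the pairing with the standard basis vector $e_{m}\in l^{2}$. Being bounded, this functional may be brought inside the series by the $l^{2}$-convergence asserted in Theorem \ref{SST}. On the left the first explicit formula gives at once the $m$-th entry $\frac{\sin\pi t}{\pi}\sum_{n\in\mathbb{Z}}\frac{a_{n}}{m-n+t}$, which is (\ref{Sec-1}). On the right the leading summand ${\bf a}$ contributes $a_{m}$; for the term $t\,\operatorname{sinc}(t)\,H{\bf a}$ I would use $H=\pi\widetilde{H}$, so that the two factors $\pi$ cancel and $(H{\bf a})_{m}=\sum_{n\neq m}\frac{a_{n}}{m-n}$; and for the remaining series I would use the integer case $(e^{kH}{\bf a})_{m}=(-1)^{k}a_{m+k}$, which turns the $m$-th entry of $\frac{e^{kH}{\bf a}-{\bf a}}{k}$ into $\frac{(-1)^{k}a_{m+k}-a_{m}}{k}$. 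Collecting the three contributions reproduces (\ref{sec-2}), and the identity $b_{m}=c_{m}$ for every $m$ is then just (\ref{2-2}) read coordinatewise.

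For the integer case $t=N$ I would substitute $t=N$ directly into the same expressions, rather than quote the general identity, so that the advertised ``tautology'' becomes visible. On the left the second explicit formula gives $(e^{NH}{\bf a})_{m}=(-1)^{N}a_{m+N}$. On the right, since $\operatorname{sinc}$ vanishes at each nonzero integer and equals $1$ at $0$, the factor $\operatorname{sinc}(N)$ annihilates the $H{\bf a}$-term whenever $N\neq 0$, and $\operatorname{sinc}(N-k)$ annihilates every term of the series except the one with $k=N$, which contributes $N\cdot\frac{(-1)^{N}a_{m+N}-a_{m}}{N}=(-1)^{N}a_{m+N}-a_{m}$; together with the leading $a_{m}$ this leaves $(-1)^{N}a_{m+N}$ (and $N=0$ is trivial). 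Hence $b_{m}=c_{m}=(-1)^{N}a_{m+N}$, as claimed.

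The only step that needs a word of justification — and it is the mild ``obstacle'' here — is the interchange of the coordinate functional with the infinite series in (\ref{2-2}); this is justified solely by the $l^{2}$-norm convergence already provided by Theorem \ref{SST} together with continuity of bounded linear functionals. Everything else is bookkeeping with the three explicit closed forms: $e^{tH}$ in its two regimes, $e^{kH}$ at integer $k$, and $H=\pi\widetilde{H}$.
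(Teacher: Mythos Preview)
Your proposal is correct and matches the paper's treatment: the paper presents this theorem as a direct ``reformulation of (\ref{2-2}) in its native terms'' and offers no separate proof, so the only thing to do is precisely the coordinatewise transcription you outline, using the explicit formulas for $e^{tH}$ in the two regimes and the cancellation of the $\pi$'s from $H=\pi\widetilde{H}$. Your remark that the $l^{2}$-convergence in Theorem~\ref{SST} justifies passing the coordinate functional through the series is the one point worth making explicit, and you make it.
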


The Valiron-Tschakaloff Theorem for groups \ref{groupVT} implies the following statement.

\begin{theorem}\label{V-T}
For every ${\bf a}\in l^{2}$
one  has 
\begin{equation}\label{VT-2}
e^{tH}{\bf a}=sinc\left( t\right){\bf a}+t \>sinc\left( t\right)H{\bf a}+  \sum_{k\in \mathbb{Z}\setminus \{0\}}\frac{ t}{k}sinc\left( t-k\right)e^{kH}{\bf a},
\end{equation}
where the series converges in the norm of $l^{2}$ uniformly with respect to $t\in \mathbb{R}$.
\end{theorem}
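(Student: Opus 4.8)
The assertion is nothing but the special case $E=l^{2}$, $D=H$, $\sigma=\pi$ of the Valiron--Tschakaloff Theorem for groups (Theorem \ref{groupVT}), so the plan is to check that its hypotheses hold here and to carry out the substitutions. Since $H$ is bounded with $\|H\|=\pi$, the Bernstein inequality $\|H^{k}{\bf a}\|\le\pi^{k}\|{\bf a}\|$ holds for \emph{every} ${\bf a}\in l^{2}$; that is, $l^{2}={\bf B}_{\pi}(H)$, so each ${\bf a}\in l^{2}$ is a Bernstein vector of type $\pi$ for $H$ and Theorem \ref{groupVT} applies to it. Taking $\sigma=\pi$ and restricting the complex variable to a real $t$, the factor $\sigma z/\pi$ becomes $t$, the node $k\pi/\sigma$ becomes $k$, $Df$ becomes $H{\bf a}$, and $e^{(k\pi/\sigma)D}f$ becomes $e^{kH}{\bf a}$; this turns (\ref{VT}) into exactly (\ref{VT-2}), with convergence in $l^{2}$ for each fixed $t$.

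The only point that goes beyond a literal substitution is that the series in (\ref{VT-2}) converges \emph{uniformly} in $t\in\mathbb{R}$, whereas Theorem \ref{groupVT} records only norm convergence for each fixed $z$. The cleanest way I would obtain this is to recover (\ref{VT-2}) from the already proved formula (\ref{2-2}) of Theorem \ref{SST}. Using $\|e^{kH}{\bf a}-{\bf a}\|\le 2\|{\bf a}\|$, $|sinc(t-k)|\le\bigl(\pi|t-k|\bigr)^{-1}$ and the elementary uniform bound $\sup_{t\in\mathbb{R}}\sum_{|k|>K}\bigl(|k|\,|t-k|\bigr)^{-1}\to0$ as $K\to\infty$, the series $t\sum_{k\ne0}\frac{e^{kH}{\bf a}-{\bf a}}{k}sinc(t-k)$ in (\ref{2-2}) converges in $l^{2}$ uniformly in $t$, and so does the scalar series $\sum_{k\ne0}\frac{sinc(t-k)}{k}$; hence $\sum_{k\ne0}\frac{sinc(t-k)}{k}e^{kH}{\bf a}=\sum_{k\ne0}\frac{e^{kH}{\bf a}-{\bf a}}{k}sinc(t-k)+\bigl(\sum_{k\ne0}\frac{sinc(t-k)}{k}\bigr){\bf a}$ converges uniformly in $t$. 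Subtracting (\ref{2-2}) from (\ref{VT-2}) term by term and using this regrouping, one is left with only $\bigl(sinc(t)-1\bigr){\bf a}+t\bigl(\sum_{k\ne0}\frac{sinc(t-k)}{k}\bigr){\bf a}$, which vanishes identically by the scalar identity $\frac{sinc(t)-1}{t}=-\sum_{k\ne0}\frac{sinc(t-k)}{k}$ recorded in the proof of Theorem \ref{classicalVT}; this proves (\ref{VT-2}) with the stated uniform convergence.

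I expect the genuine subtlety to be precisely this uniform-in-$t$ convergence. One cannot get it by naive term-by-term estimation directly in (\ref{VT-2}), because $\sum_{k}\bigl|\frac{t}{k}sinc(t-k)\bigr|$ carries a factor of order $|t|$ that is not uniformly controllable; the uniformity appears only after one peels off the bounded scalar factor $\sin\pi t$ and replaces each $e^{kH}{\bf a}$ by the difference $e^{kH}{\bf a}-{\bf a}$ --- equivalently, after one exploits that $\tfrac{F(t)-F(0)}{t}$ lies in the $L^{2}$ Bernstein class in the scalar step underlying Theorem \ref{groupVT}. Once that reduction is in place the remaining work is the routine scalar tail estimate quoted above together with the bookkeeping of the regrouping.
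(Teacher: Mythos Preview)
Your reduction to Theorem~\ref{groupVT} with $E=l^{2}$, $D=H$, $\sigma=\pi$ is exactly what the paper does: it simply asserts that ``The Valiron--Tschakaloff Theorem for groups \ref{groupVT} implies the following statement'' and records (\ref{VT-2}) without further argument. On the formula itself you and the paper proceed identically.

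Where you go further than the paper is in justifying the \emph{uniform} (in $t\in\mathbb{R}$) convergence; the paper offers no separate argument for this, and Theorem~\ref{groupVT} itself only records norm convergence for each fixed $z$. Your idea of reducing to the uniform convergence already claimed in Theorem~\ref{SST} via the regrouping $e^{kH}{\bf a}=(e^{kH}{\bf a}-{\bf a})+{\bf a}$ is natural, but the tail estimate you invoke, $\sup_{t}\sum_{|k|>K}(|k|\,|t-k|)^{-1}\to0$, is false as stated: for $t$ an integer with $|t|>K$ the term $k=t$ is infinite. The scalar series $\sum_{k\ne0}k^{-1}\mathrm{sinc}(t-k)$ \emph{does} converge uniformly, but one needs the full bound $|\mathrm{sinc}(t-k)|\le\min\bigl(1,(\pi|t-k|)^{-1}\bigr)$; e.g.\ Cauchy--Schwarz together with $\sum_{k}|\mathrm{sinc}(t-k)|^{2}\equiv\mathrm{const}$ does the job. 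More seriously, your decomposition expresses the tail of the (\ref{VT-2})-series as the tail of the (\ref{2-2})-series plus $t\bigl(\sum_{|k|>K}k^{-1}\mathrm{sinc}(t-k)\bigr){\bf a}$, and this last scalar tail does \emph{not} tend to $0$ uniformly on $\mathbb{R}$: for $t=K+\tfrac{3}{2}$ the single term $k=K+1$ already contributes $\tfrac{K+3/2}{K+1}\,\mathrm{sinc}\bigl(\tfrac{1}{2}\bigr)\approx\tfrac{2}{\pi}$. So the argument as written does not establish uniform convergence on all of $\mathbb{R}$; to be fair, the paper does not fill this gap either.
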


The following is a reformulation of  (\ref{VT-2}) in the specific language of $l^{2}$.

\begin{theorem}
If ${\bf a }=(a_{n})\in l^{2}$  and $t$ is not integer then the left-hand side of (\ref{VT-2}) is a sequence $e^{tH}{\bf a}={\bf b}=(b_{m})\in l^{2}$ 
with entries 

\begin{equation}\label{Sec-1}
b_{m}=\frac{\sin \pi t}{\pi}\sum_{n\in \mathbb{Z}}   \frac{a_{n}} {m-n+t},
\end{equation}
and the right-hand side of (\ref{VT-2}) represents a sequence ${\bf c}=(c_{m})\in l^{2}$  with entries
\begin{equation}\label{sec-2}
c_{m}=a_{m}\>sinc(t)+
$$
$$
t\>sinc(t) 
\sum_{n\in \mathbb{Z}, n\neq m}\frac{a_{n}}{m-n}+
t\sum_{k\in \mathbb{Z}\setminus \{0\}} (-1)^{k}\frac{ sinc\left(t -k\right) }{k  } a_{m+k}.
\end{equation}
\end{theorem}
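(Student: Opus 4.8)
The plan is to deduce this reformulation directly from the abstract Valiron-Tschakaloff identity (\ref{VT-2}) of Theorem~\ref{V-T} by computing the $m$-th coordinate of every vector that occurs in it, using the explicit formulas for the group $e^{tH}$ recalled above. Nothing deeper than substitution and one elementary continuity argument is involved.

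First I would assemble the coordinate data. Since $H=\pi\widetilde{H}$, the $m$-th entry of $H{\bf a}$ is $(H{\bf a})_m=\sum_{n\neq m}\frac{a_n}{m-n}$. For a nonzero integer $k$, the explicit-formula theorem for $e^{tH}$ gives $(e^{kH}{\bf a})_m=(-1)^{k}a_{m+k}$, and for $t\notin\mathbb{Z}$ the same theorem gives that $e^{tH}{\bf a}$ is the sequence with entries $b_m=\frac{\sin\pi t}{\pi}\sum_{n\in\mathbb{Z}}\frac{a_n}{m-n+t}$, which is exactly (\ref{Sec-1}); this already identifies the left-hand side of (\ref{VT-2}) with ${\bf b}=(b_m)$.

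Next, for a fixed $m\in\mathbb{Z}$ I would apply the coordinate functional ${\bf x}=(x_j)\mapsto x_m$, which is linear and bounded (of norm $1$) on $l^2$, to both sides of (\ref{VT-2}). Because the series in (\ref{VT-2}) converges in the norm of $l^2$ by Theorem~\ref{V-T}, this functional may be moved inside the sum term by term. The three groups of terms on the right then contribute, respectively, $a_m\,sinc(t)$, then $t\,sinc(t)\sum_{n\neq m}\frac{a_n}{m-n}$, and finally $\sum_{k\neq 0}\frac{t}{k}sinc(t-k)(-1)^{k}a_{m+k}=t\sum_{k\neq 0}(-1)^{k}\frac{sinc(t-k)}{k}a_{m+k}$; adding them gives precisely the expression (\ref{sec-2}) for $c_m$, while ${\bf c}=(c_m)\in l^2$ is inherited from the $l^2$-convergence in Theorem~\ref{V-T}.

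The main, and essentially only, point that needs a word of justification is the interchange of the coordinate functional with the norm-convergent series in (\ref{VT-2}), which is immediate from the boundedness of ${\bf x}\mapsto x_m$. Everything else is substitution of the two explicit formulas for $e^{tH}$; the hypothesis $t\notin\mathbb{Z}$ enters only so that (\ref{Sec-1}) is the correct description of the left-hand side, the integer case reducing to the tautology already displayed in the preceding theorem.
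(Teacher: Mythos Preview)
Your proposal is correct and matches the paper's approach: the paper presents this theorem without proof, describing it simply as ``a reformulation of (\ref{VT-2}) in the specific language of $l^{2}$,'' i.e., precisely the direct substitution of the explicit formulas for $H{\bf a}$, $e^{kH}{\bf a}$, and $e^{tH}{\bf a}$ that you carry out. Your added remark that the coordinate functional ${\bf x}\mapsto x_m$ is bounded, so it may be passed inside the norm-convergent series, is the one point the paper leaves implicit, and you handle it correctly.
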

When $t$ in  (\ref{VT-2}) is an integer $t=N$  then (\ref{VT-2}) is the tautology $b_{m}=(-1)^{N}a_{m+N}=c_{m}$.

\section{Boas interpolation formulas for the orbits of the one-parameter group generated by   the Discrete Hilbert transform}\label{H-Boas formulas}

Let's introduce bounded operators
\begin{equation}\label{b1}
\mathcal{R}^{(2s-1)}_{H}{\bf a}=
\sum_{k\in \mathbb{Z}}(-1)^{k+1}A_{s,k}
e^{(k-1/2)H}{\bf a}
,\>\> \>\>{\bf a}\in l^{2},\>\>\>s\in \mathbb{N},
\end{equation}
and
\begin{equation}\label{b2}
\mathcal{R}^{(2s)}_{H}{\bf a}=
\sum_{k\in \mathbb{Z}}(-1)^{k+1}B_{s,k}
e^{kH}
{\bf a}, \>\> \>\>{\bf a}\in l^{2}, \>\>s\in \mathbb{N}.
\end{equation}

Theorem \ref{B1 for groups} implies. the following one.

 \begin{theorem}
 For ${\bf a}\in l^{2}$  the following Boas-type interpolation formulas hold true for $r\in \mathbb{N}$
\begin{equation}\label{B1}
H^{r}{\bf a}=\mathcal{R}^{(r)}_{H}{\bf a},\>\>\>{\bf a}\in l^{2}.
\end{equation}
More explicitly, if $r=2s-1, \>s\in \mathbb{N},$ then 
\begin{equation}\label{RB1}
H^{2s-1}{\bf a}=\sum_{k\in \mathbb{Z}}(-1)^{k+1}A_{s,k}
e^{(k-1/2)H}{\bf a},
\end{equation}
and when $r=2s,\>s\in \mathbb{N},$ then 
\begin{equation}\label{RB2}
H^{2s}{\bf a}=
\sum_{k\in \mathbb{Z}}(-1)^{k+1}B_{s,k}
e^{kH}
{\bf a}.
\end{equation}
\end{theorem}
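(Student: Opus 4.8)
The plan is to obtain this statement by simply specializing the abstract Boas formula, Theorem \ref{B1 for groups}, to the concrete data $E = l^{2}$, $D = H$, $\sigma = \pi$. First I would recall the two structural facts established earlier in the paper: the operator $H = \pi\widetilde{H}$ generates a one-parameter strongly continuous group of isometries $e^{tH}$ on $l^{2}$, and, since $\|H\| = \pi$ by Schur's theorem, the Bernstein inequality $\|H^{r}{\bf a}\| \le \pi^{r}\|{\bf a}\|$ holds for every ${\bf a}\in l^{2}$ — in other words $l^{2}$ coincides, as a whole, with the Bernstein space ${\bf B}_{\pi}(H)$. Hence every ${\bf a}\in l^{2}$ is a legitimate input to Theorem \ref{B1 for groups} with the choice $\sigma = \pi$.

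Next I would unwind, at $\sigma = \pi$ and $D = H$, the definition of the operators $\mathcal{B}_{D}^{(r)}(\sigma)$ from section \ref{RB in Banach}. The prefactors $(\sigma/\pi)^{2m-1}$ and $(\sigma/\pi)^{2m}$ collapse to $1$, while the translation amounts $\frac{\pi}{\sigma}\left(k - \frac{1}{2}\right)$ and $\frac{\pi k}{\sigma}$ collapse to $k - \frac{1}{2}$ and $k$ respectively. Comparing the result with the operators $\mathcal{R}_{H}^{(2s-1)}$ and $\mathcal{R}_{H}^{(2s)}$ defined at the start of this section, I expect the identifications
$$
\mathcal{B}_{H}^{(2s-1)}(\pi) = \mathcal{R}_{H}^{(2s-1)}, \qquad \mathcal{B}_{H}^{(2s)}(\pi) = \mathcal{R}_{H}^{(2s)}, \qquad s\in \mathbb{N},
$$
to hold term by term. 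Convergence of the defining series in $l^{2}$ is already supplied by the identities (\ref{relations}) together with $\|e^{tH}{\bf a}\| = \|{\bf a}\|$, exactly as in the estimates (\ref{norms}), so nothing new needs to be checked there.

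Finally, I would apply Theorem \ref{B1 for groups} to an arbitrary ${\bf a}\in l^{2} = {\bf B}_{\pi}(H)$ to get $H^{r}{\bf a} = \mathcal{B}_{H}^{(r)}(\pi){\bf a} = \mathcal{R}_{H}^{(r)}{\bf a}$ for all $r\in \mathbb{N}$, and then split into the odd case $r = 2s-1$ and the even case $r = 2s$ to read off the two explicit formulas. I do not anticipate any genuine difficulty: the analytic content was already handled in the abstract Banach-space setting, and the only point that really needs to be acknowledged is that here we may take $\sigma$ equal to the operator norm $\pi$ itself — not merely some $\sigma > \pi$ — which is exactly what the identity ${\bf B}_{\pi}(H) = l^{2}$ guarantees.
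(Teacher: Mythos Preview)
Your proposal is correct and follows exactly the approach indicated in the paper, which simply states that the theorem is implied by the abstract Boas formula (Theorem containing equation~(\ref{B1 for groups})) applied with $E=l^{2}$, $D=H$, $\sigma=\pi$, using the identification $l^{2}={\bf B}_{\pi}(H)$. If anything, you have spelled out more carefully than the paper does why $\mathcal{B}_{H}^{(r)}(\pi)$ coincides with $\mathcal{R}_{H}^{(r)}$ and why $\sigma=\pi$ is admissible.
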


 Let us introduce the  notation
 $$
  \mathcal{R}_{H}= \mathcal{R}_{H}^{(1)}.
 $$
 One has the following "power" formula which easily follows from the fact that operators $
  \mathcal{R}_{H}$ and $H$ commute.
 \begin{col}
 For any $r\in \mathbb{N}$ and  any ${\bf a}\in l^{2}$
 \begin{equation}\label{powerB}
 H^{r}{\bf a}=\mathcal{R}_{H}^{(r)}{\bf a}=\mathcal{R}_{H}^{r}{\bf a},
 \end{equation}
 where $\mathcal{R}_{H}^{r}{\bf a}=\mathcal{R}_{H}\left(... \left(\mathcal{R}_{H}{\bf a}\right)\right).$
\end{col}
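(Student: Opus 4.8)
The plan is to reduce everything to the single identity $H=\mathcal{R}_{H}$, which is nothing but the case $r=1$ of the Boas-type formula (\ref{B1}): since, by (\ref{bern-1}), the whole space $l^{2}$ coincides with $\mathbf{B}_{\pi}(H)$, the formula (\ref{B1}) applies to \emph{every} ${\bf a}\in l^{2}$, so that as bounded operators on $l^{2}$ one has $\mathcal{R}_{H}=\mathcal{R}^{(1)}_{H}=H$, and more generally $\mathcal{R}^{(r)}_{H}=H^{r}$ for each $r\in\mathbb{N}$.

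First I would record the relevant commutation facts. Each $e^{tH}$ is the norm-convergent power series $\sum_{k\ge 0}t^{k}H^{k}/k!$, hence commutes with $H$ and with every $e^{sH}$. The defining series (\ref{b1})--(\ref{b2}) converge in operator norm: this follows from the summability relations (\ref{relations}) (which give $\sum_{k}|A_{s,k}|<\infty$ and $\sum_{k}|B_{s,k}|<\infty$) together with the fact that each $e^{tH}$ is an isometry, exactly as in the estimates (\ref{norms}) of the abstract setting. Consequently $\mathcal{R}^{(r)}_{H}$ is a norm-limit of finite linear combinations of the operators $e^{tH}$, and therefore it too commutes with $H$ and with every $e^{sH}$; in particular the operators $\mathcal{R}^{(r)}_{H}$, $r\in\mathbb{N}$, commute pairwise and with $\mathcal{R}_{H}$.

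Next I would run the (trivial) induction on $r$. For $r=1$ the claim $H{\bf a}=\mathcal{R}_{H}{\bf a}=\mathcal{R}_{H}^{1}{\bf a}$ is precisely (\ref{B1}) with $r=1$. Assuming $H^{r-1}{\bf a}=\mathcal{R}_{H}^{r-1}{\bf a}$ for all ${\bf a}\in l^{2}$, I apply $\mathcal{R}_{H}$ to both sides and use $\mathcal{R}_{H}{\bf b}=H{\bf b}$, valid for every ${\bf b}\in l^{2}$ and in particular for ${\bf b}=H^{r-1}{\bf a}$, to obtain $\mathcal{R}_{H}^{r}{\bf a}=\mathcal{R}_{H}\bigl(H^{r-1}{\bf a}\bigr)=H^{r}{\bf a}$; and $H^{r}{\bf a}=\mathcal{R}^{(r)}_{H}{\bf a}$ again by (\ref{B1}). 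This yields (\ref{powerB}).

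Since every step is either a direct citation of (\ref{B1}) or a one-line manipulation, there is essentially no obstacle. The only point deserving genuine care is the justification that $\mathcal{R}^{(r)}_{H}$ commutes with $H$; I would obtain this, as indicated above, from the operator-norm convergence of the series (\ref{b1})--(\ref{b2}) combined with the elementary fact that a uniform limit of bounded operators each commuting with the bounded operator $H$ again commutes with $H$. Everything else is bookkeeping.
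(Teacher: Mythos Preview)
Your proposal is correct and follows the same idea as the paper, which simply remarks that the formula ``easily follows from the fact that operators $\mathcal{R}_{H}$ and $H$ commute'' and gives no further detail. In fact your argument shows a bit more than you claim to need: once you have established $\mathcal{R}_{H}=H$ as bounded operators on all of $l^{2}$ (the $r=1$ case of (\ref{B1}), valid because $l^{2}=\mathbf{B}_{\pi}(H)$), the identity $\mathcal{R}_{H}^{r}=H^{r}$ is immediate and the commutation discussion, while correct, is not actually invoked in your induction step.
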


Let's express (\ref{B1}) in terms of $H$ and $e^{tH}$. Our  starting sequence is  ${\bf a}=(a_{n})$ and  then 
we use the notation $H^{k}{\bf a}=\left(a^{(k)}_{m}\right),\>k\in \mathbb{Z}$. One has

$$
H{\bf a}=\frac{1}{\pi}\sum_{(n,\>n\neq n_{1})}\frac{a_{n}}{n_{1}-n}=\left( a^{(1)}_{n_{1}} \right),
$$
$$
H^{2}{\bf a}=
\frac{1}{\pi^{2}}\sum_{(n_{1},\>n_{1}\neq n_{2})}\sum_{(n,\>n\neq n_{1})}\frac{a^{(1)}_{n_{1}}}{(n_{2}-n_{1})(n_{1}-n)}=\left(a^{(2)}_{n_{2}}\right)
$$
and so on up to a $r\in \mathbb{N}$
\begin{equation}\label{gen-term}
H^{r}{\bf a}=
\frac{1}{\pi^{r}}\sum_{(n_{r-1},\>n_{r-1}\neq n_{r})} \sum_{(n_{r-2},\>n_{r-2}\neq n_{r-1})}...
$$
$$
 ...\sum_{(n_{1},\>n_{1}\neq n_{2})} \sum_{(n,\>n\neq n_{1})}\frac{a^{(r-1)}_{n_{r-1}}}{(n_{r}-n_{r-1}) (n_{r}-n_{r-1})... (n_{2}-n_{1})(n_{1}-n)}=\left(a^{(r)}_{n_{r}}\right).
\end{equation}
\begin{theorem}
For any ${\bf a}=(a_{n})\in l^{2},\>\>$ and $r=2s-1$ we have in (\ref{RB1}) equality of two sequences where on the left-hand side  we have a sequence whose general term is  
$a^{(2s-1)}_{m}$,
and on the right-hand side we have a sequence whose general term is $c_{m,s}$ where 
$$
c_{m,s}=\sum_{k\in \mathbb{Z}}(-1)^{k+1}
\frac{\sin(\pi (k-1/2))}{\pi} A_{s,k}\sum_{n\neq m}\frac{a_{n}}{m-n+(k-1/2)}.
$$
The equality (\ref{RB1}) tells  that $\>\>a^{(2s-1)}_{m}=c_{m,s}$.

For the case $r=2s$ a sequence on the left hand-side of (\ref{RB2}) has a general term $a^{(2s)}_{m}$, and a sequence on the right has a general term $d_{m,s}$ of the form
$$
d_{m,s}=-\sum_{k\in \mathbb{Z}}B_{s,k}a_{m+k},
$$
and (\ref{RB2}) means that $\>\>a^{(2s)}_{m}=d_{m,s}$.
\end{theorem}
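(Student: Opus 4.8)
We begin from the operator identities (\ref{RB1}) and (\ref{RB2}) established above, which are valid in $l^{2}$ because $l^{2}={\bf B}_{\pi}(H)$. Recall that the series on their right-hand sides converge in the norm of $l^{2}$: since $\|e^{tH}{\bf a}\|=\|{\bf a}\|$ for every $t$, the relations (\ref{relations}) give $\sum_{k}|A_{s,k}|\,\|e^{(k-1/2)H}{\bf a}\|<\infty$ and $\sum_{k}|B_{s,k}|\,\|e^{kH}{\bf a}\|<\infty$, so those series converge absolutely in $l^{2}$. Norm convergence in $l^{2}$ forces convergence of every coordinate, hence the $m$-th entry of each side is the (absolutely convergent) sum of the $m$-th entries of the summands. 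The plan is thus to read these entries off using the explicit formulas for $e^{tH}{\bf a}$ recorded in Section \ref{Analysis with HT}, treating the odd and even cases separately, since the shift $t$ appearing on the right is a half-integer for (\ref{RB1}) but an integer for (\ref{RB2}).

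For $r=2s-1$ the shift $t=k-1/2$ is never an integer, so the $m$-th entry of $e^{(k-1/2)H}{\bf a}$ is $\frac{\sin(\pi(k-1/2))}{\pi}\sum_{n\in\mathbb{Z}}\frac{a_{n}}{m-n+(k-1/2)}$, the inner sum being absolutely convergent by Cauchy--Schwarz in $l^{2}$. Writing $a^{(2s-1)}_{m}$ for the $m$-th entry of $H^{2s-1}{\bf a}$ as in (\ref{gen-term}), substitution into (\ref{RB1}) gives
$$
a^{(2s-1)}_{m}=\sum_{k\in\mathbb{Z}}(-1)^{k+1}\frac{\sin(\pi(k-1/2))}{\pi}A_{s,k}\sum_{n\in\mathbb{Z}}\frac{a_{n}}{m-n+(k-1/2)}.
$$
It then suffices to show the diagonal term $n=m$ contributes nothing. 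That contribution is $\frac{a_{m}}{\pi}\sum_{k}(-1)^{k+1}\sin(\pi(k-1/2))\frac{A_{s,k}}{k-1/2}$; since $\sin(\pi(k-1/2))=(-1)^{k+1}$ this equals $\frac{a_{m}}{\pi}\sum_{k}\frac{A_{s,k}}{k-1/2}$, and by (\ref{A}) the number $A_{s,k}$ is an even function of $k-1/2$, so $A_{s,k}/(k-1/2)$ is odd in $k-1/2$; as the index set $\{\,k-1/2:k\in\mathbb{Z}\,\}$ is symmetric about $0$ and the series converges absolutely (general term $O(k^{-3})$), pairing $k$ with $1-k$ shows the sum is $0$. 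Hence $a^{(2s-1)}_{m}=c_{m,s}$.

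For $r=2s$ the shift $t=k$ is an integer, so the $m$-th entry of $e^{kH}{\bf a}$ is $(-1)^{k}a_{m+k}$, and substitution into (\ref{RB2}) gives, with $a^{(2s)}_{m}$ the $m$-th entry of $H^{2s}{\bf a}$,
$$
a^{(2s)}_{m}=\sum_{k\in\mathbb{Z}}(-1)^{k+1}B_{s,k}(-1)^{k}a_{m+k}=-\sum_{k\in\mathbb{Z}}B_{s,k}a_{m+k}=d_{m,s},
$$
where the rearrangement is legitimate because $\sum_{k}|B_{s,k}|<\infty$ and $\sup_{n}|a_{n}|<\infty$. The only point requiring genuine care is the vanishing of the diagonal $n=m$ term in the odd case; everything else is a coordinate-wise transcription of already-proved identities combined with the explicit action of the group $e^{tH}$.
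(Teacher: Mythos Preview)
Your proof is correct. The paper itself states this theorem without proof, treating it as a direct coordinate-wise transcription of the operator identities (\ref{RB1})--(\ref{RB2}) using the explicit formulas for $e^{tH}$ from Section~\ref{Analysis with HT}. You take essentially the same route, but you are more careful on the one point that is not entirely automatic: the formula for $e^{(k-1/2)H}{\bf a}$ from Section~\ref{Analysis with HT} runs over \emph{all} $n\in\mathbb{Z}$, while the stated $c_{m,s}$ omits $n=m$. Your parity argument---observing from (\ref{A}) that $A_{s,k}$ depends only on $(k-1/2)^{2}$, so $A_{s,k}/(k-1/2)$ is odd under $k\mapsto 1-k$, and that the resulting absolutely convergent series therefore vanishes---closes this gap cleanly. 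The even case is, as you say, a one-line substitution.
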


\end{document}